\date{}
\newcommand{\CC}{\mathbb{C}}  
\newcommand{\NN}{\mathbb{N}}  
\newcommand{\RR}{\mathbb{R}}  
\theoremstyle{plain}
\newtheorem{definition}{Definition}
\newtheorem*{gracies}{Acknowledgements}
\newtheorem{theorem}{Theorem}
\newtheorem{proposition}{Proposition}
\newtheorem{lemma}{Lemma}
\newtheorem{remark}{Remark}
\newtheorem*{remarks}{Remarks}
\newtheorem{coro}{Corollary}
  \title[]{On the V-states for the generalized quasi-geostrophic equations}
\author[]{ZINEB HASSAINIA}
\address{IRMAR, Universit\'e de Rennes 1 \\ Campus de Beaulieu \\  35 042 Rennes cedex, France}
 \email{zineb.hassainia@univ-rennes1.fr}
 \author[T. Hmidi]{Taoufik Hmidi}
\address{IRMAR, Universit\'e de Rennes 1\\ Campus de
Beaulieu\\ 35~042 Rennes cedex\\ France}
\email{thmidi@univ-rennes1.fr}
\begin{document}
\subjclass[2000]{35Q35, 76B03, 76C05}
\keywords{ 2D inviscid SQG, rotating patches, bifurcation theory}
\begin{abstract}
We prove the existence of the V-states for the generalized inviscid SQG equations with $\alpha\in ]0,1[.$ These structures  are special  rotating simply connected patches with $m-$ fold symmetry bifurcating from the trivial solution at some explicit values of the angular velocity.  This produces, inter alia,  an infinite family of non stationary  global solutions with uniqueness. 
\end{abstract}

\newpage 

\maketitle{}
\tableofcontents

\section{Introduction}
In this paper we  shall investigate  some special structures of the vortical motions  for the   generalized inviscid  surface quasi-geostrophic  equation arising in fluid dynamics.  This model describes the evolution  of the potential temperature $\theta$ by the transport  equation,
\begin{equation}\label{E}
\left\{ \begin{array}{ll}
\partial_{t}\theta+u\cdot\nabla\theta=0,\quad(t,x)\in\RR_+\times\RR^2, &\\
u=-\nabla^\perp(-\Delta)^{-1+\frac{\alpha}{2}}\theta,\\
\theta_{|t=0}=\theta_0.
\end{array} \right.
\end{equation}
Here $u$ refers to the velocity field, $\nabla^\perp=(-\partial_2,\partial_1)$  and $\alpha$ is a real parameter taken in the interval  $[0,1[$. 
The operator  $(-\Delta)^{-1+\frac{\alpha}{2}}$ is of convolution type and is defined by 
\begin{equation}\label{Integ1}
(-\Delta)^{-1+\frac{\alpha}{2}} \theta(x)=\frac{C_\alpha}{2\pi}{\int}_{\RR^2}\frac{\theta(y)}{\vert x-y\vert^\alpha}dy
\end{equation}
with  $C_\alpha=\frac{\Gamma(\alpha/2)}{2^{1-\alpha}\Gamma(\frac{2-\alpha}{2})}$. This model was proposed by C\'ordoba { et al.} in \cite{C-F-M-R} as an interpolation between Euler equations and the surface quasi-geostrophic model, hereafter denoted by SQG,  corresponding to $\alpha=0$ and $\alpha=1$, respectively. The SQG equation was used  by Juckes \cite{Juk} and Held {et al.} \cite{Held} as a concise model of   the atmosphere  circulation  near the tropopause. It  was also developed  by Lapeyre and Klein \cite{Lap} to describe the  ocean dynamics in  the upper layers. We note  that there is a strong  mathematical and physical analogy with the three-dimensional incompressible Euler equations, and it can be viewed as a simplified model for that system; see \cite{C-M-T} for details. 

  The local well-posedness of classical solutions can be performed in various function spaces. For instance, this was implemented  in the framework of Sobolev space  \cite{C-C-C-G-W} by using the commutator theory. However, it is so delicate to extend  the  Yudovich theory of weak solutions known for the two-dimensional Euler equations \cite{Y1} to  the case $\alpha>0$ because the velocity is in general  below the Lipschitz class. Nonetheless, one can say more about this issue for some special class of concentrated vortices. More precisely,  when the initial datum has a vortex patch structure, that is, $\theta_0(x)=\chi_D$ is  the characteristic function of a bounded simply connected smooth domain $D$, then there is a unique local solution in the patch form $\theta(t)=\chi_{D_t}. $ In this case,  the boundary   motion of the domain $D_t$  is described by the contour dynamics formulation; see the papers \cite{G,R}.  The global persistence of the boundary regularity is only known for $\alpha=0$  according to the result of  Chemin \cite{Ch}. For $\alpha>0$ there are some numerical simulations showing the singularity formation  in finite  time, see for instance \cite{C-F-M-R}. 
  
The technique of contour dynamics was originally devised by Zabusky et al. \cite{Zab1} and has found many applications in the study of two-dimensional flows. We shall use  this technique to track the boundary motion of the patch for the generalized SQG equation.  According to Green formula one can recover the velocity   from the boundary through the formula,
\begin{equation}\label{veltu}
u(t,x)=\frac{C_\alpha}{2\pi}{\int}_{\partial D_t}\frac{1}{\vert x-\xi\vert^\alpha}d\xi
\end{equation}
where $d\xi$ denotes the complex integration over the positively oriented curve $\partial D_t$. To write down the equation of the boundary, one can use for instance the Lagrangian \mbox{parametrization $\gamma_t:[0,2\pi]\to \CC$,} given by the nonlinear ode,
\begin{equation*}\label{CD}
\left\{ \begin{array}{ll}
\partial_t\gamma(t,\sigma)=u(t,\gamma(t,\sigma)),&\\
\gamma(0,\sigma)=\gamma_0(\sigma)
\end{array} \right.
\end{equation*}
where $\gamma_0$ is a periodic smooth parametrization of the initial boundary and consequently  the contour dynamics equation becomes
\begin{equation}\label{veloc0}
\partial_t\gamma(t,\sigma)=\frac{C_\alpha}{2\pi}{\int}_0^{2\pi}\frac{\partial_s\gamma(t,s)}{\vert \gamma(t,\sigma)-\gamma(t,s)\vert^\alpha}ds.
\end{equation}

The main objective  of this paper is to focus on some special vortices, called V-states  or  rotating patches, whose dynamics is described by a rigid body transformation. The  problem  consists in finding some domains $D$ subject to a uniform rotation around their  centers of mass. In which  case  the support of the patch  $D_t$ does not change its shape and is given by $D_t= {\bf R}_{x_0,\Omega t}D$, where $ {\bf R}_{x_0,\Omega t}$  stands for  the planar rotation with center $x_0$ and angle $\Omega t.$ The parameter $\Omega$ is called the angular velocity of the rotating domain. 

This problem was investigated first  for the two-dimensional  Euler equations ($\alpha=0$) a long time  ago  and still a subject of intensive research combining analytical and numerical studies. It is worthy noting that  explicit non trivial  rotating patches are known in the literature and  goes back to  Kirchhoff \cite{Kirc} who discovered that an ellipse of semi-axes $a$ and $b$  is subject to a perpetual rotation with uniform angular velocity $\Omega = ab/(a+b)^2$; see for
instance \mbox{\cite[p. 304]{BM}} and \cite [p. 232]{L}. In the seventies of the last century, Deem and Zabusky \cite{DZ} wrote an
equation for the V-states and gave  partial numerical solutions. They put in evidence the existence of the 
V-states with $m$-fold symmetry for each integer $m \geq 2$ and in this countable cascade the case $m=2$ corresponds to the known  Kirchhoff's ellipses. Recall that a 
domain is said  $m$-fold symmetric if it has the same group invariance of a regular polygon with $m$ sides. This means that the domain is invariant by the action of the dihedral group $\textnormal{D}_m$.  At each frequency $m$ these V-states can be seen as a continuous deformation of the disc with respect to a hidden  bifurcation parameter corresponding to the angular velocity. An analytical  proof was given by 
Burbea in \cite{B} and his approach consists in writing the problem with the conformal mapping of the domain and to look at the  non trivial solutions  by using the technique of the bifurcation theory. Actually, Burbea's proof is not completely rigorous and one can find a complete one  \mbox{in \cite{HMV}.} In this latter  paper Burbea's approach was revisited  with  more details and explanations. We also studied  the boundary regularity of the V-states and  showed  that they are of class   $C^\infty$ and convex  close to the disc. 

The formulation of the rotating patches can be done in several ways requiring different levels of regularity for the solution. We shall give  here a short glimpse with an emphasis on two different approaches. The first one  uses the elliptic equation governing the stream function $\psi$ associated to the domain $D$ of the initial patch. As to the second approach,  it uses the conformal  parametrization of the boundary combined with the contour dynamics formulation.  To be more precise, recall that the function $\psi$ is defined by the Newtonian potential through the  formula,
$$
\psi(x)=\frac{1}{2\pi}\int_{D}\log|x-y|\, dy,\quad \Delta \psi= \chi_{D}.
$$ 
Note that a  patch with a smooth boundary rotates uniformly around its center, which can be  taken equal to zero, means that in its own  frame the boundary  is stationary. In other words,  the relative stream function $x\mapsto \psi(x)-\frac12\Omega|x|^2$ should be  constant on the boundary and therefore we get the equation
\begin{equation}\label{Ell1}
\frac{1}{2\pi}\int_{D}\log|x-y|\,dy-\frac12\Omega|x|^2=\mu,\quad\forall x\in \partial D,
\end{equation} 
with $\mu$  a constant. By virtue of  this equation, the domains $D$ are in fact defined through  a strong  interaction between the Newtonian and the quadratic potentials. The issue depends heavily on the sign of $\Omega$. To fix the terminology, we say that the potential is repulsive when $\Omega\le 0$  and attractive when it has an  opposite sign. It seems that the situation in the repulsive case $\Omega\le0$ is trivial in the sense that only the discs are solutions of the rotating patch problem. This means that all the V-states must rotate counterclockwise. This result is the subject of a work in progress  by the second author \cite{Hmidi}. The proof relies on  the moving plane method which allows to show that any solution of \eqref{Ell1} must be radial with respect to some specific point, which is the center of mass of the domain $D$,  and  is strictly monotone. In the attractive case $\Omega>0$, the interaction between the potentials is more fruitful and leads to infinite  nontrivial solutions called the V-states as we have already mentioned. We point out that   Burbea shows that  for each frequency $m\geq2$ the V-states $V_m$ can be assimilated to  a bifurcating curve from the disc at the angular velocity $\Omega_m=\frac{m-1}{2m}$. His idea is to use the conformal mapping parametrization $\phi:\mathbb{D}^c \to D^c$ which satisfies  the nonlinear integral equation
\begin{eqnarray}\label{vortexd1}
\nonumber F\big(\Omega, \phi(w)\big)&\triangleq&\textnormal{Im}\bigg\{\Big((1-2\Omega)\overline{\phi(w)}-\frac{1}{2i\pi}\mathop{{\int}}_\mathbb{T}\frac{\overline{\phi(\tau)-\phi(w)}}{\phi(\tau)-\phi(w)}{\phi^\prime(\tau)d\tau}\Big){w}\,{{\phi^\prime}(w)}\bigg\}\\
&=&0,\quad \forall w\in \mathbb{T},
\end{eqnarray}
where  $\mathbb{D}$ denotes the open unit disc and $\mathbb{T}$ its boundary. Now we observe that $F(\Omega,\hbox{Id})=0$ and thus we may try to find non trivial solutions by using the bifurcation theory. For this end Burbea computes  the linearized operator of $F$ around this solution  and shows that it has a nontrivial kernel if and only if $\Omega\in \{\Omega_m, m\geq2\}$. In  this case $\partial_f F(\Omega, \hbox{Id})$ is a Fredholm operator with  one-dimensional kernel. Consequently, one may  apply the bifurcation theory through for  instance Crandall-Rabinowitz theorem. This allows   to prove the existence of non trivial branch of solutions emerging from the trivial one at each frequency  level $\Omega_m$.

One cannot escape mentioning that other explicit vortex solutions are discovered in the literature for the incompressible Euler equations in the presence of an  external shear flow; see for instance \cite{Chapl,Kida,Neu}. A general review about vortex dynamics can be found in the papers  \cite{A,New}. Another closely related subject is to conduct a similar study for the patches with multiple interfaces which is inherently complicated due to the strong interaction between the interfaces. In this context,  Flierl and Polvani \cite{Flierl}  proved that confocal ellipses with some compatibility relations rotate as a rigid body motion. Recently, we developed a complete characterization of rotating patches with two interfaces provided one of them is prescribed in the ellipses class.

In this paper, we shall address the same problem for the generalized SQG equations and look for the existence of the V-states The question was raised by Diego C\'ordoba and was the initial motivation for this work. As we shall see later in Proposition \ref{prop-bound}, the equation \eqref{vortexd1}  becomes  
\begin{equation*}
F_\alpha\big(\Omega, \phi(w)\big)\triangleq\textnormal{Im}\bigg\{\Big(\Omega\phi(w)-\frac{C_\alpha}{2i\pi}\mathop{{\int}}_\mathbb{T}\frac{\phi^\prime(\tau)}{\vert \phi(w)-\phi(\tau)\vert^\alpha}d\tau\Big)\overline{w}\,{\overline{\phi^\prime}(w)}\bigg\}=0,\quad \forall w\in \mathbb{T},
\end{equation*}
with $\displaystyle{C_\alpha=\frac{\Gamma(\frac\alpha2)}{2^{1-\alpha}\Gamma(\frac{2-\alpha}{2})}}$. Note that the  structure of the singular  nonlinear part is different \mbox{from \eqref{vortexd1}.} Indeed, the singular kernel is not algebraic with respect to the conformal mapping which is holomorphic outside the unit disc. This property is profoundly important for Euler equations because it  yields  at different levels of the analysis, especially in the spectral study, to simple computations through Residue Theorem.  Another disadvantage of the kernel structure concerns the computations of the regularity of the functional $F_\alpha$ which are  heavy and more involved.

 The main contribution of this paper is to give  a positive answer for the existence of the V-states when $\alpha\in ]0,1[.$  For the sake of clarity we shall now give an elementary statement   and a complete  one is postponed to Theorem \ref{thmV2}.
 \begin{theorem}\label{thmV1}
Let $\alpha\in]0,1[$ and $m\in \NN^\star\backslash\{1\}$. Then, there exists a family of  m-fold symmetric  V-states $(V_m)_{m\geq2}$ for the equation \eqref{E}. Moreover,  for each $m\geq2$ the curve $V_m$ bifurcates from the trivial solution $\theta_0=\chi_{\mathbb{D}}$ at the angular velocity
$$
\Omega_m^\alpha\triangleq\frac{\Gamma(1-\alpha)}{2^{1-\alpha}\Gamma^2(1-\frac\alpha2)}\bigg(\frac{\Gamma(1+\frac\alpha2)}{\Gamma(2-\frac\alpha2)}-\frac{\Gamma(m+\frac\alpha2)}{\Gamma(m+1-\frac\alpha2)}\bigg),
$$
where $\Gamma$ denotes the gamma function.

In addition, the boundary of the  V-states belongs to the  H\"{o}lder class $C^{2-\alpha}.$
\end{theorem}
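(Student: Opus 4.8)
The plan is to follow the bifurcation scheme described above for the Euler case, now applied to the functional $F_\alpha$ of Proposition~\ref{prop-bound}, the one genuinely new obstacle being that the singular kernel $|\phi(w)-\phi(\tau)|^{-\alpha}$ is no longer algebraic in the holomorphic conformal map, so the Residue-theorem shortcuts of \cite{HMV} are unavailable. First I would fix the functional framework: look for $m$-fold symmetric conformal maps of the exterior disc of the form $\phi(w)=w\big(1+\sum_{n\geq1}a_n w^{-nm}\big)$ --- for which $D=\big(\phi(\mathbb{D}^c)\big)^c$ automatically has its centre of mass at the origin --- inside a small ball $V$ around $\phi=\mathrm{Id}$ in a Banach space $X$ modelled on the H\"older class $C^{2-\alpha}(\mathbb{T})$, with the $m$-fold symmetry and the reality of the Taylor coefficients built in. One then checks that $F_\alpha$ sends $\RR\times V$ into a space $Y$ of odd, $m$-fold symmetric traces modelled on $C^{1-\alpha}(\mathbb{T})$, i.e. sine series in the frequencies $\{nm\}_{n\geq1}$, and that $F_\alpha(\Omega,\mathrm{Id})=0$ for every $\Omega$, so the disc is a trivial branch of solutions.

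The main work --- and I expect this to be the principal difficulty, exactly as the authors flag --- is to prove that $F_\alpha:\RR\times V\to Y$ is well defined and of class $C^1$. The idea is to isolate the singularity by writing
\[
|\phi(w)-\phi(\tau)|^{\alpha}=|w-\tau|^{\alpha}\,\left|\frac{\phi(w)-\phi(\tau)}{w-\tau}\right|^{\alpha},
\]
whose second factor is, by univalence of $\phi$, smooth in $(w,\tau)$ and bounded below away from $0$; hence $N(\phi)(w)\triangleq\frac{C_\alpha}{2i\pi}\int_\mathbb{T}\frac{\phi'(\tau)}{|\phi(w)-\phi(\tau)|^{\alpha}}\,d\tau$ is a weakly singular integral operator, with an integrable kernel of order $|w-\tau|^{-\alpha}$ modulated by a smooth symbol. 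Classical mapping properties of such kernels on H\"older spaces, combined with product and composition estimates for the modulating factor, give the boundedness $N:V\to C^{1-\alpha}(\mathbb{T})$; differentiating the kernel with respect to $\phi$ again produces only integrable singularities, which yields the $C^1$ (indeed $C^\infty$, with more care) dependence. Together with the trivial smoothness of the linear term $\Omega\phi(w)$ this gives $F_\alpha\in C^1(\RR\times V;Y)$.

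Next I would carry out the spectral study of $\mathcal L:=\partial_\phi F_\alpha(\Omega,\mathrm{Id})$. Linearising along $h(w)=\sum_{n\geq1}h_n w^{-(nm-1)}$ and evaluating on $w=e^{i\theta}$, one finds that $\mathcal L$ acts as a Fourier multiplier,
\[
\mathcal L h=\sum_{n\geq1}\big(\Omega-\lambda_{nm}\big)\,h_n\,\sin(nm\theta),\qquad \lambda_k=\frac{\Gamma(1-\alpha)}{2^{1-\alpha}\Gamma^2(1-\frac\alpha2)}\Big(\frac{\Gamma(1+\frac\alpha2)}{\Gamma(2-\frac\alpha2)}-\frac{\Gamma(k+\frac\alpha2)}{\Gamma(k+1-\frac\alpha2)}\Big),
\]
up to an innocuous common positive prefactor, where the Gamma ratios emerge from the classical evaluation of $\int_0^{2\pi}\cos(k\theta)\,(2-2\cos\theta)^{-\alpha/2}\,d\theta$ together with the Taylor expansion of $(1-x)^{-\alpha/2}$. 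Since $k\mapsto\Gamma(k+\frac\alpha2)/\Gamma(k+1-\frac\alpha2)$ is strictly decreasing for $\alpha\in]0,1[$ (the ratio of consecutive terms being $\frac{k+\alpha/2}{k+1-\alpha/2}<1$), the map $k\mapsto\lambda_k$ is strictly increasing; since $\lambda_m=\Omega_m^\alpha$, one has $\Omega_m^\alpha-\lambda_{nm}<0$ for every $n\geq2$. Hence $\mathcal L(\Omega_m^\alpha)$ has one-dimensional kernel spanned by $w\mapsto w^{-(m-1)}$, its symbol $\Omega_m^\alpha-\lambda_{nm}$ is bounded away from $0$ on all other frequencies, and --- the nonlocal part being a smoothing, hence compact, perturbation of an invertible operator --- $\mathcal L(\Omega_m^\alpha)$ is a Fredholm operator of index zero whose range is the co-dimension-one subspace $\overline{\mathrm{span}}\{\sin(nm\theta):n\geq2\}$ of $Y$.

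It remains to verify the transversality condition of the Crandall--Rabinowitz theorem, which here reduces to $\partial_\Omega(\Omega-\lambda_m)\big|_{\Omega=\Omega_m^\alpha}=1\neq0$: the mixed derivative $\partial_\Omega\partial_\phi F_\alpha(\Omega_m^\alpha,\mathrm{Id})$ applied to the kernel generator equals $\sin(m\theta)$, which lies outside the range of $\mathcal L(\Omega_m^\alpha)$. Crandall--Rabinowitz then provides, for each $m\geq2$, a local $C^1$ curve $s\mapsto(\Omega(s),\phi_s)$ of solutions of $F_\alpha=0$ with $\phi_0=\mathrm{Id}$, $\Omega(0)=\Omega_m^\alpha$ and $\phi_s$ non-trivial for small $s\neq0$; transporting through the conformal map yields the $m$-fold symmetric V-states $V_m$ of \eqref{E} bifurcating from $\theta_0=\chi_\mathbb{D}$ at $\Omega_m^\alpha$, as stated precisely in Theorem~\ref{thmV2}. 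Finally, the H\"older regularity is read off the equation itself: $F_\alpha(\Omega,\phi)=0$ expresses the top-order contribution $\Omega\,\mathrm{Im}(\cdots)$ in terms of the smoothing operator $N$, so a bootstrap on the H\"older scale shows $\partial D=\phi(\mathbb{T})\in C^{2-\alpha}$, the regularity gain being capped at one derivative minus $\alpha$ by the weak singularity $|w-\tau|^{-\alpha}$.
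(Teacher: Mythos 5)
Your overall architecture is the one the paper actually follows: conformal parametrization of the exterior domain, function spaces built on $C^{2-\alpha}(\mathbb{T})\to C^{1-\alpha}(\mathbb{T})$ with reality and $m$-fold symmetry encoded in the Fourier expansions, an explicit Fourier-multiplier computation of the linearization at the disc, and Crandall--Rabinowitz. The regularity step is also in the right spirit (the paper's Lemma \ref{noyau} needs exactly the bounds $|K|\lesssim |w-\tau|^{-\alpha}$, $|\partial_wK|\lesssim |w-\tau|^{-1-\alpha}$ that your factorization delivers), although your claim that $|(\phi(w)-\phi(\tau))/(w-\tau)|$ is \emph{smooth} is an overstatement for $\phi\in C^{2-\alpha}$ --- it is only H\"older --- and the paper proves $C^1$ dependence, not $C^\infty$.

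The genuine gap is in the Fredholm/range step. The multiplier is not ``$\Omega-\lambda_{nm}$ up to an innocuous common positive prefactor'': the paper's computation \eqref{dfz} gives, on the mode $b_n\overline{w}^n$, the output coefficient $\tfrac{n+1}{2}\big(\Omega-\Omega_{n+1}^\alpha\big)$, and the frequency-dependent factor $(n+1)/2$ is essential, not innocuous --- it is what makes the operator of order one (hence consistent with the choice $X\subset C^{2-\alpha}$, $Y\subset C^{1-\alpha}$) and what produces the gain of one derivative when you invert on the complement of the kernel. Your justification of the Fredholm property and of the identification of the range (``the nonlocal part being a smoothing, hence compact, perturbation of an invertible operator'') rests on the mistaken constant-prefactor premise: after restoring the factor $(n+1)$, the deviation $(n+1)(\Theta_\alpha-\Omega_{n+1}^\alpha)\sim c\,(n+1)^{\alpha}$ is a multiplier of \emph{positive} order $\alpha$, not a smoothing operator, and the ``invertible'' principal part $(\Omega-\Theta_\alpha)\tfrac{n+1}{2}$ must itself be shown to be an isomorphism of $X_m$ onto $Y_m$, which amounts to proving that dividing the Fourier coefficients of a $C^{1-\alpha}$ sine series by $n$ lands in $C^{2-\alpha}$ --- a statement that requires the boundedness of the Szeg\"o projection on H\"older spaces and is not automatic. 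The paper handles all of this by a direct construction of preimages using the asymptotics $\Omega_n^\alpha=\Theta_\alpha-c\,n^{\alpha-1}+O(n^{\alpha-2})$ (Lemma \ref{lemz1}), a finite geometric-type expansion of $\big(\Omega_m^\alpha-\Theta_\alpha-d_n\big)^{-1}$, an Abel-summation estimate showing the kernel $\sum_n n^{\alpha-1}w^n$ is in $L^1(\mathbb{T})$, and the Szeg\"o projection; this is the longest and most delicate part of the spectral analysis and is entirely absent from your sketch. The compact-perturbation route can be repaired (an order-$\alpha$ multiplier does map $C^{2-\alpha}$ into $C^{2-2\alpha}\subset\subset C^{1-\alpha}$ for $\alpha<1$), but as written neither half of that argument is supplied.
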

The proof of this theorem will be done  in the spirit of the incompressible Euler equations  by using the bifurcation theory through Crandall-Rabinowitz Theorem.  In the framework of this theory one should understand the structure of the linearized operator around the trivial solution $\hbox{Id}$
 and  identify the range of $\Omega $ where this operator is not invertible. More precisely, we should determine where this operator  belongs to the  Fredholm class   with zero index and possesses a simple kernel. By using some tricky integral formulae summarized in \mbox{Lemma \ref{lem}} one finds:  for $\displaystyle{h(w)=\sum_{n\in\NN} b_n \overline{w}^n}$
 
$$
\partial_\phi F_\alpha(\Omega, \hbox{Id})h(w)=\frac{1}{2}b_0\Omega\, i\big(w-\overline{w}\big)+\frac i2{\sum}_{n\geq1} (n+1)\big(\Omega-\Omega_{n+1}^\alpha) b_n \Big(w^{n+1}-\overline{w}^{n+1}\Big).
$$
Consequently, the linearized operator acts as a Fourier multiplier in the phase space. It behaves as  a differential operator of order one   because $\sup_{n}\Omega_n^\alpha<\infty$. Afterwards,  we prove that this  operator sends $C^{2-\alpha}$ to $C^{1-\alpha}$ and fulfills  the required  assumptions of Crandall-Rabinowitz Theorem: it is of Fredholm type with zero index and satisfies the transversality assumption. This latter one means that when we look for the linearized operator with $\Omega$ close to $\Omega_m^\alpha$, then the  eigenvalue which is close to zero (it depends on $\Omega$) must cross the real axis with non zero velocity at the value $\Omega=\Omega_m^\alpha$.

Next, we shall make few  comments about the statement of the main theorem.
\begin{remarks}

${\bf{1)}}$ For the incompressible Euler equations the dilation has no effects on the angular vorticity of the V-states. However, this property fails for the generalized SQG model because we change the homogeneity of the equation. As we shall see later in Proposition $\ref{pro-dila}$ the angular velocity depends on the inverse of  the dilation parameter raised to the power $\alpha$. This means that we can find small patches rotating  quickly and also  big ones rotating very slowly. In addition, the bifurcation set $\{\Omega_m^\alpha, m\geq2\}$ introduced in Theorem $\ref{thmV1}$ concerns only the bifurcation from the unit disc. However, to get a bifurcation from a disc of radius $r$ we have to scale this set as follows $\{r^{-\alpha}\Omega_m^\alpha, m\geq2\}$.
\vspace{0,3cm}

${\bf{2)}}$  For the SQG equation corresponding to $\alpha=1$ the situation is more delicate  as we shall discuss later in the end of the paper. Indeed, one can modify the function $F_\alpha$ in order to get a less singular kernel but  we note a regularity loss  for the linearized operator.  This appears more clearly when we compute the linearized operator which is given by
$$
\partial_\phi F_1(\Omega, \textnormal{Id})h(w)=\frac{1}{2}b_0\Omega\, i\big(w-\overline{w}\big)+\frac i2\sum_{n\geq1} (n+1)\big(\Omega-\Omega_{n+1}^1) b_n\Big(w^{n+1}- \overline{w}^{n+1}\Big), 
$$
with
\begin{equation}\label{dis1}
\Omega_n^1=\frac{2}{\pi}\sum_{k=1}^{n-1}\frac{1}{2k+1}\cdot
\end{equation}
We see that this operator acts as a Fourier multiplier with an additional logarithmic growth compared to the case $\alpha\in [0,1[$. 
As a consequence, this operator does not send $C^{1+\varepsilon}$ to $C^\varepsilon$ and it seems complicated to find suitable function spaces $X$ and $Y$ such that  Crandall-Rabinowitz Theorem can be applied. More discussion will be brought forward  the end of this paper; see Section $\ref{Limitc}.$ We also mention that the  preceding dispersion relation was computed  formally \mbox{in \cite{A-H}} by using Bessel functions. In Section $\ref{Limitc}$ we shall give another proof of this relation.
\vspace{0,3cm}

${\bf{3)}}$ The  boundary  of the rotating patches belongs  to H\"older space $C^{2-\alpha}$. For $\alpha=0$, we get better result as  it was shown \mbox{in \cite{HMV};} the boundary  is $C^\infty$ and convex when the V-states are close to the circle. The proof in this particular case uses in a deep way the algebraic structure of the kernel according to some recurrence formulae. It is not clear whether this approach can be implemented for the generalized SQG equation but we do  believe that the boundary is also $C^\infty$.
\vspace{0,3cm}

${\bf{4)}}$ The global existence of non stationary solutions for \eqref{E} is not known for $\alpha>0$. The V-states offer a suitable class of initial data with global existence  because they generate periodic solutions in time.
\vspace{0,3cm}

${\bf{5)}}$ As we shall see later in Lemma $\ref{lemz1}$, there is  continuity  of the spectrum $\Omega_m^\alpha$ with respect \mbox{to $\alpha$. }This means that,
$$
\lim_{\alpha\to0}\Omega_m^\alpha=\frac{m-1}{2m}\quad\hbox{and}\quad \lim_{\alpha\to1}\Omega_m^\alpha=\Omega_m^1,
$$
where $\Omega_m^1$ is defined in \eqref{dis1}.
\end{remarks}

The paper is organized as follows. In the next section we shall fix some notation. In \mbox{Section $3$,} we discuss some general properties of  the V-states. In Section $4$, we shall introduce and review  some background material on the bifurcation theory and singular integrals. In Section $5$, we will study the elliptic patches and show that they never rotate. This was recently proved \mbox{in \cite{Cor}} and we intend to give another proof  by using complex analysis formulation. Section $6$ is devoted to a general statement of Theorem \ref{thmV1}. The proof of Theorem \ref{thmV2} will be discussed in Sections $7, 8 $ and $9$. Last, in Section $10$  we will pay a special attention to the  SQG model corresponding to the limit case $\alpha=1$. We shall reformulate the boundary equation in order to kill the violent singularity of the kernel. In this case we give a complete description of the linearized operator and the dispersion relation. However we  are not able to give a complete proof of  the bifurcation of the V-states which should require a slightly different mathematical machinery than does the sub-critical case $\alpha\in [0,1[.$

\section{Notation}
In this section  we shall  fix some notation that will  be frequently used along this paper.
\begin{enumerate}
\item[$\bullet$] We denote by C any positive constant that may change from line to line.
\item[$\bullet$] For any positive real numbers $A$ and $B$, the notation $A \lesssim B$ means that there exists a
positive constant $C$ independent of $A$ and $B$ such that $A\leq CB$.
\item[$\bullet$] We denote by $\mathbb{D}$ the unit disc. Its boundary, the unit circle, is denoted by  $\mathbb{T}$. 
\item[$\bullet$] Let $f:\mathbb{T}\to \CC$ be a continuous function. We define  its  mean value by,
$$
\fint_{\mathbb{T}} f(\tau)d\tau\triangleq \frac{1}{2i\pi}\int_{\mathbb{T}}  f(\tau)d\tau,
$$
where $d\tau$ stands for the complex integration.
\item[$\bullet$] Let $X$ and $Y$ be two normed spaces. We denote by $\mathcal{L}(X,Y)$ the space of  all continuous linear maps $T: X\to Y$ endowed with its usual strong topology. 
\item[$\bullet$] For a linear operator $T:X\to Y,$ we denote by $N(T)$ and $R(T)$  the kernel and the range of $T$, respectively.  
 \item[$\bullet$] If $Y$ is a vector space and $R$ is a subspace, then $Y/ R$ denotes the quotient space.

\end{enumerate}

\section{Preliminaries on the V-states}
In this introductory section we will focus on  some general results on the rotating patches called also V-states  according to Deem and Zabusky terminology. These results were proved in \cite{HMV2} for Euler equations and will be likewise extended to   the SQG \mbox{model \eqref{E}.}
\subsection{General facts}
Now we intend to fix some vocabulary and prove in particular that the center of rotation of any V-state should coincide with its center of mass. We shall also deal with the effects of the dilation of the geometry on the angular velocity of the rotating patches.
\begin{definition}\label{defa1}
Let $D_0$ be a simply connected domain in the plane with smooth boundary. We say that $\theta_0={\bf{1}}_{D_0}$ is a rotating patch if the associated solution of \eqref{E} is given by 
$$
\theta(t,x)={\bf{1}}_{D_t}\quad \hbox{with}\quad  D_t=\bold{R}_{x_0,\varphi(t)}D_0.$$
Here we denote by $\bold{R}_{x_0,\varphi(t) }$  the planar rotation of
center  $x_0$ and \mbox{angle $\varphi(t)$}.  In addition, we assume that
the function $t\mapsto\varphi(t)$ is smooth and non-constant.

\end{definition}
The velocity  dynamics in the framework of rotating patches is
described as follows.
\begin{proposition}\label{vel-eq}
Let $\theta_0$ be a rotating patch as in Definition
${\ref{defa1}}$. Then the velocity $u(t)$ can be recovered from its
initial value $u_0$ according to the formula
$$
u(t,x)=\bold{R}_{x_0,\varphi(t)  } u_0(\bold{R}_{x_0,-\varphi(t)}x).
$$
\end{proposition}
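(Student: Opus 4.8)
The plan is to exploit the invariance of the contour dynamics formulation \eqref{veltu}--\eqref{veloc0} under rigid motions, together with the fact that the rotating patch $\theta(t)=\mathbf{1}_{D_t}$ with $D_t=\mathbf{R}_{x_0,\varphi(t)}D_0$ is, by Definition \ref{defa1}, the unique solution of \eqref{E} issued from $\theta_0$. The key observation is that the Biot--Savart law $u=-\nabla^\perp(-\Delta)^{-1+\frac\alpha2}\theta$ is built from the radially symmetric kernel $|x|^{-\alpha}$, hence it commutes with every planar isometry. So if $\mathbf{R}$ denotes a rotation about $x_0$ (or indeed any rigid motion), and we write $\theta^{\mathbf{R}}(x)\triangleq\theta(\mathbf{R}^{-1}x)$, then the velocity associated with $\theta^{\mathbf{R}}$ is $u^{\mathbf{R}}(x)=\mathbf{R}\,u(\mathbf{R}^{-1}x)$, where on the right $\mathbf{R}$ acts on the vector $u$ through its linear (rotational) part. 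This is the only ``hard'' input and it is a direct change of variables in \eqref{Integ1}: since $|x-y|=|\mathbf{R}^{-1}x-\mathbf{R}^{-1}y|$ and $\nabla^\perp$ transforms covariantly under rotations, the claim follows; I would state it as a short lemma or simply inline it.

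First I would set $\bar u(t,x)\triangleq\mathbf{R}_{x_0,\varphi(t)}\,u_0\big(\mathbf{R}_{x_0,-\varphi(t)}x\big)$ and $\bar\theta(t,x)\triangleq\mathbf{1}_{D_t}(x)=\theta_0\big(\mathbf{R}_{x_0,-\varphi(t)}x\big)$, and check that $\bar u$ is precisely the velocity field generated by $\bar\theta$ via the Biot--Savart law, using the rotational covariance just discussed with $\mathbf{R}=\mathbf{R}_{x_0,\varphi(t)}$. Next I would verify that the pair $(\bar\theta,\bar u)$ solves the transport equation in \eqref{E}: one computes $\partial_t\bar\theta$, which produces the rotational vector field $\dot\varphi(t)\,(x-x_0)^\perp$ acting on $\theta_0\circ\mathbf{R}_{x_0,-\varphi(t)}$, and checks $\partial_t\bar\theta+\bar u\cdot\nabla\bar\theta=0$ distributionally; equivalently, since $\bar\theta(t)=\mathbf{1}_{D_t}$, it suffices to check that the normal component of $\bar u$ on $\partial D_t$ equals the normal speed of the boundary, which is exactly what Definition \ref{defa1} encodes (the patch is transported rigidly). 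Finally, $\bar\theta(0,\cdot)=\theta_0$, so by the uniqueness of the patch solution to \eqref{E} (quoted from \cite{G,R} in the introduction) we conclude $\bar\theta=\theta$ and hence $u(t,x)=\bar u(t,x)=\mathbf{R}_{x_0,\varphi(t)}u_0\big(\mathbf{R}_{x_0,-\varphi(t)}x\big)$, which is the assertion.

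Alternatively, and perhaps more cleanly, I would avoid re-deriving the PDE and argue entirely at the level of \eqref{veltu}: the hypothesis says $\partial D_t=\mathbf{R}_{x_0,\varphi(t)}\,\partial D_0$, so plugging this into \eqref{veltu} and performing the substitution $\xi=\mathbf{R}_{x_0,\varphi(t)}\eta$ in the contour integral, using $|x-\xi|=\big|\mathbf{R}_{x_0,-\varphi(t)}x-\eta\big|$ and $d\xi=e^{i\varphi(t)}d\eta$ (complex integration picks up the rotation factor), gives directly $u(t,x)=e^{i\varphi(t)}\,u_0\big(\mathbf{R}_{x_0,-\varphi(t)}x\big)$, which is the complex-notation form of $\mathbf{R}_{x_0,\varphi(t)}u_0(\mathbf{R}_{x_0,-\varphi(t)}x)$. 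The main obstacle, such as it is, is purely bookkeeping: being careful that the rotation acts both on the point $x$ (inside the argument) and on the vector value of $u_0$ (the prefactor $e^{i\varphi(t)}$ in complex notation), and noting that the center $x_0$ drops out of the kernel because $|x-\xi|$ depends only on the difference. There is no genuine analytical difficulty here; the statement is essentially a symmetry/covariance fact for the generalized SQG Biot--Savart kernel.
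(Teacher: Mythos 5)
Your proposal is correct, and its essential input --- the commutation of the (fractional) Biot--Savart law with planar rotations --- is exactly the one the paper uses; the two arguments differ only in which direction the operator is applied. The paper applies the forward operator $-(-\Delta)^{1-\frac{\alpha}{2}}$ to $u$, expresses $\nabla^\perp\theta(t,\cdot)$ in terms of $\nabla^\perp\theta_0$, commutes the fractional Laplacian with the rotation, and must then finish with a uniqueness (injectivity) step to undo the operator. You instead work with the explicit inverse, namely the convolution \eqref{Integ1} (or the contour integral \eqref{veltu}), whose radial kernel makes the covariance a one-line change of variables and yields the identity for $u$ directly, with no inversion step; this is marginally more elementary and is a perfectly valid substitute. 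One remark: the first variant of your argument is longer than it needs to be. Definition \ref{defa1} already stipulates that $\theta(t)=\mathbf{1}_{D_t}$ \emph{is} the solution of \eqref{E}, and $u(t)$ is slaved to $\theta(t)$ by the constitutive law, so there is nothing to gain from re-verifying the transport equation for $(\bar\theta,\bar u)$ and invoking uniqueness of the patch solution: once you know that $\bar u$ is the Biot--Savart velocity of $\bar\theta=\theta(t)$, you are done. Your second, contour-integral variant is the clean version and is the one to keep.
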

\begin{proof}
We shall use the formula
$$
-(-\Delta)^{1-\frac\alpha2} u(t,x)=\nabla^\perp\theta(t,x).
$$
Performing some algebraic computations we get
\begin{eqnarray*}
\nabla^\perp\theta(t,x)&=&\bold{R}_{x_0,\varphi(t})\nabla^\perp\theta_0(\bold{R}_{x_0,-\varphi(t)}x)\\
&=&- \bold{R}_{x_0,\varphi(t)}(-\Delta)^{1-\frac\alpha 2} v_0(\bold{R}_{x_0,-\varphi(t)}x)\\
&=&-(-\Delta)^{1-\frac\alpha 2} \big(\bold{R}_{x_0,\varphi(t)}
v_0(\bold{R}_{x_0,-\varphi(t)}x)\big).
\end{eqnarray*}
Here we have used the commutation between the operator $(-\Delta)^{1-\frac\alpha 2}$ and the rotation  transformations which can be checked easily from the integral representation of the fractional Laplacian.
Therefore, the result follows from a uniqueness argument.  \end{proof}
Now, we will discuss a special result concerning the evolution of the center of mass of the  patch $\theta(t)={\bf{1}}_{D_t}$, defined by
\begin{eqnarray*}
X(t)&=&\frac{1}{|D_t|}\int_{D_t} x\, dx=\frac{1}{|D_0|}\int_{D_t} x\, dx.
\end{eqnarray*}
We have used the fact that the volume of a patch is an invariant of the motion since the velocity is divergence free.
Next, we prove that the center  of mass is stationary for any  patch solution of  \eqref{E}. This is known for Euler equation and we shall give here a similar  proof.
\begin{proposition}\label{cons1}
Let $\theta(t)={\bf{1}}_{D_t}$ be a  solution of  \eqref{E} then the center of mass is fixed, that is $$
 X(t)=X(0).
$$
\end{proposition}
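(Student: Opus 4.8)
The plan is to differentiate $X(t)$ in time and show that $X'(t)=0$. Since $\theta(t)={\bf 1}_{D_t}$ solves the transport equation with divergence-free velocity, we have $\partial_t \theta + \dv(u\,\theta)=0$, so
$$
\frac{d}{dt}\int_{\RR^2} x\,\theta(t,x)\,dx = \int_{\RR^2} x\,\partial_t\theta(t,x)\,dx = \int_{\RR^2} x_j\, u(t,x)\cdot\nabla\theta(t,x)\,dx = -\int_{\RR^2} u_j(t,x)\,\theta(t,x)\,dx,
$$
after integrating by parts componentwise and using $\dv u = 0$. Thus $|D_0|\,X'(t) = -\int_{D_t} u(t,x)\,dx$, and the whole matter reduces to showing that the velocity field has zero mean over the patch region $D_t$ at each time.

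To establish $\int_{D_t} u(t,x)\,dx = 0$ I would exploit the explicit singular-integral representation of $u$ in terms of $\theta$. Writing $u = -\nabla^\perp(-\Delta)^{-1+\frac\alpha2}\theta$, one has the kernel form $u(t,x)=c_\alpha\int_{\RR^2} \frac{(x-y)^\perp}{|x-y|^{2-\alpha+ \text{(appropriate exponent)}}}\,\theta(t,y)\,dy$; more cleanly, using \eqref{Integ1}, $(-\Delta)^{-1+\frac\alpha2}\theta(x)=\frac{C_\alpha}{2\pi}\int_{D_t}|x-y|^{-\alpha}\,dy$, so
$$
\int_{D_t} u(t,x)\,dx = -\frac{C_\alpha}{2\pi}\int_{D_t}\int_{D_t} \nabla_x^\perp\big(|x-y|^{-\alpha}\big)\,dy\,dx.
$$
Now $\nabla_x^\perp(|x-y|^{-\alpha})$ is antisymmetric under interchange of $x$ and $y$: indeed $\nabla_x(|x-y|^{-\alpha}) = -\nabla_y(|x-y|^{-\alpha})$, and applying the fixed rotation $\nabla^\perp$ preserves this sign relation, so the integrand of the double integral is odd under $(x,y)\mapsto(y,x)$. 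Since the domain $D_t\times D_t$ is symmetric under this swap, the double integral vanishes. (One should check integrability: the singularity $|x-y|^{-1-\alpha}$ with $\alpha<1$ is locally integrable in $\RR^2$ after the cancellation, or one may first regularize by excising a ball of radius $\eps$ around the diagonal, use the symmetry to kill the principal part, and let $\eps\to0$.)

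The main obstacle is purely the care needed to justify differentiating under the integral sign and the symmetrization in the presence of the singular kernel — i.e. controlling the near-diagonal contribution when $\alpha$ is close to $1$ and making the integration by parts rigorous when $u$ is only sub-Lipschitz. I would handle this by working with the smooth (for short time) patch boundary, writing $\int_{D_t}u$ via the contour representation \eqref{veltu} and Fubini, and performing the odd-symmetry cancellation on a cut-off kernel $|x-y|^{-\alpha}\mathbf 1_{|x-y|>\eps}$ before passing to the limit; the error terms are $O(\eps^{1-\alpha})\to 0$. An alternative, perhaps cleaner, route is to invoke Proposition \ref{vel-eq} together with the conservation of the center of mass under rotations: since $u(t)$ is obtained from $u_0$ by conjugation with the rotation about $x_0$, and the patch $D_t$ is the rotated image of $D_0$, the integral $\int_{D_t}u(t,x)\,dx$ equals the rotation of $\int_{D_0}u_0$; but this only gives $X'(t)$ rotating rigidly, so one still needs the vanishing of $\int_{D_0}u_0$ itself, which is exactly the symmetrization argument above applied at $t=0$. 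Either way the heart of the proof is the antisymmetry of the Biot–Savart-type kernel, and everything else is bookkeeping.
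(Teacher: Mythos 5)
Your proposal is correct and follows the same strategy as the paper: differentiate the first moment, use the transport equation and $\dv u=0$ to reduce the claim to $\int_{\RR^2} u\,\theta\,dx=0$, and then exploit the antisymmetry of the Biot--Savart-type operator $\nabla^\perp(-\Delta)^{-1+\frac\alpha2}$. The only difference is in how that last cancellation is justified --- you symmetrize the double integral against the odd kernel $\nabla_x^\perp\big(|x-y|^{-\alpha}\big)$ (which is in fact absolutely convergent for $\alpha<1$ since the singularity is $|x-y|^{-1-\alpha}$ in $\RR^2$, so no cutoff is needed), whereas the paper splits the operator into two self-adjoint halves $(-\Delta)^{-\frac12+\frac\alpha4}$ and writes the integrand as the perfect derivative $\tfrac12\nabla^\perp\big\{\big((-\Delta)^{-\frac12+\frac\alpha4}\theta\big)^2\big\}$ --- and there is a harmless dropped minus sign in your first display, since $\partial_t\theta=-u\cdot\nabla\theta$.
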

\begin{proof}
The invariance of the center of mass follows from the constancy of
the functions
$$
f_j(t)\triangleq \int_{\RR^2}x_j\,\theta(t,x)\,dx, \quad j=1,2.
$$
Differentiating this function with respect to the time
variable combined with the  \mbox{equation \eqref{E}} and integration by parts  yields
\begin{eqnarray*}
f_j^\prime(t)&=&\int_{\RR^2}x_j\,\partial_t\theta(t,x)\,dx\\
&=&-\int_{\RR^2}x_j\, (u\cdot\nabla \theta)(t,x)\,dx\\
&=&\int_{\RR^2}u^j(t,x)\, \theta(t,x)\,dx.
\end{eqnarray*}
Using the relation between $u$ and $\theta$ and integrating once again by parts we get
\begin{eqnarray*}
f_1^\prime(t)
&=&-\int_{\RR^2}\theta \,\nabla^\perp(-\Delta)^{-1+\frac\alpha2}\theta\,dx\\
&=&-\int_{\RR^2}\big\{(-\Delta)^{-\frac12+\frac\alpha4}\theta\big\} \,\nabla^\perp\big\{(-\Delta)^{-\frac12+\frac\alpha4}\theta\big\}\,dx\\
&=&0.
\end{eqnarray*}
This completes the proof of the desired result.
\end{proof}
In consequence, we obtain the following result.
\begin{coro}\label{centremass}
Let $\theta_0={\bf{1}}_{D_0}$ be a rotating  patch center
 around some  point $x_0$. Then necescentersarily $x_0$ is
the center of mass of the domain $D_0.$
\end{coro}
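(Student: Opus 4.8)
The plan is to combine the invariance of the center of mass established in Proposition \ref{cons1} with the equivariance of the center of mass under rigid motions, and then exploit the non-constancy of the rotation angle. First I would record the equivariance statement: if $\theta_0=\mathbf{1}_{D_0}$ is a rotating patch, then for every $t$,
$$
X(t)=\frac{1}{|D_0|}\int_{D_t}x\,dx=\frac{1}{|D_0|}\int_{\mathbf{R}_{x_0,\varphi(t)}D_0}x\,dx=\mathbf{R}_{x_0,\varphi(t)}\Big(\frac{1}{|D_0|}\int_{D_0}x\,dx\Big)=\mathbf{R}_{x_0,\varphi(t)}X(0),
$$
where the third equality is the change of variables $x\mapsto \mathbf{R}_{x_0,\varphi(t)}x$ together with the fact that an affine isometry commutes with taking barycenters; the Jacobian is $1$ so $|D_t|=|D_0|$ as already noted.

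Next I would invoke Proposition \ref{cons1}, which gives $X(t)=X(0)$ for every $t$ since $\theta(t)=\mathbf{1}_{D_t}$ is a genuine patch solution of \eqref{E}. Putting the two identities together yields
$$
\mathbf{R}_{x_0,\varphi(t)}X(0)=X(0),\qquad \forall\, t\ge 0,
$$
i.e. $X(0)$ is a fixed point of the planar rotation $\mathbf{R}_{x_0,\varphi(t)}$ for every $t$.

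Finally, since by Definition \ref{defa1} the function $t\mapsto\varphi(t)$ is smooth and non-constant, there exists $t_\star$ with $\varphi(t_\star)\notin 2\pi\ZZ$; for such $t_\star$ the rotation $\mathbf{R}_{x_0,\varphi(t_\star)}$ is not the identity and therefore has $x_0$ as its unique fixed point. Hence $X(0)=x_0$, which is precisely the assertion that the center of rotation coincides with the center of mass of $D_0$. I do not anticipate a genuine obstacle here; the only point requiring a line of care is the equivariance $X(t)=\mathbf{R}_{x_0,\varphi(t)}X(0)$, i.e. checking that the barycenter transforms correctly under a rotation whose center is not the origin (one writes $\mathbf{R}_{x_0,\varphi}x=x_0+R_\varphi(x-x_0)$ with $R_\varphi$ linear and uses linearity of the integral).
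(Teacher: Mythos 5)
Your proposal is correct and follows essentially the same route as the paper: change of variables to get $X(t)=\mathbf{R}_{x_0,\varphi(t)}X(0)$, then Proposition \ref{cons1} to conclude $X(0)$ is a fixed point of a nontrivial rotation, hence equals $x_0$. Your extra care about the affine (rather than linear) nature of $\mathbf{R}_{x_0,\varphi}$ and about using the non-constancy of $\varphi$ to produce a rotation that is not the identity are welcome details that the paper leaves implicit.
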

\begin{proof}
By a change of variables
\begin{eqnarray*}
X(t)&=&\frac{1}{|D_0|}\int_{\RR^2}x\,\theta_0(\bold{R}_{x_0,-\varphi(t)} x)dx\\
&=&\frac{1}{|D_0|}\int_{\RR^2}(\bold{R}_{x_0,\varphi(t)}x)\theta_0(x)dx\\
&=&\frac{1}{|D_0|}\bold{R}_{x_0,\varphi(t)}\left(\int_{\RR^2}x\theta_0(x)dx\right)\\
&=&\bold{R}_{x_0,\varphi(t)} X(0).
\end{eqnarray*}
Since $X(t)=X(0)$ by Proposition \eqref{cons1},  $X(0)$ is fixed by
the rotation and thus $X(0)=x_0$, as claimed.
\end{proof}
Next we shall discuss how the dilation affects the angular velocity. We point out  that the following  notation that we shall use $D_\lambda=\lambda D$ means a dilation of the domain $D$ with respect to its center of mass.
\begin{proposition}\label{pro-dila}
Let $\theta_0={\bf{1}}_D$ be a rotating patch with constant angular velocity $\Omega.$ Let $\lambda>0$ and denote by $D_\lambda=\lambda D$. Then $D_\lambda$ is also a rotating patch with angular velocity $\Omega_\lambda=\frac{\Omega}{\lambda^{\alpha}}\cdot$
\end{proposition}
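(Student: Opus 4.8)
The plan is to derive the result from the scaling invariance of equation \eqref{E}. By Corollary \ref{centremass} we may assume that the center of mass of $D$ lies at the origin, so that the rotating patch is $\theta(t,x)={\bf{1}}_{D_t}$ with $D_t={\bf R}_{0,\Omega t}D$, and $\Omega\neq 0$ since $\varphi$ is non-constant. The whole proof reduces to checking that the rescaled function
$$
\widetilde\theta(t,x):=\theta\big(\lambda^{-\alpha}t,\,\lambda^{-1}x\big)
$$
solves \eqref{E} with initial datum ${\bf{1}}_{\lambda D}$. Once this is done, the uniqueness of the patch solution issued from ${\bf{1}}_{D_\lambda}$ (recalled in the introduction) identifies $\widetilde\theta$ with that solution, and an elementary identification of its support exhibits the claimed angular velocity.

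The one computation that matters is the effect of a dilation on the velocity operator. Starting from the kernel representation \eqref{Integ1}, the substitution $y=\lambda z$ shows that for $\Theta(x)=\vartheta(\lambda^{-1}x)$ one has $(-\Delta)^{-1+\frac\alpha2}\Theta(x)=\lambda^{2-\alpha}\big[(-\Delta)^{-1+\frac\alpha2}\vartheta\big](\lambda^{-1}x)$, so that applying $-\nabla^\perp$ and the chain rule the associated velocities satisfy $U(x)=\lambda^{1-\alpha}u(\lambda^{-1}x)$. Applying this at each fixed time, the velocity attached to $\widetilde\theta$ is $\widetilde u(t,x)=\lambda^{1-\alpha}u(\lambda^{-\alpha}t,\lambda^{-1}x)$. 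Since $\partial_t\widetilde\theta(t,x)=\lambda^{-\alpha}(\partial_t\theta)(\lambda^{-\alpha}t,\lambda^{-1}x)$ and $\nabla\widetilde\theta(t,x)=\lambda^{-1}(\nabla\theta)(\lambda^{-\alpha}t,\lambda^{-1}x)$, every term of $\partial_t\widetilde\theta+\widetilde u\cdot\nabla\widetilde\theta$ carries the common factor $\lambda^{-\alpha}$, whence $\partial_t\widetilde\theta+\widetilde u\cdot\nabla\widetilde\theta=\lambda^{-\alpha}\big(\partial_t\theta+u\cdot\nabla\theta\big)(\lambda^{-\alpha}t,\lambda^{-1}x)=0$; moreover $\widetilde\theta(0,\cdot)={\bf{1}}_{\lambda D}$. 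One can equally run this argument on the boundary integral \eqref{veltu}, which scales in the same way.

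To conclude, since dilations and rotations centered at the origin commute, $D_{\lambda^{-\alpha}t}={\bf R}_{0,\lambda^{-\alpha}\Omega t}D$ gives
$$
\widetilde\theta(t,x)={\bf{1}}_{D_{\lambda^{-\alpha}t}}(\lambda^{-1}x)={\bf{1}}_{\lambda\,{\bf R}_{0,\lambda^{-\alpha}\Omega t}D}(x)={\bf{1}}_{{\bf R}_{0,(\Omega/\lambda^\alpha)t}(\lambda D)}(x),
$$
so that the support of $\widetilde\theta$ is $\widetilde D_t={\bf R}_{0,\Omega_\lambda t}D_\lambda$ with $\Omega_\lambda=\Omega/\lambda^\alpha$, and $t\mapsto\Omega_\lambda t$ is smooth and non-constant. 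By uniqueness, ${\bf{1}}_{D_\lambda}$ is therefore a rotating patch in the sense of Definition \ref{defa1} with angular velocity $\Omega/\lambda^{\alpha}$. There is no genuine obstacle in this argument; the only point requiring attention is the bookkeeping of homogeneities — that the time is rescaled by $\lambda^{\alpha}$, so that for $\alpha=0$ one recovers the Euler fact that dilation leaves the angular velocity unchanged — together with the observation that uniqueness of the local patch solution is what propagates the rigid-rotation structure to all times.
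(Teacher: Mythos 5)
Your proof is correct, but it follows a genuinely different route from the paper's. The paper argues entirely at the level of the stationary boundary equation \eqref{model0}: it multiplies that identity by $\lambda^{2}$ and performs the change of variables $w=\lambda\zeta$ in the contour integral, reading off directly that $\tau=\lambda z\in\partial D_\lambda$ satisfies the same equation with $\Omega$ replaced by $\lambda^{-\alpha}\Omega$. You instead exploit the space--time scaling symmetry of the evolution equation \eqref{E} itself, checking that $\widetilde\theta(t,x)=\theta(\lambda^{-\alpha}t,\lambda^{-1}x)$ is again a solution (your computation of the homogeneity of $(-\Delta)^{-1+\frac\alpha2}$ and of the velocity, $U(x)=\lambda^{1-\alpha}u(\lambda^{-1}x)$, is right), and then invoke uniqueness of the patch solution to identify $\widetilde\theta$ with the solution issued from ${\bf 1}_{\lambda D}$. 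The paper's argument is shorter, purely algebraic, and self-contained: it never needs the well-posedness or uniqueness theory for $\alpha$-patches, only the characterization of V-states by \eqref{model0}. Your argument is more conceptual -- it explains the exponent $\alpha$ as the natural time rescaling $t\mapsto\lambda^{-\alpha}t$ of the model and immediately recovers the Euler case $\alpha=0$ -- but it leans on the local uniqueness of contour solutions (cited in the introduction from the works of Gancedo and Rodrigo), which is a nontrivial external input that the paper's proof avoids. Both are complete; just be aware that the dependence on uniqueness is a genuine additional hypothesis in your version.
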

\begin{proof}
Without loss of generality we can assume that the center of rotation is the origin. Then according to the equation \eqref{model0} we have
$$
\Omega\textnormal{Re}\big\{z\overline{z'}\big\}=\textnormal{Im}\Big\{\frac{C_\alpha}{2\pi}\int_{\partial D}\frac{d\zeta}{\vert z-\zeta\vert^\alpha}\overline{z'}\Big\},\quad \forall\, z\in \partial D.
$$
Let $\ z\in D$ and $\tau=\lambda z, $ then multiplying the preceding equation by $\lambda^2$ and using the change of variables $w=\lambda \zeta$ we get
$$
\big(\lambda^{-\alpha}\Omega\big)\textnormal{Re}\big\{\tau\overline{\tau'}\big\}= \textnormal{Im}\Big\{\frac{C_\alpha}{2\pi}\int_{\partial D_\lambda}\frac{dw}{\vert \tau-w\vert^\alpha}\overline{\tau'}\Big\},\quad \forall\, \tau\in \partial D_\lambda.
$$
This shows that $D_\lambda$ rotates with the angular velocity $\lambda^{-\alpha}\Omega$ as it is claimed.
\end{proof}
\subsection{Boundary equation }

Before proceeding further with the consideration of the V-states,  we shall recall Riemann mapping theorem which  is one of the most important results in complex analysis. To restate this result we shall recall the definition of {\it simply connected} domains. Let $\widehat{\mathbb{C}}\triangleq \mathbb{C}\cup\{\infty\}$ denote the Riemann sphere. We  say that a domain $\Omega\subset \widehat{\mathbb{C}}$ is {\it simply connected} if the set $ \widehat{\mathbb{C}}\backslash \Omega$ is connected.

{\it Riemann Mapping Theorem.} Let $\mathbb{D}$ denote the unit open ball and  $\Omega\subset \CC$ be a simply connected bounded domain. Then there is a unique bi-holomorphic map called also conformal map,  $\Phi: \CC\backslash\overline{\mathbb{D}}\to  \CC\backslash\overline{\Omega}$ taking the form
$$
\Phi(z)= az+\sum_{n\in\NN} \frac{a_n}{z^n} \quad \textnormal{with}\quad a>0.
$$
 In this theorem the regularity of the boundary has no effect regarding the existence of the conformal mapping  but  it contributes in the boundary behavior of the conformal mapping, see for instance \cite{P,WS}. Here, we shall recall the following result. 
\vspace{0,2cm}


{\it  Kellogg-Warschawski's theorem.} It can be found in  \cite{WS} or in  \cite[Theorem 3.6]{P}.  It asserts that if the
boundary $\Phi(\mathbb{T})$ is a Jordan curve of class $C^{n+1+\beta},$ with
$n\in \NN$  and $ 0<\beta<1$, then the conformal map
$\Phi:\CC\backslash\overline{\mathbb{D}}\to  \CC\backslash\overline{\Omega}$ has a continuous
extension to $\CC\backslash\mathbb{D}$  which is of \mbox{class
$C^{n+1+\beta}.$}

Next, we shall write down the equation governing the boundary of the V-states; it is highly nonlinear and non local as the next proposition shows.  
\begin{proposition}\label{prop-bound}
Let $\alpha\in ]0,1[$, $D_0$ be a smooth simply connected domain and $D_t=R_{x_0, \varphi(t)} D_0$ be a V-state of the model \eqref{E}. Then, the following claims hold true.
\begin{enumerate}
\item The point $x_0$ is the center of mass of $D_0$ and $\dot\varphi(t)=\Omega$ is constant.
\item  Assume that $x_0=0$ and let $\phi:\mathbb{D}^c\to D_0^c$ be the conformal mapping, then
\begin{equation}\label{Rotaeq}
\textnormal{Im}\Bigg\{\bigg(\Omega\,\phi(w)-{C_\alpha}\mathop{{\fint}}_\mathbb{T}\frac{\phi'(\tau)d\tau}{\vert \phi(w)-\phi(\tau)\vert^\alpha}\bigg)\overline{w}\,{\overline{\phi'(w)}}\Bigg\}=0,\quad \forall w\in \mathbb{T},
\end{equation}
with $\displaystyle{C_\alpha=\frac{\Gamma(\alpha/2)}{2^{1-\alpha}\Gamma(\frac{2-\alpha}{2})}}$.
\end{enumerate}
\end{proposition}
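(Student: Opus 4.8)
The plan is to start from the contour dynamics formulation and translate the rigid-rotation ansatz into the boundary equation written through the conformal map. First, claim (1) is immediate: by Corollary \ref{centremass} the center of rotation $x_0$ of any rotating patch must be the center of mass of $D_0$, and the fact that $\dot\varphi(t)=\Omega$ is constant follows by differentiating the relation $D_t=\bold{R}_{x_0,\varphi(t)}D_0$ in the contour dynamics equation: substituting the rotating ansatz $\gamma(t,\sigma)=x_0+e^{i\varphi(t)}(\gamma_0(\sigma)-x_0)$ into \eqref{veloc0} and using Proposition \ref{vel-eq} (the velocity co-rotates with the patch) forces $\dot\varphi(t)$ to equal a time-independent quantity determined by $D_0$ alone; call it $\Omega$.

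For claim (2), assume $x_0=0$. The kinematic condition for a uniformly rotating patch is that in the co-rotating frame the boundary is a streamline; equivalently, at each point $z\in\partial D_0$ the normal component of $u_0(z)-\Omega z^{\perp}$ vanishes, where $z^\perp$ corresponds to $iz$ in complex notation. Writing this in complex form: with $z'$ denoting a tangent vector to $\partial D_0$ at $z$, the condition reads
$$
\textnormal{Im}\Big\{\big(\Omega z - u_0(z)\big)\overline{z'}\Big\}=0,\qquad \forall z\in\partial D_0,
$$
because $\textnormal{Re}\{z\overline{z'}\}=\textnormal{Im}\{iz\,\overline{z'}\}$ reproduces the angular term and the normal component of $u_0$ is the imaginary part of $\overline{u_0(z)}z'$ up to sign. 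Here $u_0(z)$ must be read as the complex conjugate of the velocity, consistent with the contour dynamics convention, and I will use \eqref{veltu} at $t=0$:
$$
u_0(z)=\frac{C_\alpha}{2\pi}\int_{\partial D_0}\frac{d\xi}{|z-\xi|^\alpha}.
$$
Next I parametrize $\partial D_0$ by $z=\phi(w)$, $w\in\mathbb{T}$, so that $d\xi=\phi'(\tau)\,d\tau$ along $\partial D_0$ and a tangent vector at $z$ is $z'=w\,\phi'(w)$ (up to a positive scalar, which does not affect the vanishing of the imaginary part). Substituting and recognizing the mean-value notation $\fint_{\mathbb{T}}=\frac{1}{2i\pi}\int_{\mathbb{T}}$, the velocity integral becomes $C_\alpha\fint_{\mathbb{T}}\frac{\phi'(\tau)}{|\phi(w)-\phi(\tau)|^\alpha}d\tau$. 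Plugging into the kinematic identity and pulling the common factor $\overline{w}\,\overline{\phi'(w)}$ inside the imaginary part (legitimate since $|w|=1$, and absorbing the positive normalization) yields exactly \eqref{Rotaeq}, with $C_\alpha$ as stated since that is the constant appearing in \eqref{Integ1}–\eqref{veltu}.

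The main technical point to be careful about — and what I expect to be the only real obstacle — is the justification that the singular boundary integral in \eqref{veltu} is well defined and that the manipulations (change of variables to the circle, moving the factor $\overline{w\,\phi'(w)}$ across the integral) are valid: since $\alpha\in\,]0,1[$ the kernel $|z-\xi|^{-\alpha}$ is integrable on the rectifiable curve $\partial D_0$, so $u_0$ is a genuine (not merely principal-value) integral and is continuous up to the boundary; the smoothness of $D_0$ together with Kellogg–Warschawski gives enough regularity of $\phi$ on $\overline{\mathbb{D}^c}$ to carry the change of variables through. I would also note that the apparent singularity $\phi(w)=\phi(\tau)$ inside the transformed integral is again integrable because $\phi$ is bi-Lipschitz near $\mathbb{T}$. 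The remaining steps — identifying the angular term via $\textnormal{Re}\{z\overline{z'}\}$ and matching constants — are routine.
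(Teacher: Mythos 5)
Your overall strategy matches the paper's: claim (1) via Corollary \ref{centremass} plus a kinematic argument, and claim (2) via the co-rotating streamline (normal-component) condition combined with the contour representation \eqref{veltu} and the conformal parametrization. Part (2) is in outline the paper's derivation and is sound, but your bookkeeping of factors of $i$ is not: the tangent vector induced by $w=e^{i\eta}$ is $iw\,\phi'(w)$, not $w\,\phi'(w)$ (a factor of $i$ is not ``a positive scalar''), and the rotational velocity of the boundary point $z$ is $i\Omega z$, not $\Omega z$; these two $i$'s, together with the one hidden in $\fint=\frac{1}{2i\pi}\int$, must be tracked simultaneously to land exactly on \eqref{Rotaeq} rather than on an equation with $\textnormal{Re}$ where $\textnormal{Im}$ should be. The hedges ``read $u_0$ as the complex conjugate of the velocity'' do not resolve this; the clean route is to write $\overline{z'}=-i\overline{w}\,\overline{\phi'(w)}$ and substitute into $\Omega\,\textnormal{Re}\{z\overline{z'}\}=\textnormal{Im}\{u_0(z)\overline{z'}\}$.

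The genuine gap is in your proof that $\dot\varphi$ is constant. You substitute the rigid-rotation ansatz $\gamma(t,\sigma)=x_0+e^{i\varphi(t)}(\gamma_0(\sigma)-x_0)$ into the Lagrangian contour dynamics equation $\partial_t\gamma=u(t,\gamma)$. That equation holds only for the flow-map parametrization, and a V-state does not assert that boundary particles follow the rigid rotation --- only that the \emph{set} $D_t$ equals $R_{x_0,\varphi(t)}D_0$; particles may, and in general do, slip tangentially along the boundary. Carried out literally, your substitution (with $x_0=0$, using Proposition \ref{vel-eq}) yields $i\dot\varphi(t)\,\gamma_0(\sigma)=u_0(\gamma_0(\sigma))$ for every $\sigma$, i.e.\ that $u_0$ coincides with the rigid-rotation field on $\partial D_0$. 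This is false already for the Kirchhoff ellipse at $\alpha=0$, where $u_0-i\Omega z$ is a nonzero tangential field on the boundary; so your premise would rule out all non-circular V-states. The paper instead derives, from the transport of a defining function, that \emph{any} $C^1$-in-time parametrization of $\partial D_t$ satisfies only $\textnormal{Im}\{(\partial_t\gamma_t-u(t,\gamma_t))\overline{\gamma_t'}\}=0$; choosing $\gamma_t=e^{i\varphi(t)}\gamma_0$ this becomes $\frac{\dot\varphi(t)}{2}\frac{d}{ds}|\gamma_0(s)|^2=\textnormal{Im}\{u_0(\gamma_0)\overline{\gamma_0'}\}$, whose right-hand side is time-independent, whence $\dot\varphi$ is constant unless $|\gamma_0|$ is constant --- the degenerate case of a disc, which your write-up also omits and which must be recorded separately.
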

\begin{proof}

$\bf{(1)}$  The first claim was proved in Proposition \ref{centremass} and so it remains to check that the angular velocity is constant. For this aim we shall start with writing the boundary  equation of a V-state.
Loosely speaking, the boundary  $\partial D_{t}$ is a material surface and 
there is no flux matter across it.  In other words, it  is transpocenterrted
by  the flow $\psi(t)$ defined in the next few lines. For a smooth initial boundary, say of class $C^1,$  there exists a function $\varphi_0: \RR^2\to\RR$  of class  $C^1$  such that 
\begin{equation*}\label{gammaj}
\partial D_0 = \Big\{x \in \mathbb{R}^2;\varphi_0(x) = 0 \Big\},
\end{equation*}
with the additional constraints: $\forall x\in \partial D_0,\,\,\, \nabla \varphi_0(x) \neq 0,\quad $
 $$
 \varphi_0 < 0\,\,\,\, \textnormal{on}\,\, \,\, D_0\quad \textnormal{and}\qquad \varphi_0 > 0\,\,\,\, \textnormal{on}\,\, \,\,  \mathbb{R}^2
\setminus \overline{D_0}.
$$  One says in this case that $\varphi_0$ is a defining
function for $\partial D_0.$ Set
\begin{equation*}\label{fijt}
F(t,x)= \varphi_0(\psi^{-1}(t,x)),
\end{equation*}
where $\psi$ is the flow associated to the velocity $u$ and given by the integral equation
$$
\psi(t,x)=x+\int_0^tu\big(\tau,\psi(\tau,x)\big)d\tau.
$$
It follows that the maps  $x \mapsto F(t,x)$
is a defining function for $\partial D_t = \psi(t,\partial D_0)$ and satisfies the transport equation
 $$
 \partial_t F+u\cdot\nabla F=0.
 $$
 Now, let $\sigma\in [0,2\pi]\mapsto \gamma_t(\sigma)$ be a parametrization of  $\partial D_t,$ continuously differentiable in $t$,
  and let $\vec{n}_t$ be the unit outward normal vector to $\partial D_t.$ Differentiating
the equation $F(t,\gamma_t(\sigma))=0$ with respect to $t$ yields
 $$
 \partial_t F+\partial_t\gamma_t\cdot\nabla F=0.
 $$
 Since for $x \in \partial D_t$ the vector  $\nabla F(t,x)$ is colinear to the normal vector   $\vec{n}_t$ then 
 \begin{equation}\label{boundarys}
 (\partial_t\gamma_t-u(t,\gamma_t))\cdot\vec{n}_t=0.
 \end{equation}
 The meaning of \eqref{boundarys} is that the velocity of the boundary and the the velocity of the fluid particle occupying
 the same position   have the same normal components. 
 We observe that the  equation \eqref{boundarys} can be written in a complex form which seems to be more convenient in our case,
 \begin{equation}\label{boundary}
 \hbox{Im}  \Big\{(\partial_t\gamma_t-v(t,\gamma_t))\overline{\gamma_t^\prime}\Big\}=0,
 \end{equation}
 where the "prime" denotes  the derivative with respect to the $\sigma$ variable.

 We now take a closer look at the case of a rotating  connected  patch.
 Assume that the boundary  rotates with the angular  velocity $\dot{\theta}(t)$ around its center of mass  which can be assumed to be the origin.
  According to the  Proposition \ref{vel-eq} the  velocity $u(t)$ can be recovered from the initial velocity $u_0$ through  to the formula
\begin{equation}\label{veloc}
u(t,x)=e^{i\theta(t)}u_0(e^{-i\theta(t)}x).
\end{equation}
Hence
$$
\textnormal{Im}\big\{u(t,\gamma_t)\overline{\gamma'_t}\big\}=\textnormal{Im}\big\{u_0(\gamma_0)\overline{\gamma'_0}\big\}.
$$
The rotating patch has a standard  parametrization  given by  $\gamma_t=e^{i\theta(t)}\gamma_0$  which yields
$$
\textnormal{Im}\big\{\partial_t\gamma_t\overline{\gamma'_t}\big\}=\dot\theta(t)\textnormal{Re}\big\{\gamma_0\overline{\gamma'_0}\big\}.
$$
Consequently the equation \eqref{boundary} becomes
\begin{equation*}
\dot\theta(t)\textnormal{Re}\big\{\gamma_0\overline{\gamma'_0}\big\}=\textnormal{Im}\big\{u_0(\gamma_0)\overline{\gamma'_0}\big\}
\end{equation*}
which is equivalent to
$$
\frac{\dot{\theta}(t)}{2}\frac{d}{ds}|\gamma_0(s)|^2=\hbox{Im}\Big\{
u_0(\gamma_0)\,\overline{\gamma_0^\prime}  \Big\}.
$$
If there exists some $s$ with $\frac{d}{ds}|\gamma_0(s)|^2\neq 0$
then, since the right-hand side does not depend on the time
variable, we conclude that $\dot{\theta}(t)=\Omega$ is constant. Otherwise,
$\frac{d}{ds}|\gamma_0(s)|^2$ vanishes everywhere, which tells us
that the initial domain is a disc and therefore it   rotates with any angular velocity. Finally we get the boundary equation
\begin{equation}\label{rotsq1}
\Omega \,\textnormal{Re}\big\{ z\, \overline{z^\prime}\big\}=\textnormal{Im}\big\{u_0(z)\,\overline{z^\prime}\big\},\quad \forall z\in D_0.
\end{equation}
Recall that $z^\prime$ is a tangent vector to the boundary $\partial D_0$ at the point $z.$
\vspace{0,3cm}

${\bf{(2)}}$ 
Combining \eqref{rotsq1} with   the velocity formula \eqref{veltu}  we get
\begin{equation}\label{model0}
\Omega\,\textnormal{Re}\big\{z\overline{z'}\big\}= C_\alpha\textnormal{Im}\Bigg\{\frac{1}{2\pi}\int_{\partial D_0}\frac{d\zeta}{\vert z-\zeta\vert^\alpha}\overline{z'}\Bigg\},\quad \forall\, z\in \partial D_0.
\end{equation}
We shall now parametrize the domain with the outside conformal mapping  $\phi: \mathbb{D}^c\to D_0^c$. 
\begin{equation}\label{Eq45}
\phi(w)=w+\sum_{n\geq 0}\frac{b_n}{w^n}
\end{equation}
Setting $z=\phi(w)$ and $\zeta=\phi(\tau)$, then for $w\in \mathbb{T}$ a tangent vector is given by
$$
\overline{z'}=-i\overline{w}\, {\overline{\phi'(w)}}.
$$
Inserting this in the  equation \eqref{model0} gives
\begin{equation}\label{model}
G(\Omega,\phi)(w)\triangleq\textnormal{Im}\Bigg\{\bigg(\Omega\phi(w)-\frac{C_\alpha}{2i\pi}\mathop{{\int}}_\mathbb{T}\frac{\phi'(\tau)d\tau}{\vert \phi(w)-\phi(\tau)\vert^\alpha}\bigg)\, \overline{w}\,\overline{\phi'(w)}\Bigg\}=0,\quad\forall\,  w\in \mathbb{T}.
\end{equation}
This achieves the proof of the proposition.
\end{proof}

\section{Tools } 
\quad The purpose of this introductory  section  is to review and collect  some technical tools that will be used quite often in the remainder of this paper. We will firstly recall some basic elements of the bifurcation theory. We will focus on  the Crandall-Rabinowitz's theorem, hereafter referred by C-R Theorem, which is very crucial for the proof of our main result. Secondly, some simple facts about   H\"older spaces $C^{n+\gamma}(\mathbb{T})$ will be recalled  and  we shall also explore  some results on the action of singular integral operators on these spaces.  Last, we end this section with  some integral computations that will be frequently used in the study  of the linearized operator. 

  \subsection{Elements of the bifurcation theory}
 We intend now to  give some formal explanations and general principles of the bifurcation theory. This discussion will be closed by stating C-R theorem.  Roughly speaking, the main objective  of this theory is to look for the solutions of the equation
 $$
 F(\lambda,x)=0
 $$
 where $F:\RR\times X\to Y$ is continuous function and satisfies some additional regularity assumptions. The vector spaces  $X$ and $Y$ are   Banach spaces. We assume in addition \mbox{that $x=0$} is a  trivial solution for any $\lambda$, that is, $F(\lambda, 0)=0$. Whether close to the  solution  $(\lambda_0, 0)$ one  can find a branch of non trivial ones is the main problem discussed in this theory. If this is the case  we say that there is a bifurcation at the point $(\lambda_0, 0)$. As the Implicit Function Theorem tells us, the first test that should be carried out  is to analyze  the linear operator  $\mathcal{L}_\lambda\triangleq \partial_xF(\lambda, 0):X\to Y$. If this operator  is an isomorphism then such non trivial solutions cannot exist. Thus a necessary condition for the  bifurcation is to get  a nontrivial  kernel of $\mathcal{L}_\lambda$.  In many instances, the involved Banach spaces are infinite-dimensional  and thus the bifurcation analysis  is in general very complex. However, if the linearized operator is of Fredholm type one can reduce the problem to finite-dimensional spaces by using the  so-called Lyapunov-Schmidt reduction.  Recall that  a Fredholm operator means that it is continuous and whose kernel $N(\mathcal{L}_\lambda)$ and cokerel $Y/R(\mathcal{L}_\lambda)$ are finite-dimensional, \mbox{where $R(\mathcal{L}_\lambda)$} denotes the range of  $\mathcal{L}_\lambda$.  If moreover the index of this operator is zero then the bifurcation may occur despite that some suitable conditions are satisfied. Here we shall only discuss the bifurcation with one dimensional kernel which is the most common one and appears in many dynamical systems as for our generalized SQG model. With the preceding   assumptions on the linear operator  a one-parameter curve bifurcates from the trivial solution provided a transversality assumption is satisfied. Roughly speaking, this latter assumption  means that the linear operator $\mathcal{L}_\lambda$  possesses  a one-parameter eigenvalues $\lambda\mapsto \mu(\lambda)$ that should cross the real axis at $\lambda_0$ with non zero velocity. This is the classical theorem proved by  Crandall and Rabinowitz  \cite{CR} which is a basic tool in the bifurcation theory and that will be used in this paper. More general results are summarized in the book of Kielh\"{o}fer \cite{Kil}. Now we recall Crandall-Rabinowitz Theorem.

\begin{theorem}\label{C-R} Let $X, Y$ be two Banach spaces, $V$ a neighborhood of $0$ in $X$ and let 
$
F : \RR \times V \to Y
$
with the following  properties:
\begin{enumerate}
\item $F (\lambda, 0) = 0$ for any $\lambda\in \RR$.
\item The partial derivatives $F_\lambda$, $F_x$ and $F_{\lambda x}$ exist and are continuous.
\item $N(\mathcal{L}_0)$ and $Y/R(\mathcal{L}_0)$ are one-dimensional. 
\item {\it Transversality assumption}: $F_{tx}(0, 0)x_0 \not\in R(\mathcal{L}_0)$, where
$$
N(\mathcal{L}_0) = span\{x_0\}, \quad \mathcal{L}_0\triangleq \partial_x F(0,0).
$$
\end{enumerate}
If $Z$ is any complement of $N(\mathcal{L}_0)$ in $X$, then there is a neighborhood $U$ of $(0,0)$ in $\RR \times X$, an interval $(-a,a)$, and continuous functions $\varphi: (-a,a) \to \RR$, $\psi: (-a,a) \to Z$ such that $\varphi(0) = 0$, $\psi(0) = 0$ and
$$
F^{-1}(0)\cap U=\Big\{\big(\varphi(\xi), \xi x_0+\xi\psi(\xi)\big)\,;\,\vert \xi\vert<a\Big\}\cup\Big\{(\lambda,0)\,;\, (\lambda,0)\in U\Big\}.
$$
\end{theorem}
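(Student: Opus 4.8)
The plan is to prove this by the Lyapunov--Schmidt reduction, which collapses the infinite-dimensional equation $F(\lambda,x)=0$ into a single scalar equation to which the Implicit Function Theorem can be applied twice. First I would exploit the Fredholm structure supplied by hypothesis $(3)$: since $\mathcal{L}_0=\partial_xF(0,0)$ has one-dimensional kernel $N(\mathcal{L}_0)=\mathrm{span}\{x_0\}$ and one-dimensional cokernel, its range $R(\mathcal{L}_0)$ is closed and admits a one-dimensional topological complement $W$, so that $Y=R(\mathcal{L}_0)\oplus W$. Let $Q:Y\to R(\mathcal{L}_0)$ be the associated continuous projection, and let $X=N(\mathcal{L}_0)\oplus Z$ be the prescribed splitting. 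The crucial observation is that the restriction $\mathcal{L}_0|_Z:Z\to R(\mathcal{L}_0)$ is a continuous bijection, hence an isomorphism by the open mapping theorem.

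Writing $x=\xi x_0+z$ with $(\xi,z)\in\RR\times Z$, I would split $F(\lambda,x)=0$ into the auxiliary equation $QF(\lambda,\xi x_0+z)=0$ and the bifurcation equation $(I-Q)F(\lambda,\xi x_0+z)=0$. The auxiliary equation is solved for $z$ in terms of $(\lambda,\xi)$ by the Implicit Function Theorem: its partial derivative in $z$ at the origin is $Q\mathcal{L}_0|_Z=\mathcal{L}_0|_Z$, an isomorphism, while hypothesis $(2)$ provides the needed regularity. This yields a continuously differentiable map $z=z(\lambda,\xi)$ with $z(0,0)=0$, and by uniqueness $z(\lambda,0)=0$ for all $\lambda$ since $F(\lambda,0)=0$. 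Substituting back produces the reduced scalar equation $\Phi(\lambda,\xi)\triangleq(I-Q)F\big(\lambda,\xi x_0+z(\lambda,\xi)\big)=0$, valued in the one-dimensional space $W$.

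The heart of the argument is the analysis of $\Phi$. Because $z(\lambda,0)=0$ and $F(\lambda,0)=0$, one has $\Phi(\lambda,0)\equiv0$, so I would factor $\Phi(\lambda,\xi)=\xi\,\Psi(\lambda,\xi)$ with $\Psi(\lambda,\xi)=\int_0^1\partial_\xi\Phi(\lambda,t\xi)\,dt$ continuous. Using $\mathcal{L}_0x_0=0$ gives $\Psi(0,0)=\partial_\xi\Phi(0,0)=(I-Q)\mathcal{L}_0x_0=0$, expressing that the trivial branch is tangent to the putative bifurcation curve. The decisive step is $\partial_\lambda\Psi(0,0)\neq0$. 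Rather than differentiate $\Psi$ directly, which would seem to require $F_{xx}$, I evaluate along the trivial slice: differentiating the auxiliary equation in $\xi$ at $\xi=0$ shows $\mathcal{L}_\lambda[x_0+\zeta(\lambda)]\in W$, where $\zeta(\lambda)=\partial_\xi z(\lambda,0)$, whence $\Psi(\lambda,0)=\mathcal{L}_\lambda[x_0+\zeta(\lambda)]$ involves only $\mathcal{L}_\lambda=F_x(\lambda,0)$. Since $\zeta(0)\in N(\mathcal{L}_0)\cap Z=\{0\}$, differentiating once more in $\lambda$ and projecting off $R(\mathcal{L}_0)$ yields $\partial_\lambda\Psi(0,0)=(I-Q)F_{\lambda x}(0,0)x_0$, and the transversality hypothesis $(4)$ is exactly the statement that this element of the one-dimensional space $W$ is nonzero. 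I expect this regularity bookkeeping to be the main obstacle: because only $F_\lambda,F_x,F_{\lambda x}$ are assumed continuous, one must verify with care that $\Psi$ is continuously differentiable in $\lambda$---which is precisely what the continuity of $F_{\lambda x}$ supplies through the slice computation above---rather than naively invoking a nonexistent second derivative $F_{xx}$.

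Finally, with $\Psi(0,0)=0$ and $\partial_\lambda\Psi(0,0)\neq0$, the Implicit Function Theorem applied to $\Psi=0$ (after identifying $W\cong\RR$) produces an interval $(-a,a)$ and a continuous $\varphi:(-a,a)\to\RR$ with $\varphi(0)=0$ and $\Psi(\varphi(\xi),\xi)=0$. Setting $\psi(\xi)\triangleq z(\varphi(\xi),\xi)/\xi$, which extends continuously to $\xi=0$ with $\psi(0)=0$ because $z(\lambda,0)=0$ and $\zeta(0)=0$, the nontrivial zeros take the form $\big(\varphi(\xi),\,\xi x_0+\xi\psi(\xi)\big)$. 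The uniqueness built into the two applications of the Implicit Function Theorem then shows that, in a suitable neighborhood $U$ of $(0,0)$, the set $F^{-1}(0)\cap U$ consists of exactly this curve together with the trivial branch $\{(\lambda,0)\}$, which is the asserted local description.
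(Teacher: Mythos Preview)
The paper does not prove this theorem at all: it is stated as a classical tool and attributed to Crandall and Rabinowitz \cite{CR}, without any argument given. So there is no ``paper's own proof'' to compare against.

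That said, your outline via the Lyapunov--Schmidt reduction is exactly the standard route to this result, and it is essentially the argument of the original Crandall--Rabinowitz paper. Your identification of the main delicacy is accurate: under the minimal hypotheses $(2)$ one has continuity of $F_\lambda$, $F_x$, and $F_{\lambda x}$ but \emph{not} of $F_{xx}$, so one cannot blithely assert that the reduced function $\Psi$ is jointly $C^1$. Your device of restricting to the slice $\xi=0$, where $z(\lambda,0)\equiv 0$ and hence $\partial_\xi\Phi(\lambda,0)=(I-Q)\mathcal{L}_\lambda\big[x_0+\partial_\xi z(\lambda,0)\big]$ involves only $F_x$, correctly produces $\partial_\lambda\Psi(0,0)=(I-Q)F_{\lambda x}(0,0)x_0$ without invoking $F_{xx}$. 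The one point you should make fully explicit is why this suffices for the second application of the Implicit Function Theorem: one needs $\partial_\lambda\Psi$ to exist and be continuous in a full neighborhood of $(0,0)$, not merely at the origin. This is indeed the case, because $\partial_\lambda\Psi(\lambda,\xi)=\int_0^1\partial_\lambda\partial_\xi\Phi(\lambda,t\xi)\,dt$ and the integrand can be written, after differentiating the auxiliary equation, in terms of $F_x$, $F_{\lambda x}$, and the derivatives $\partial_\xi z$, $\partial_\lambda z$, $\partial_\lambda\partial_\xi z$ supplied by the first IFT; the continuity of $F_{\lambda x}$ is precisely what makes this work. With that bookkeeping filled in, your proof is complete and matches the original.
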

\subsection{Singular integrals}
In this paragraph we shall  briefly recall the classical H\"{o}lder spaces on the periodic case and state some classical facts on the continuity of singular integrals over these spaces.
 It is convenient to think of  $2\pi$-periodic function $f:\RR\to\CC$ as a function of the complex variable $w=e^{i\eta}$ rather than a function of the real variable $\eta.$ To be more precise, let  $f:\mathbb{T}\to \RR^2$, be a continuous function, then it  can be assimilated to  a $2\pi-$ periodic function $g:\RR\to\RR$ via the relation
$$
f(w)=g(\eta),\quad w=e^{i\eta}.
$$
Hence when  $f$ is  smooth enough we get
$$
f^\prime(w)\triangleq\frac{df}{dw}=-ie^{-i\eta}g'(\eta).
$$  
Because  $d/dw$ and $d/d\eta$ differ only by a smooth factor with modulus one  we shall  in the sequel work with $d/dw$ instead of $d/d\eta$ which appears to be  more convenient in the computations.\\
Moreover, if $f$ has real Fourier coefficients and is of class $C^1$ then  we have the identity 
\begin{equation}\label{deriv-bar}
{\{\overline{f}\}^\prime}(w)=-\frac{1}{w^2}\overline{f^\prime(w)}.
\end{equation}
Now we shall introduce  H\"older spaces  on the unit circle $\mathbb{T}$.
\begin{definition}
Let 
$0<\gamma<1$. We denote by $C^\gamma(\mathbb{T}) $  the space of continuous functions $f$ such that
$$
\Vert f\Vert_{C^\gamma(\mathbb{T})}\triangleq \Vert f\Vert_{L^\infty(\mathbb{T})}+\sup_{x\neq y\in \mathbb{T}}\frac{\vert f(x)-f(y)\vert}{\vert x-y\vert^\alpha}<\infty.
$$
For any integer $n$ the space $C^{n+\gamma}(\mathbb{T})$ stands for the set of functions $f$ of class $C^n$ whose $n-$th order derivatives are H\"older continuous  with exponent $\gamma$. It is equipped with the usual  norm,
$$
\Vert f\Vert_{C^{n+\gamma}(\mathbb{T})}\triangleq \Vert f\Vert_{L^\infty(\mathbb{T})}+\Big\Vert \frac{d^n f}{dw^n}\Big\Vert_{C^\gamma(\mathbb{T})}.
$$ 
\end{definition}
Recall that the Lipschitz (semi)-norm is defined as follows.
$$
\|f\|_{\textnormal{Lip}(\mathbb{T})}=\sup_{x\neq y}\frac{|f(x)-f(y)|}{|x-y|}.
$$
Now we list some classical properties that will be used later especially in Section \ref{Reg}.
\begin{enumerate}
\item For $n\in \mathbb{N}, \gamma\in ]0,1[$ the space $C^{n+\gamma}(\mathbb{T})$ is an algebra.
\item For $K\in L^1(\mathbb{T})$ and $f\in C^{n+\gamma}(\mathbb{T})$ we have the convolution law,
$$
\|K*f\|_{C^{n+\gamma}(\mathbb{T})}\le \|K\|_{L^1(\mathbb{T})}\|f\|_{C^{n+\gamma}(\mathbb{T})}.
$$

\end{enumerate}

The next result is used often. It deals with singular integrals of the following type,
\begin{equation}\label{opsin}
\mathcal{T}(f)(w)=\int_{\mathbb{T}}K(w,\tau)\, f(\tau)d\tau,
\end{equation}
with $K:\mathbb{T}\times\mathbb{T}\to \mathbb{C}$ a singular kernel satisfying some properties. This problem will appear naturally when we shall deal with the regularity of the nonlinear operator in the rotating patches formalism, see Section \ref{Reg}. 
 The result  that we shall discuss with respect to this  subject  is classical and for the self-containing of the paper  we shall provide a complete proof which is similar to \cite{MOV}. 
\begin{lemma}\label{noyau}
Let $0\leq \alpha< 1$ and consider a function $K:\mathbb{T}\times\mathbb{T}\to \mathbb{C}$ with the following properties. There exits $C_0>0$ such that,
\begin{enumerate}
\item $K$ is measurable on $\mathbb{T}\times\mathbb{T}\backslash\{(w,w),\, w\in \mathbb{T}\}$ and
$$
\big\vert K(w,\tau)\big\vert\leq \frac{C_0}{\vert w-\tau\vert^\alpha}, \quad\forall\,  w\neq \tau\in \mathbb{T}.
$$
\item For each $\tau\in \mathbb{T}$, $w\mapsto K(w,\tau)$ is differentiable in $\mathbb{T}\backslash\{\tau\}$ and 
$$
  \big\vert\partial_w K(w,\tau)\big\vert\leq \frac{C_0}{\vert w-\tau\vert^{1+\alpha}}, \quad \forall\, w\neq \tau\in \mathbb{T}.
$$
\end{enumerate}
Then the operator $\mathcal{T} $ defined by \eqref{opsin} is continuous from $L^\infty(\mathbb{T})$ to $C^{1-\alpha}(\mathbb{T})$. More precisely, there exists a constant $C_\alpha$ depending only on $\alpha$ such that
$$
\Vert \mathcal{T}(f)\Vert_{1-\alpha}\leq C_\alpha C_0\Vert f\Vert_{L^\infty}.
$$

\end{lemma}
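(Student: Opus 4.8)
The plan is to prove the two estimates separately: first the $L^\infty$ bound on $\mathcal{T}(f)$, then the $C^{1-\alpha}$ seminorm bound on $\mathcal{T}(f)$; together they give the claimed continuity. The $L^\infty$ bound is immediate from hypothesis (1): for any $w\in\mathbb{T}$,
\[
|\mathcal{T}(f)(w)|\leq \int_{\mathbb{T}}|K(w,\tau)|\,|f(\tau)|\,|d\tau|\leq C_0\|f\|_{L^\infty}\int_{\mathbb{T}}\frac{|d\tau|}{|w-\tau|^\alpha}\leq C_\alpha C_0\|f\|_{L^\infty},
\]
since $\alpha<1$ makes the kernel integrable on the circle and the integral is uniformly bounded in $w$ by compactness/translation invariance.

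The substance is the H\"older seminorm estimate. First I would fix two points $w_1,w_2\in\mathbb{T}$ and set $\delta=|w_1-w_2|$. The standard device is to split the circle into a "near" region $\mathbb{T}_{\textnormal{near}}=\{\tau\in\mathbb{T}:|\tau-w_1|\leq N\delta\}$ for a fixed absolute constant $N$ (say $N=3$), and its complement $\mathbb{T}_{\textnormal{far}}$, and to write
\[
\mathcal{T}(f)(w_1)-\mathcal{T}(f)(w_2)=\int_{\mathbb{T}_{\textnormal{near}}}\!\!\big(K(w_1,\tau)-K(w_2,\tau)\big)f(\tau)\,d\tau+\int_{\mathbb{T}_{\textnormal{far}}}\!\!\big(K(w_1,\tau)-K(w_2,\tau)\big)f(\tau)\,d\tau.
\]
On $\mathbb{T}_{\textnormal{near}}$ I would bound each kernel term separately using hypothesis (1): the contribution is at most $C_0\|f\|_{L^\infty}\int_{|\tau-w_1|\leq N\delta}\big(|w_1-\tau|^{-\alpha}+|w_2-\tau|^{-\alpha}\big)|d\tau|$, and since on this region $|w_2-\tau|\lesssim\delta$ as well, both integrals are $\lesssim \delta^{1-\alpha}$; this is exactly the desired order. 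On $\mathbb{T}_{\textnormal{far}}$ I would instead use the mean value theorem along the (shorter) arc joining $w_1$ to $w_2$: $|K(w_1,\tau)-K(w_2,\tau)|\leq \delta\sup_{w\in[w_1,w_2]}|\partial_w K(w,\tau)|\leq C_0\,\delta\,|w-\tau|^{-1-\alpha}$, where one checks that for $\tau\in\mathbb{T}_{\textnormal{far}}$ and $w$ on the arc one has $|w-\tau|\geq c|w_1-\tau|$ with $c$ absolute (this is where the choice $N=3$ enters — it keeps the arc away from $\tau$). Then the far contribution is $\lesssim C_0\|f\|_{L^\infty}\,\delta\int_{|\tau-w_1|\geq N\delta}|w_1-\tau|^{-1-\alpha}|d\tau|$, and the remaining integral is $\lesssim \delta^{-\alpha}$ (again using $\alpha>0$ for convergence of the tail, or more precisely $\alpha\geq 0$ with the circle being finite). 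Multiplying, the far part is also $\lesssim\delta^{1-\alpha}=|w_1-w_2|^{1-\alpha}$, which is precisely the $C^{1-\alpha}$ seminorm bound.

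Assembling the pieces, $|\mathcal{T}(f)(w_1)-\mathcal{T}(f)(w_2)|\leq C_\alpha C_0\|f\|_{L^\infty}|w_1-w_2|^{1-\alpha}$, and combined with the $L^\infty$ bound this yields $\|\mathcal{T}(f)\|_{1-\alpha}\leq C_\alpha C_0\|f\|_{L^\infty}$. The only delicate bookkeeping is the geometric comparison $|w-\tau|\gtrsim|w_1-\tau|$ on the far region for $w$ between $w_1$ and $w_2$, and the companion remark that the mean value theorem should be applied along the short sub-arc (so that $\delta$, the chord length, controls the arc length up to a constant); neither is hard but they are the points where one must be careful, and they are the reason the splitting constant must be chosen strictly larger than $1$. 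I expect this geometric step to be the main (minor) obstacle; everything else is the elementary integral $\int_{\mathbb{T}}|w-\tau|^{-s}|d\tau|$ being $O(1)$ for $s<1$ and the tail $\int_{|\tau-w_1|\geq N\delta}|w_1-\tau|^{-1-\alpha}|d\tau|=O(\delta^{-\alpha})$.
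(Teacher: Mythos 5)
Your proof is correct and follows essentially the same route as the paper's: the same $L^\infty$ bound from hypothesis (1), the same near/far splitting at scale comparable to $|w_1-w_2|$ with each kernel bounded separately near the diagonal, and the mean value theorem plus hypothesis (2) on the far region. The only difference is cosmetic (you split at $3\delta$ rather than $2\delta$, and you spell out the geometric comparison $|w-\tau|\gtrsim|w_1-\tau|$ along the connecting arc, which the paper leaves implicit).
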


\begin{proof}

We first prove that $\mathcal{T}(f)$ is bounded on $\mathbb{T}$ . Let $w\in\mathbb{T}$, then by  the condition (1),
\begin{eqnarray*}
\vert \mathcal{T}(f)(w)\vert & \leq & C_0\Vert f\Vert_{L^\infty}\Big\vert \int_{\mathbb{T}} \frac{d\tau}{\vert w-\tau\vert^\alpha}\Big\vert\\ &\leq & C_\alpha C_0\Vert f\Vert_{L^\infty}.
\end{eqnarray*}
Next, take $w_1,w_2\in \mathbb{T}$, set $r=\vert w_1-w_2\vert$ and define $B_{r}(w_1)=\big\{ \tau\in \mathbb{T};\, \vert \tau-w_1\vert \leq r\big\}$. Then,
\begin{eqnarray*}
\Big\vert \mathcal{T}(f)(w_1)-\mathcal{T}(f)(w_2)\Big\vert &\leq &\Big\vert \int _{B_{2r}(w_1)}\vert f(\tau)\vert \vert K(w_1,\tau)\vert d\tau\Big\vert+\Big\vert \int _{B_{2r}(w_1)}\vert f(\tau)\vert \vert K(w_2,\tau)\vert d\tau\Big\vert \\ &+&\Big\vert \int _{B^c_{2r}(w_1)}\vert f(\tau)\vert \vert K(w_1,\tau)-K(w_2,\tau)\vert d\tau\Big\vert\\ &\triangleq & J_1+J_2+J_3. 
\end{eqnarray*}
By using again the condition (1), $J_1$ and $J_2$ can be estimated by
\begin{eqnarray*}
J_1+J_2 &\leq & C_0\Vert f\Vert_{L^\infty}\Big(  \Big\vert \int _{B_{2r}(w_1)}\frac{d\tau}{\vert w_1-\tau\vert^\alpha}\Big\vert+\Big\vert \int _{B_{3r}(w_2)}\frac{d\tau}{\vert w_2-\tau\vert^\alpha}\Big\vert\Big)\\ &\leq &  C_\alpha C_0\Vert f\Vert_{L^\infty}\vert w_1-w_2\vert^{1-\alpha}.
\end{eqnarray*}
To estimate the third  term $J_3$ we shall use the condition (2) combined with the  Mean Value Theorem, 
$$
|K(w_1,\tau)-K(w_2,\tau)|\le C C_0\frac{|w_1-w_2|}{|w_1-\tau|^{1+\alpha}},\quad \forall \,\tau\in B_{2r}^c(w_1).
$$
Consequently we get
\begin{eqnarray*}
J_3 &\leq &C C_0\Vert f\Vert_{L^\infty} \Big\vert \int _{B^c_{2r}(w_1)}\frac{\vert w_1-w_2\vert}{\vert w_1-\tau\vert^{1+\alpha}}d\tau\\ & \leq & C_\alpha C_0\Vert f\Vert_{L^\infty}\vert w_1-w_2\vert^{1-\alpha}.
\end{eqnarray*}
This concludes the result.
\end{proof}
As a by-product we obtain the result.
\begin{coro}\label{cor}
Let $0\le \alpha<1$, $\phi:\mathbb{T}\to \phi(\mathbb{T})$ be a bi-Lipschitz function with real Fourier coefficients  and define the operator
$$
\mathcal{T}_\phi: f\mapsto\displaystyle{\mathop{{\fint}_{\mathbb{T}}}}\frac{f(\tau)}{\vert \phi(w)-\phi(\tau)\vert^\alpha}d\tau,\quad w\in \mathbb{T}.
$$
Then  $\mathcal{T}_\phi:L^\infty\big(\mathbb{T}\big)\to C^{1-\alpha}\big(\mathbb{T}\big)$ is continuous with the estimation,
$$
\Vert \mathcal{T}_\phi(f)\Vert_{C^{1-\alpha}(\mathbb{T})}\leq C\Big(\|\phi^{-1}\|_{\textnormal{Lip}(\mathbb{T})}^\alpha+\|\phi\|_{\textnormal{Lip}(\mathbb{T})}^2\|\phi^{-1}\|_{\textnormal{Lip}(\mathbb{T})}^{1+\alpha}\Big)\Vert f\Vert_{L^\infty(\mathbb{T})},
$$
where $C$ is a positive constant depending only on $\alpha$.
\end{coro}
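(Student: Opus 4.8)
The plan is to reduce the statement to Lemma~\ref{noyau} applied to the kernel
$$
K(w,\tau)\triangleq\frac{1}{2i\pi}\,\frac{1}{\vert\phi(w)-\phi(\tau)\vert^{\alpha}},\qquad w\neq\tau\in\mathbb{T},
$$
so that $\mathcal{T}_\phi(f)(w)=\fint_{\mathbb{T}}\frac{f(\tau)}{\vert\phi(w)-\phi(\tau)\vert^\alpha}d\tau=\int_{\mathbb{T}}K(w,\tau)f(\tau)d\tau$ is exactly of the form \eqref{opsin}. It then suffices to verify the two size conditions of Lemma~\ref{noyau} for $K$ with a constant $C_0$ controlled by $\|\phi\|_{\textnormal{Lip}(\mathbb{T})}$ and $\|\phi^{-1}\|_{\textnormal{Lip}(\mathbb{T})}$, and to read off the quantitative estimate from the conclusion of that lemma, keeping in mind that the $L^\infty$ part of the bound there only contributes the term $\|\phi^{-1}\|_{\textnormal{Lip}(\mathbb{T})}^{\alpha}$.

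For the first condition I would use that $\phi$ bi-Lipschitz gives the two-sided bound
$$
\|\phi^{-1}\|_{\textnormal{Lip}(\mathbb{T})}^{-1}\,\vert w-\tau\vert\leq\vert\phi(w)-\phi(\tau)\vert\leq\|\phi\|_{\textnormal{Lip}(\mathbb{T})}\,\vert w-\tau\vert,
$$
so raising the left inequality to the power $-\alpha$ yields $\vert K(w,\tau)\vert\leq C\,\|\phi^{-1}\|_{\textnormal{Lip}(\mathbb{T})}^{\alpha}\,\vert w-\tau\vert^{-\alpha}$. For the second condition I would differentiate the kernel: on $\mathbb{T}$ one has $\vert\phi(w)-\phi(\tau)\vert^{2}=\big(\phi(w)-\phi(\tau)\big)\,\overline{\big(\phi(w)-\phi(\tau)\big)}$, and since $\phi$ has real Fourier coefficients the identity \eqref{deriv-bar} applies to $\overline{\phi}$, giving
$$
\partial_w\vert\phi(w)-\phi(\tau)\vert^{-\alpha}=-\frac{\alpha}{2}\,\vert\phi(w)-\phi(\tau)\vert^{-\alpha-2}\Big(\phi'(w)\,\overline{\big(\phi(w)-\phi(\tau)\big)}-\frac{\overline{\phi'(w)}}{w^{2}}\,\big(\phi(w)-\phi(\tau)\big)\Big).
$$
Since $\vert\phi'(w)\vert\lesssim\|\phi\|_{\textnormal{Lip}(\mathbb{T})}$ on $\mathbb{T}$ (the derivative with respect to $w$ and with respect to the angle $\eta$ differ only by a modulus-one factor, and chord and arc length on $\mathbb{T}$ are comparable), the parenthesis is $O\big(\|\phi\|_{\textnormal{Lip}(\mathbb{T})}\vert\phi(w)-\phi(\tau)\vert\big)$; after one cancellation and a further use of the lower bi-Lipschitz bound this is a routine computation yielding $\vert\partial_w K(w,\tau)\vert\leq C_0\,\vert w-\tau\vert^{-1-\alpha}$ with $C_0=C\big(\|\phi^{-1}\|_{\textnormal{Lip}(\mathbb{T})}^{\alpha}+\|\phi\|_{\textnormal{Lip}(\mathbb{T})}^{2}\|\phi^{-1}\|_{\textnormal{Lip}(\mathbb{T})}^{1+\alpha}\big)$. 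Invoking Lemma~\ref{noyau} then gives $\mathcal{T}_\phi\in\mathcal{L}\big(L^\infty(\mathbb{T}),C^{1-\alpha}(\mathbb{T})\big)$ together with the asserted norm estimate.

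I do not expect a serious obstacle: the content is essentially bookkeeping. The one point requiring genuine care is the second kernel estimate, where, unlike the Euler kernel, $K(\cdot,\tau)$ is \emph{not} holomorphic in $w$, so one really needs \eqref{deriv-bar}, and hence the real-Fourier-coefficient hypothesis, to differentiate $\overline{\phi}$; one must also be careful to use only the Lipschitz bounds that are actually available on $\mathbb{T}$. A secondary caveat is that a merely bi-Lipschitz $\phi$ need not be everywhere differentiable, but the proof of Lemma~\ref{noyau} uses the derivative bound only through the Mean Value Theorem, so it is enough that $w\mapsto K(w,\tau)$ be Lipschitz on $\mathbb{T}\setminus\{\tau\}$ with the above constant, which the computation provides.
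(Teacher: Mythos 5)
Your proposal is correct and follows essentially the same route as the paper: verify the two hypotheses of Lemma~\ref{noyau} for the kernel $K(w,\tau)=|\phi(w)-\phi(\tau)|^{-\alpha}$, using the bi-Lipschitz bounds for the size condition and the identity \eqref{deriv-bar} (hence the real-Fourier-coefficient hypothesis) to differentiate the non-holomorphic kernel, exactly as in the paper's computation \eqref{for22}. Your closing remarks about where the real-coefficient assumption enters and about differentiability of a merely bi-Lipschitz $\phi$ are sensible but do not change the argument.
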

\begin{proof}
We set
$$
K(w,\tau)=\frac{1}{\vert \phi(w)-\phi(\tau)\vert^\alpha}, \quad \forall\,w\neq \tau\in \mathbb{T}.
$$
Since $\phi$ is bi-Lipschitz  then we deduce that
\begin{equation}\label{k}
\vert K(w,\tau)\vert \leq \|\phi^{-1}\|_{\textnormal{Lip}(\mathbb{T})}^\alpha \frac{1}{\vert w-\tau\vert^{\alpha}}\quad \,\forall w\neq \tau\in \mathbb{T}.
\end{equation}
To get the second assumption $(2)$ of Lemma \ref{noyau} we shall compute $\partial_w K(w,\tau)$.
\begin{eqnarray}\label{for22}
\nonumber\partial_w K(w,\tau)
\nonumber&=&\frac{-\alpha}{2}\bigg(\phi'(w)\frac{\overline{\phi(w)}-\overline{\phi(\tau)}}{\vert \phi(w)-\phi(\tau)\vert^{\alpha+2}}+{(\overline{\phi})^\prime(w)}\frac{\phi(w)-\phi(\tau)}{\vert \phi(w)-\phi(\tau)\vert^{\alpha+2}}\bigg)\\
&=&\frac{-\alpha}{2}\bigg(\phi'(w)\frac{\overline{\phi(w)}-\overline{\phi(\tau)}}{\vert \phi(w)-\phi(\tau)\vert^{2}}-\frac{\overline{\phi'(w)}}{w^2}\frac{\phi(w)-\phi(\tau)}{\vert \phi(w)-\phi(\tau)\vert^{2}}\bigg)K(w,\tau)\quad w\neq \tau\in \mathbb{T}.
\end{eqnarray}
We have used the fact that the Fourier coefficients of $\phi$ are real and therefore we can apply the identity  \eqref{deriv-bar}. It follows that,
\begin{eqnarray*}
\vert \partial_w K(w,\tau)\vert &\leq & C \|\phi\|_{\textnormal{Lip}(\mathbb{T})}^2\frac{1}{\vert \phi(w)-\phi(\tau)\vert^{\alpha+1}}\notag\\ &\leq &C \|\phi\|_{\textnormal{Lip}(\mathbb{T})}^2\|\phi^{-1}\|_{\textnormal{Lip}(\mathbb{T})}^{1+\alpha}\frac{ 1}{\vert w-\tau\vert^{\alpha+1}}\cdot
\end{eqnarray*}
We can conclude by  Lemma \ref{noyau} and get  the desired result.
\end{proof}
\subsection{Basic integrals}
This section presents some basic computations of few integrals that will appear later in the study of the linearized operator. But before going further into the details we shall recall some facts on the gamma function which emerges  in a natural way in our computations.  The  function  $\Gamma:\CC\backslash(-\NN)\to \CC$  refers to the gamma function which is the analytic continuation to the negative half plane of the usual gamma function defined on the positive half-plane $\big\{\hbox{Re}z>0\big\}$ by the integral representation
$$
\Gamma(z)=\int_{0}^{+\infty} t^{z-1}\, e^{-t}dt.
$$
It satisfies the relation
\begin{equation}\label{Gamma1}
\Gamma(z+1)=z\,\Gamma(z), \quad \forall z\in \CC \backslash(-\NN).
\end{equation}
 Note that this function does not vanish and its poles $\{-n, n\in \NN\}$ are simple and so the reciprocal gamma function $\frac{1}{\Gamma}$ is an entire function. There are some particular values of the gamma function that will be used later,
 \begin{equation}\label{for1}
 \Gamma(n+1)=n!,\quad \Gamma(1/2)=\sqrt{\pi}.
 \end{equation}
  Now we shall introduce another related function called  the digamma function which is nothing but the logarithmic derivative of the function gamma and often  denoted by $\digamma$. It is given by
 $$
 \digamma(x)=\frac{\Gamma^\prime(x)}{\Gamma(x)}\cdot 
 $$
 For a future use we need the following identity,
 \begin{equation}\label{digam}
\forall n\in \NN,\quad \digamma(n+\frac12)=-\gamma-2\ln2+2\sum_{k=0}^{n-1}\frac{1}{2k+1}\cdot
 \end{equation}
Now for $x\in\RR$  we denote by $(x)_n$ the Pokhhammer's symbol defined by

\begin{equation}\label{Poch}
(x)_n=\left\{ \begin{array}{ll}
x(x+1)...(x+n-1),\quad \hbox{if}\quad n\geq1, &\\
1,\quad \hbox{if}\quad n=0.
\end{array} \right.
\end{equation}
Note  that  in the literature the above notation is replaced by $(x)^n$ which can introduce in our  context a  lot of confusion with the power $x^n$ and for  this reason we prefer not to use it. 

It is obvious that
\begin{equation}\label{f1s}
(x)_n=x\,(1+x)_{n-1},\quad (x)_{n+1}=(x+n)\,(x)_n.
\end{equation}
From the identity \eqref{Gamma1} we deduce the relations
\begin{equation}\label{Poc}
(x)_n=\frac{\Gamma(x+n)}{\Gamma(x)},\quad (x)_n=(-1)^n\frac{\Gamma(1-x)}{\Gamma(1-x-n)},
\end{equation}
provided all the quantities in the right terms are well-defined.

In the sequel we shall prove the following lemma which is the main result of this section.
\begin{lemma}\label{lem} Let $n\in \NN$ and $\alpha\in (0,1)$. 
 Then for any $w\in \mathbb{T}$ we have the following formulae.
\begin{equation}\label{In}
\fint_{\mathbb{T}}\frac{\tau^n}{|\tau-w|^\alpha}d\tau=\frac{\Gamma(1-\alpha)}{\Gamma^2(1-\alpha/2)}\frac{\big(\frac\alpha2\big)_{n+1}}{(1-\frac\alpha2)_{n+1}}\,w^{n+1}\cdot
\end{equation}
\begin{equation}\label{Jn}
\fint_\mathbb{T}\frac{(w-\tau)(w^{n}-\tau^{n})}{\vert w-\tau\vert^{\alpha+2}}d\tau=\frac{\big(1+\frac\alpha2\big)\Gamma(1-\alpha)}{(2-\alpha)\Gamma^2(1-\frac\alpha2)}\Bigg(1- \frac{\big(2+\frac\alpha2\big)_{n}}{\big(2-\frac\alpha2\big)_{n}}\Bigg)w^{n+2}.
\end{equation}
\begin{equation}\label{Kn}
\fint_\mathbb{T}\frac{(\overline{w}-\overline{\tau})(\overline{w}^n-\overline{\tau}^{n})}{\vert 1-\tau\vert^{\alpha+2}}d\tau=- \frac{\Gamma(1-\alpha)}{2\Gamma^2(1-\frac\alpha2)}\Bigg(1-\frac{\big(\frac\alpha2\big)_{n}}{(-\frac\alpha2)_{n}} \Bigg)\overline{w}^n.
\end{equation}
\end{lemma}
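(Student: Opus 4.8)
The plan is to reduce all three identities to the single master formula \eqref{In}, and then derive \eqref{Jn} and \eqref{Kn} from it by elementary manipulations on the unit circle together with telescoping of ratios of Pochhammer symbols. Throughout, $|\,\cdot\,|^{-\alpha}$ is the positive real power of a positive real, and $\tau=e^{i\eta}$, $w=e^{i\theta}$, so that $d\tau=i\tau\,d\eta$, $\fint_{\mathbb{T}}f(\tau)\,d\tau=\tfrac1{2\pi}\int_{0}^{2\pi}f(e^{i\eta})e^{i\eta}\,d\eta$ (the extra $e^{i\eta}$ explains the shift $n\mapsto n+1$ in the exponent on the right of \eqref{In}), and $|\tau-w|=2\bigl|\sin\tfrac{\eta-\theta}{2}\bigr|$. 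To prove \eqref{In} I would write the integral in $\eta$, shift $\eta\mapsto\eta+\theta$ and use $2\pi$-periodicity to factor out $w^{n+1}$, and discard the imaginary part by evenness, reaching
\[
\fint_{\mathbb{T}}\frac{\tau^{n}}{|\tau-w|^{\alpha}}\,d\tau=\frac{w^{\,n+1}}{2^{\alpha}\pi}\int_{0}^{\pi}\frac{\cos\bigl(2(n+1)u\bigr)}{\sin^{\alpha}u}\,du .
\]
The last integral is classical: for every $k\in\NN$,
\[
\int_{0}^{\pi}\frac{\cos(2ku)}{\sin^{\alpha}u}\,du=\frac{(-1)^{k}\,\pi\,2^{\alpha}\,\Gamma(1-\alpha)}{\Gamma\!\bigl(1-\tfrac{\alpha}{2}+k\bigr)\,\Gamma\!\bigl(1-\tfrac{\alpha}{2}-k\bigr)} ,
\]
which I would quote from standard tables, or prove by expanding $\cos(2ku)=\textnormal{Re}\,e^{2iku}$, reducing to a Beta integral through $t=e^{2iu}$, and passing to this form with the duplication formula $\Gamma(z)\Gamma(z+\tfrac12)=2^{1-2z}\sqrt{\pi}\,\Gamma(2z)$ at $z=\tfrac{1-\alpha}{2}$. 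Putting $k=n+1$ and translating gamma functions into Pochhammer symbols via \eqref{Gamma1} and \eqref{Poc} — notably $(1-\tfrac{\alpha}{2})_{n+1}=\Gamma(2-\tfrac{\alpha}{2}+n)/\Gamma(1-\tfrac{\alpha}{2})$ and $(\tfrac{\alpha}{2})_{n+1}=(-1)^{n+1}\Gamma(1-\tfrac{\alpha}{2})/\Gamma(-\tfrac{\alpha}{2}-n)$ — gives exactly the right-hand side of \eqref{In}.

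For \eqref{Jn} I would use two elementary circle identities. First, $w^{n}-\tau^{n}=(w-\tau)\sum_{j=0}^{n-1}w^{\,n-1-j}\tau^{j}$. Second, since $\overline{\tau}=1/\tau$ and $\overline{w}=1/w$ on $\mathbb{T}$, one has $|w-\tau|^{2}=(w-\tau)(\overline{w}-\overline{\tau})=-\tfrac{1}{w\tau}(w-\tau)^{2}$, hence $\tfrac{(w-\tau)^{2}}{|w-\tau|^{\alpha+2}}=-\tfrac{w\tau}{|w-\tau|^{\alpha}}$. Multiplying these, the integrand becomes $-\sum_{j=0}^{n-1}w^{\,n-j}\tau^{\,j+1}\,|w-\tau|^{-\alpha}$, so \eqref{In} (with $n$ replaced by $j+1$) yields
\[
\fint_{\mathbb{T}}\frac{(w-\tau)(w^{n}-\tau^{n})}{|w-\tau|^{\alpha+2}}\,d\tau=-\frac{\Gamma(1-\alpha)}{\Gamma^{2}(1-\tfrac{\alpha}{2})}\,w^{\,n+2}\sum_{j=0}^{n-1}\frac{(\tfrac{\alpha}{2})_{j+2}}{(1-\tfrac{\alpha}{2})_{j+2}} .
\]
The sum telescopes: from $(\tfrac{\alpha}{2})_{j+2}=\tfrac{\alpha}{2}(1+\tfrac{\alpha}{2})(2+\tfrac{\alpha}{2})_{j}$, $(1-\tfrac{\alpha}{2})_{j+2}=(1-\tfrac{\alpha}{2})(2-\tfrac{\alpha}{2})_{j+1}$, and the identity $(2+\tfrac{\alpha}{2})_{j}/(2-\tfrac{\alpha}{2})_{j+1}=\tfrac1\alpha\bigl[(2+\tfrac{\alpha}{2})_{j+1}/(2-\tfrac{\alpha}{2})_{j+1}-(2+\tfrac{\alpha}{2})_{j}/(2-\tfrac{\alpha}{2})_{j}\bigr]$, one gets $\sum_{j=0}^{n-1}\tfrac{(\alpha/2)_{j+2}}{(1-\alpha/2)_{j+2}}=\tfrac{1+\alpha/2}{2-\alpha}\bigl(\tfrac{(2+\alpha/2)_{n}}{(2-\alpha/2)_{n}}-1\bigr)$, which is \eqref{Jn}.

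For \eqref{Kn} the argument is the mirror image in the conjugate variables. The substitution $\tau\mapsto w\tau$ shows $\fint_{\mathbb{T}}\tfrac{(\overline{w}-\overline{\tau})(\overline{w}^{n}-\overline{\tau}^{n})}{|w-\tau|^{\alpha+2}}\,d\tau=\overline{w}^{\,n}\,\fint_{\mathbb{T}}\tfrac{(1-\overline{\tau})(1-\overline{\tau}^{n})}{|1-\tau|^{\alpha+2}}\,d\tau$, so it suffices to handle the second integral (this is why only $|1-\tau|$ occurs). Writing $1-\overline{\tau}^{n}=(1-\overline{\tau})\sum_{j=0}^{n-1}\overline{\tau}^{\,j}$, and using $\tfrac{1-\overline{\tau}}{1-\tau}=-\overline{\tau}$ together with $|1-\tau|^{2}=(1-\tau)(1-\overline{\tau})$ on $\mathbb{T}$ to get $\tfrac{(1-\overline{\tau})^{2}}{|1-\tau|^{\alpha+2}}=\tfrac{-\overline{\tau}}{|1-\tau|^{\alpha}}$, the integrand collapses to $-\sum_{j=0}^{n-1}\overline{\tau}^{\,j+1}\,|1-\tau|^{-\alpha}$. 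One then needs the conjugate companion of \eqref{In} at $w=1$, namely $\fint_{\mathbb{T}}\tfrac{\overline{\tau}^{\,m}}{|1-\tau|^{\alpha}}\,d\tau=\tfrac{\Gamma(1-\alpha)}{\Gamma^{2}(1-\alpha/2)}\tfrac{(\alpha/2)_{m-1}}{(1-\alpha/2)_{m-1}}$ for $m\geq1$ — obtained exactly as \eqref{In}, or by conjugating \eqref{In} while bookkeeping the factor $\overline{d\tau}=-\overline{\tau}^{\,2}\,d\tau$ — and a telescoping through $b_{j}:=(\alpha/2)_{j}/(-\alpha/2)_{j}$, for which $b_{j+1}-b_{j}=-2(\alpha/2)_{j}/(1-\alpha/2)_{j}$; this gives $\sum_{j=0}^{n-1}(\alpha/2)_{j}/(1-\alpha/2)_{j}=\tfrac12\bigl(1-(\alpha/2)_{n}/(-\alpha/2)_{n}\bigr)$, and hence \eqref{Kn}.

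The only genuinely analytic ingredient — and the step I expect to require the most care — is the closed-form evaluation of $\int_{0}^{\pi}\cos(2ku)\,\sin^{-\alpha}u\,du$ in terms of gamma functions; everything after that is bookkeeping with the two circle identities above and elementary telescoping of Pochhammer ratios, the remaining subtleties being the branch convention for $|\,\cdot\,|^{-\alpha}$ and the $\tfrac{1}{2i\pi}$ normalization implicit in $\fint$, which is responsible for the exponent shifts in the three formulae.
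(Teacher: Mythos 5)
Your proposal is correct. For \eqref{In} you follow essentially the paper's route: reduce by $\tau=w\zeta$ to $\frac{w^{n+1}}{2^\alpha\pi}\int_0^\pi e^{2i(n+1)u}\sin^{-\alpha}u\,du$ and evaluate by the classical formula $\int_0^\pi\sin^x(\eta)e^{iy\eta}\,d\eta=\pi e^{i\pi y/2}\Gamma(x+1)/\big(2^x\Gamma(1+\tfrac{x+y}{2})\Gamma(1+\tfrac{x-y}{2})\big)$, which is exactly the identity the paper quotes from Magnus--Oberhettinger/Watson; your cosine version is just its real part (the sine part indeed vanishes by the symmetry $u\mapsto\pi-u$). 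For \eqref{Jn} and \eqref{Kn}, however, you take a genuinely different route. The paper stays in the trigonometric representation, keeps the singular factor $\sin^{-\alpha-1}$, and integrates by parts (checking that the boundary terms vanish for $\alpha<1$) to lower the exponent back to $\sin^{-\alpha}$, after which \eqref{In} applies. You instead perform the reduction algebraically on the circle: the factorization $w^n-\tau^n=(w-\tau)\sum_j w^{n-1-j}\tau^j$ together with $(w-\tau)^2=-w\tau\,|w-\tau|^2$ collapses each integrand to a finite sum of integrals already covered by \eqref{In}, and the remaining Pochhammer sums telescope (I checked both telescoping identities, $(2+\tfrac\alpha2)_j/(2-\tfrac\alpha2)_{j+1}=\tfrac1\alpha\big[(2+\tfrac\alpha2)_{j+1}/(2-\tfrac\alpha2)_{j+1}-(2+\tfrac\alpha2)_j/(2-\tfrac\alpha2)_j\big]$ and $b_{j+1}-b_j=-2(\tfrac\alpha2)_j/(1-\tfrac\alpha2)_j$, and the constants come out right). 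Your version buys a purely algebraic derivation of \eqref{Jn} and \eqref{Kn} from \eqref{In} with no further singular integrals and no boundary-term discussion; the paper's integration by parts is shorter to write but needs the observation that the boundary contributions vanish only because $\alpha<1$. One incidental point in your favor: you silently read the denominator in \eqref{Kn} as $|w-\tau|^{\alpha+2}$ rather than the printed $|1-\tau|^{\alpha+2}$, which is what the paper's own proof and its later application in the computation of $\hbox{I}_5$ actually require, so this is the correct reading of a typo in the statement.
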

\begin{proof}
We start with  the change of variables $\tau=w \zeta$, 
\begin{eqnarray*}
\fint_{\mathbb{T}}\frac{\tau^n}{|\tau-w|^\alpha}d\tau&=& w^{n+1}\fint_{\mathbb{T}}\frac{\zeta^n}{|\zeta-1|^\alpha}d\tau\\
&=&w^{n+1}\frac{1}{2^{\alpha+1}\pi}\int_{0}^{2\pi}\frac{e^{i(n+1)\eta}}{|\sin(\eta/2)|^\alpha}d\eta.
\end{eqnarray*}
Again by the change of variables $\eta/2\mapsto\eta$   one gets 
\begin{eqnarray*}
\fint_{\mathbb{T}}\frac{\tau^n}{|\tau-w|^\alpha}d\tau
&=&w^{n+1}\frac{1}{2^{\alpha}\pi}\int_{0}^{\pi}\frac{e^{2i(n+1)\eta}}{\sin^\alpha\eta}d\eta.
\end{eqnarray*}
We shall now recall the following identity, see for instance \cite[p.8]{M-O} and \cite[p.449]{W}.  

\begin{equation}\label{lem00}
\int_{0}^{\pi}\sin^{x}(\eta) e^{iy\eta}d\eta=\frac{\pi e^{i\frac{\pi y}{2}}\Gamma(x+1)}{2^{x}\Gamma(1+\frac{x+y}{2})\Gamma(1+\frac{x-y}{2})},\quad \forall\, x>-1,\quad\forall\,  y\in \RR.
\end{equation}
As it  was pointed before the gamma function  has no real zeros  but simple poles located at $-\NN$ and therefore   the function $\frac{1}{\Gamma}$ admits an analytic continuation on $\CC.$ Apply this formula  with $x=-\alpha$ and $y=2(n+1)$ yields,
\begin{equation}\label{imp}
\frac{1}{2^\alpha\pi}\int_{0}^{\pi}\frac{e^{2i(n+1)\eta}}{\sin^\alpha\eta}d\eta=\frac{(-1)^{n+1}\Gamma(1-\alpha)}{\Gamma(n+2-\frac\alpha2)\Gamma(-n-\frac\alpha2)}\cdot
\end{equation}
It is easy to see that from the  relations \eqref{Poc} we may write for any $n\in \NN,$
\begin{eqnarray*}
\Gamma(1+n-\alpha/2)&=&\Gamma(1-\alpha/2)\,\Big(1-\frac\alpha2\Big)_n\\
\Gamma(1-n-\alpha/2)&=&(-1)^n\frac{\Gamma(1-\alpha/2)}{\Big(\frac\alpha2\Big)_n}.
\end{eqnarray*}
It follows that
$$
\Gamma(1-n-\alpha/2)\Gamma(1+n-\alpha/2)=(-1)^n\Gamma^2\big(1-\frac\alpha2\big)\frac{\Big(1-\frac\alpha2\Big)_n}{\Big(\frac\alpha2\Big)_n}\cdot
$$
By replacing $n$ with $n+1$ we get
$$
\Gamma(-n-\alpha/2)\Gamma(2+n-\alpha/2)=(-1)^{n+1}\Gamma^2\big(1-\frac\alpha2\big)\frac{\Big(1-\frac\alpha2\Big)_{n+1}}{\Big(\frac\alpha2\Big)_{n+1}}\cdot
$$
Inserting this identity into \eqref{imp} gives
\begin{equation}\label{ineq001}
\frac{1}{2^\alpha\pi}\int_{0}^{\pi}\frac{e^{2i(n+1)\theta}}{\sin^\alpha\theta}d\theta=\frac{\Gamma(1-\alpha)}{\Gamma^2(1-\frac\alpha2)}\frac{\Big(\frac\alpha2\Big)_{n+1}}{\Big(1-\frac\alpha2\Big)_{n+1}}\cdot
\end{equation}
Consequently
\begin{eqnarray*}
\fint_{\mathbb{T}}\frac{\tau^n}{|\tau-w|^\alpha}d\tau
&=&\frac{\Gamma(1-\alpha)}{\Gamma^2(1-\frac\alpha2)}\frac{\Big(\frac\alpha2\Big)_{n+1}}{\Big(1-\frac\alpha2\Big)_{n+1}}\, w^{n+1}
\cdot
\end{eqnarray*}
This completes the proof of \eqref{In}.

We intend now to compute the second  integral. To this end we use a change of variable as before,
\begin{eqnarray*}
J_n &\triangleq & \fint_\mathbb{T}\frac{(w-\zeta)(w^n-\zeta^{n})}{\vert w-\zeta\vert^{\alpha+2}}d\zeta =w^{n+2}\fint_\mathbb{T}\frac{(1-\zeta)(1-\zeta^{n})}{\vert 1-\zeta\vert^{\alpha+2}}d\zeta.
\end{eqnarray*}
Using once again   the change of variables $\zeta\mapsto e^{i\eta}$ and $\eta\mapsto 2\eta$ one gets
\begin{eqnarray*}
J_n&=& 
\frac{w^{n+2}}{2\pi}\int_0^{2\pi}\frac{(1-e^{i\eta})(1-e^{in\eta})e^{i\eta}}{2^{\alpha+2}\vert\sin (\eta/2)\vert^{\alpha+2}}d\eta\notag\\ &=& \frac{w^{n+2}}{2^{\alpha+2}\pi}\int_0^{\pi}\frac{(1-e^{i2\theta})(1-e^{i2n\theta})e^{i2\theta}}{\big(\sin \theta\big)^{\alpha+2}}d\theta.\\
\end{eqnarray*}
Observe  that
$$
J_n=\frac{w^{n+2}}{2^{\alpha+1}i\pi}\int_{0}^\pi\frac{(e^{2i\theta}-e^{i2(n+1)\theta})e^{i\theta}}{\sin^{\alpha+1} \theta}d\theta
$$
and therefore
\begin{eqnarray*}
 J_n&=& \frac{w^{n+2}}{2^{\alpha+1}i\pi}\bigg(\int_0^{\pi}{\Big(e^{i2\theta}-e^{i2(n+1)\theta}\Big)}\frac{\cos\theta}{\sin^{\alpha+1} \theta}d\theta+i\int_0^{\pi}\frac{e^{i2\theta}-e^{i2(n+1)\theta}}{\sin^\alpha \theta}d\theta\bigg).
\end{eqnarray*}
Integrating by parts implies
$$
\int_0^{\pi}{\Big(e^{i2\theta}-e^{i2(n+1)\theta}\Big)}\frac{\cos\theta}{\sin^{\alpha+1} \theta}d\theta=\frac{2i}{\alpha}\int_0^{\pi}\frac{\big(e^{i2\theta}-(n+1)e^{i2(n+1)\theta}\big)}{\sin^\alpha \theta}d\theta.
$$
Note  that in this formula the contribution coming from the boundary terms is zero \mbox{for $\alpha\in [0,1[$.} Hence we get
$$
J_n= \frac{w^{n+2}}{2^{\alpha+1}\pi}\bigg(\frac{2+\alpha}{\alpha}\int_0^{\pi}\frac{e^{i2\theta}}{\sin^\alpha \theta}d\theta-\frac{2(n+1)+\alpha}{\alpha}\int_0^{\pi}\frac{e^{i2(n+1)\theta}}{\sin^\alpha \theta}d\theta\bigg).
$$
Combining this formula with the  identity \eqref{ineq001} gives
\begin{eqnarray*}
J_n&=& w^{n+2}\frac{(2+\alpha)\Gamma(1-\alpha)}{2(2-\alpha)\Gamma^2(1-\frac\alpha2)}-\frac{\Gamma(1-\alpha)}{\Gamma^2(1-\frac\alpha2)}\frac{2n+2+\alpha}{2\alpha}\frac{\big(\frac\alpha2\big)_{n+1}}{\big(1-\frac\alpha2\big)_{n+1}}\\
&=&w^{n+2}\frac{(1+\frac\alpha2)\Gamma(1-\alpha)}{(2-\alpha)\Gamma^2(1-\frac\alpha2)}\bigg(1- \frac{1-\frac\alpha2}{1+\frac\alpha2}\frac{n+1+\frac{\alpha}{2}}{\frac\alpha2}\,\frac{\big(\frac\alpha2\big)_{n+1}}{\big(1-\frac\alpha2\big)_{n+1}} \bigg).
\end{eqnarray*}
By \eqref{f1s} we may  transform this formula into,
$$
J_n=w^{n+2}\frac{(1+\frac\alpha2)\Gamma(1-\alpha)}{(2-\alpha)\Gamma^2(1-\frac\alpha2)}\bigg(1-\,\frac{\big(2+\frac\alpha2\big)_{n}}{\big(2-\frac\alpha2\big)_{n}} \bigg).
$$
Next we shall now move to the computation of the last  integral \eqref{Kn},
$$
Z_n\triangleq  \fint_\mathbb{T}\frac{(\overline{w}-\overline{\tau})(\overline{w}^n-\overline{\tau}^{n})}{\vert 1-\tau\vert^{\alpha+2}}d\tau =\overline{w}^{n} \fint_\mathbb{T}\frac{(1-\overline{\zeta})(1-\overline{\zeta}^{n})}{\vert 1-\zeta\vert^{\alpha+2}}d\zeta.
$$
Making a  standard change of variables as for the preceding integral we obtain
\begin{eqnarray*}
Z_n&=&\frac{\overline{w}^{n}}{2\pi}\int_0^{2\pi}\frac{(1-e^{-i\eta})(1-e^{-in\eta})e^{i\eta}}{2^{\alpha+2}\vert\sin (\eta/2)\vert^{\alpha+2}}d\eta\\ &=&\frac{\overline{w}^{n}}{2^{\alpha+2}\pi} \int_0^{\pi}\frac{(1-e^{-i2\eta})(1-e^{-i2n\eta})e^{i2\eta}}{\sin^{\alpha+2}\eta}d\eta\\ &=&
  \frac{i\overline{w}^{n}}{2^{\alpha+1}\pi}\int_0^{\pi}\frac{(1-e^{-i2n\theta})e^{i\theta}}{\sin^{\alpha+1}\theta}d\theta\\ &=&  \frac{i\overline{w}^{n}}{2^{\alpha+1}\pi}\bigg(\int_0^{\pi}\frac{\cos\theta(1-e^{-i2n\theta})}{\sin^{\alpha+1}\theta}d\theta+i\int_0^{\pi}\frac{(1-e^{-i2n\theta})}{\sin^{\alpha}\theta}d\theta\bigg).
\end{eqnarray*}
Integrating by parts gives
\begin{eqnarray*}
\int_0^{\pi}\frac{\cos\theta(1-e^{-i2n\theta})}{\sin^{\alpha+1}\theta}d\theta=\frac{2i n}{\alpha}\int_0^{\pi}\frac{e^{-i2n\theta}}{\sin^\alpha \theta}d\theta.
\end{eqnarray*}
This implies that
\begin{eqnarray*}
Z_n&=&-\frac{\overline{w}^{n}}{2^{\alpha+1}\pi}\bigg(\int_0^{\pi}\frac{1}{\sin^\alpha \theta}d\theta+\frac{2n-\alpha}{\alpha}\int_0^{\pi}\frac{e^{-i2n\theta}}{\sin^{\alpha}\theta}d\theta\bigg).
\end{eqnarray*}
Using once again  \eqref{ineq001} and \eqref{f1s} we obtain
\begin{eqnarray*}
Z_n&=&-\overline{w}^{n}\frac{\Gamma(1-\alpha)}{2\Gamma^2(1-\frac\alpha2)}\bigg(1+\frac{n-\frac\alpha2}{\frac\alpha2}\,\frac{(\frac\alpha2)_n)}{(1-\frac\alpha2)_n}\bigg)\\
&=&-\overline{w}^{n}\frac{\Gamma(1-\alpha)}{2\Gamma^2(1-\frac\alpha2)}\bigg(1-\frac{(\frac\alpha2)_n}{(-\frac\alpha2)_n}\bigg).
\end{eqnarray*}
Note  that we have used the following fact which can be deduced easily  from \eqref{ineq001} by conjugation, 
\begin{eqnarray*}
\int_0^{\pi}\frac{e^{-i2n\theta}}{\sin^{\alpha}\theta}d\theta&=&\int_0^{\pi}\frac{e^{i2n\theta}}{\sin^{\alpha}\theta}d\theta
\end{eqnarray*}
and therefore  the proof of the lemma is now completed.
\end{proof}

\section{Elliptic  patches}
Given a simply connected domain,  to check whether or not it is a rotating patch can be done  through  the equation of Proposition \ref{prop-bound} provided  that  a parametric representation of the boundary  is known (for example the one given by the conformal mapping ) and the  computations of the the integral term are feasible.  In what follows we shall concretize this program for  
 some elementary domains. We shall prove that the ellipses  never rotate except for the degenerate case where they coincide with discs. We point out this result was recently shown  in \cite{Cor} and we will give here a flexible  proof with less computations.
\begin{proposition}\label{pro-el}
The following holds true
\begin{enumerate}
\item The discs are rotating patches for any $\Omega\in \RR.$ 
\item The ellipses are not rotating patches.

\end{enumerate}
\end{proposition}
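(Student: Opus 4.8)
The plan is to treat the two claims in turn, the second being the substantial one.

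For claim (1) I would argue by symmetry: for a disc of radius $r$ centred at the origin, $\phi(w)=rw$ in the normalization \eqref{Eq45}, the vorticity $\chi_D$ is radial, so \eqref{veltu} makes $u_0(z)$ tangent to $\partial D$ at each $z$; since $z'$ is tangent and $z\perp z'$ on a circle, both sides of \eqref{rotsq1} vanish for every $\Omega\in\RR$, and every disc is a rotating patch at any angular velocity. Equivalently, substituting $\phi(w)=rw$ into \eqref{Rotaeq} and using \eqref{In} of Lemma~\ref{lem} with $n=0$, the integral is $c_0\,r^{1-\alpha}w$ with $c_0\in\RR$, the bracket is a real multiple of $w$, and after multiplying by $\overline w\,\overline{\phi'(w)}=r\overline w$ one is left with a real number, whose imaginary part is $0$ for every $\Omega$.

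For claim (2) I would first reduce: by Corollary~\ref{centremass} a rotating ellipse rotates about its centre, so after translating that centre to the origin, rotating the axes onto the coordinate axes, and factoring out the dilation (harmless by Proposition~\ref{pro-dila}), the exterior map is $\phi_b(w)=w+b/w$, $b\in(0,1)$, with $b=0$ the disc. The key observation is the factorization $\phi_b(w)-\phi_b(\tau)=(w-\tau)\big(1-b(w\tau)^{-1}\big)$, so that on $\mathbb{T}$
$$|\phi_b(w)-\phi_b(\tau)|^{-\alpha}=|w-\tau|^{-\alpha}\,\big(1-b(w\tau)^{-1}\big)^{-\alpha/2}\big(1-b\,w\tau\big)^{-\alpha/2}.$$
I would expand the last two factors (and $\phi_b'(\tau)=1-b\tau^{-2}$) by the binomial/Pochhammer series, insert into $G(\Omega,\phi_b)$, and integrate term by term: every integral that appears is of the form $\fint_{\mathbb{T}}\tau^{n}|w-\tau|^{-\alpha}d\tau$, which by the scaling $\tau=w\zeta$ equals $c_n\,w^{\,n+1}$ with $c_n\in\RR$ furnished by \eqref{In} for $n\ge 0$ (and by the same one–variable integral, or the relation $c_{-n}=c_{n-2}$, for $n<0$). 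This presents $G(\Omega,\phi_b)(e^{i\theta})$ as a convergent power series in $b$ with trigonometric–polynomial coefficients, and the reflection symmetries of the ellipse leave only the frequencies $\sin(2k\theta)$, say $G(\Omega,\phi_b)(e^{i\theta})=\sum_{k\ge1}g_k(\Omega,b)\sin(2k\theta)$.

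The conclusion I aim for: since $\textnormal{Im}\{\Omega\,\phi_b(w)\,\overline w\,\overline{\phi_b'(w)}\}=-2b\,\Omega\sin2\theta$ feeds only $g_1$, for every $k\ge2$ the coefficient $g_k$ is $-C_\alpha$ times the $\sin(2k\theta)$–coefficient of $\textnormal{Im}\{I(w)(\overline w-bw)\}$, $I(w)\triangleq\fint_{\mathbb{T}}\phi_b'(\tau)|\phi_b(w)-\phi_b(\tau)|^{-\alpha}d\tau$, and is independent of $\Omega$. Reading off the lowest power of $b$ (only $c_n$ with $n\le2$ enter) should give
$$g_2(\Omega,b)=\frac{5\,C_\alpha\,\Gamma(1-\alpha)}{\Gamma^{2}(1-\tfrac{\alpha}{2})}\,\frac{(\tfrac{\alpha}{2})^{2}(1+\tfrac{\alpha}{2})}{(2-\tfrac{\alpha}{2})(3-\tfrac{\alpha}{2})}\,b^{2}+O(b^{4}),$$
whose leading coefficient is visibly nonzero (indeed strictly positive) for every $\alpha\in(0,1)$. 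Since \eqref{Rotaeq} forces every $g_k\equiv0$, this already rules out all ellipses close to the disc, $g_1=0$ merely pinning $\Omega$ to $b$. The hard part will be to extend this to the full range $b\in(0,1)$: summing the series expresses $g_2$ as a combination of Gauss hypergeometric functions ${}_2F_1$ of argument $b^{2}$ (convergent even at $b^2=1$, the parameter excess being $1-\alpha>0$), and one would need either a contiguous–relation identity collapsing that combination to a single term of fixed sign, or a direct sign estimate of the underlying double integral on $\mathbb{T}\times\mathbb{T}$; absent that, the general statement may be quoted from \cite{Cor}.
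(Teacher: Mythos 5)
Your treatment of claim (1) is correct and coincides with the paper's: substitute the identity (or $rw$) into the boundary equation and invoke \eqref{In} with $n=0$ to see the whole expression is real.

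For claim (2) there is a genuine gap. Your setup is essentially the paper's: the normalization $\phi_b(w)=w+b/w$ is exactly the paper's $\phi_Q(w)=w+Q\overline w$, and your factorization $|\phi_b(w)-\phi_b(\tau)|^{-\alpha}=|w-\tau|^{-\alpha}\bigl(1-b\,\overline{w\tau}\bigr)^{-\alpha/2}\bigl(1-b\,w\tau\bigr)^{-\alpha/2}$ is equivalent to the identity $\vert z+Q\overline z\vert^2=(1+Q^2)\vert z\vert^2+2Q\,\textnormal{Re}(z^2)$ used there, and the observation that the $\Omega$-term feeds only the frequency $\sin 2\theta$ is also the paper's. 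But the argument you actually carry out only shows that the $\sin 4\theta$ coefficient is nonzero to leading order in $b$, i.e.\ it excludes ellipses of small eccentricity, whereas the proposition asserts that \emph{no} ellipse rotates; you acknowledge yourself that the extension to all $b\in(0,1)$ (summing the hypergeometric series and controlling its sign) is not done. The paper closes precisely this gap by a structural remark that makes the summation unnecessary: after reducing the bracket to $2Q\Omega w^2+C_\alpha\frac{1-Q^2}{(1+Q^2)^{\alpha/2}}J(w)$ with
$$J(w)=\fint_{\mathbb T}\frac{d\zeta}{\vert1-\zeta\vert^{\alpha}\bigl(1-\tfrac{2Q}{1+Q^2}\textnormal{Re}\{w^{2}\zeta\}\bigr)^{\alpha/2}},$$
the binomial expansion of $\bigl(\textnormal{Re}\{w^2\zeta\}\bigr)^n$ shows that the coefficients of $w^{4}$ and of $\overline w^{4}$ in $J$ are $\sum_{k\ge1}A_{2k}\binom{2k}{k-1}a_2$ and $\sum_{k\ge1}A_{2k}\binom{2k}{k-1}a_0$ respectively, with $a_j=\fint_{\mathbb T}\zeta^{j}\vert1-\zeta\vert^{-\alpha}d\zeta$: the \emph{same} positive series multiplies $a_2$ in one case and $a_0$ in the other. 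Hence their ratio is exactly $a_2/a_0=\frac{(2+\alpha)(4+\alpha)}{(4-\alpha)(6-\alpha)}\neq1$ for every $Q\in(0,1)$, the $\sin4\theta$ coefficient never vanishes, and no sign analysis of a ${}_2F_1$ is needed. If you reorganize your term-by-term integration so as to isolate this common factor (your relation $c_{-n}=c_{n-2}$ is exactly what makes the two series identical up to $a_2$ versus $a_0$), your approach closes to a complete proof; as written, it does not establish the statement for general ellipses.
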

\begin{proof}

${\bf(1)}$ Recall from \eqref{model} that the conformal mapping of a rotating domain must satisfy the equation
$$
G(\Omega,\phi(w))=0,\quad\forall w\in \mathbb{T}
$$
To check whether or not the unit disc is a solution, it suffices to prove that
$$
G(\Omega, \textnormal{Id})=0.
$$
It is easy to see that, 
 \begin{eqnarray*}
G(\Omega,\textnormal{Id})(w)&=&\textnormal{Im}\bigg\{\Big(\Omega w-C_\alpha\fint_\mathbb{T}\frac{d\tau}{\vert w-\tau\vert^\alpha}\Big)\frac{1}{w}\bigg\}\\
 &=& -C_\alpha\textnormal{Im}\bigg\{\fint_\mathbb{T}\frac{d\tau}{w\vert w-\tau\vert^\alpha}\bigg\}.
\end{eqnarray*} 
Using the the formula \eqref{In} with $n=0$ we may conclude tha for any $\Omega\in \RR,$
$$
G(\Omega,\textnormal{Id})=0.
$$
We observe  that this result is known and expected because  the disc corresponds to  a stationary solution for \eqref{E} and is invariant by rotation. 
\vspace{0,2cm}

${\bf{(2)}}$ By translation, dilation and rotation we can assume that the ellipse $\mathcal{E}$ is parametrized by the conformal mapping
 $$\phi_Q: w\in \mathbb{T}\mapsto w+Q \overline{w},\quad \textnormal{with}\quad  Q=\frac{a-b}{a+b}\in (0,1)
 $$ where $a$ and $b$ denote the major and minor axes, respectively. This map sends conformally the exterior of the unit disc to the exterior of the ellipse. Performing straightforward computations leads in view of \eqref{model} to 
 \begin{eqnarray*}
G\big(\Omega,\phi_Q\big)(w)
 &=&-\textnormal{Im}\bigg\{2Q\Omega w^2+C_\alpha\big(\overline{w}-Qw\big)\fint_\mathbb{T}\frac{\big(1-Q\overline{\tau}^2\big)d\tau}{\vert w-\tau+Q(\overline{w}-\overline{\tau})\vert^\alpha}\bigg\}.
\end{eqnarray*}
By using the identity
$$
\big\vert z+Q\overline{z}\big\vert^2=(1+Q^2)|z|^2+2Q\,\textnormal{Re}(z^2),\quad\forall z\in \CC,
$$
one gets
 \begin{eqnarray*}
G\big(\Omega,\phi_Q\big)(w)=-\textnormal{Im}\bigg\{2Q\Omega w^2+C_\alpha\big(\overline{w}-Qw\big) \mathop{{\fint}}_\mathbb{T}\frac{\big(1-Q\overline{\tau}^2\big)d\tau}{\Big[(1+Q^2)\vert w-\tau\vert^2+2Q\textnormal{Re}\big\{(w-\tau)^2\big\}\Big]^{\alpha/2}}\bigg\}.
\end{eqnarray*}
Making  the change of variables $\tau=w\zeta$ and using the identity 
 $$\big(1-z\big)^2=-z\vert 1-z\vert^2,\quad\forall\, z\in \mathbb{T}
 $$ we find
 \begin{eqnarray*}
G\big(\Omega, \phi_Q\big)(w)=
-\textnormal{Im}\bigg\{2Q\Omega w^2+\frac{C_\alpha}{(1+Q^2)^{\frac\alpha2}}\big(1-Qw^2\big)\mathop{{\fint}}_\mathbb{T}\frac{\big(1-Q\overline{w}^2\overline{\zeta}^2\big)d\zeta}{\vert 1-\zeta\vert^\alpha\Big[1-\frac{2Q}{1+Q^2}\textnormal{Re}\{w^2\zeta\}\Big]^{\alpha/2}}\bigg\}.
\end{eqnarray*}
We shall transform the last  integral term as follows,
 \begin{eqnarray*}
\mathop{{\fint}}_\mathbb{T}\frac{1-Q\overline{w}^2\overline{\zeta}^2}{\vert 1-\zeta\vert^\alpha\Big[1-\frac{2Q}{1+Q^2}\textnormal{Re}\{w^2\zeta\}\Big]^{\alpha/2}}d\zeta=J(w)-Q\overline{w}^2\overline{J(w)},
 \end{eqnarray*}
with
$$
J(w)\triangleq\displaystyle{\mathop{{\fint}}_\mathbb{T}}\frac{d\zeta}{\vert 1-\zeta\vert^\alpha\big(1-\frac{2Q}{1+Q^2}\textnormal{Re}\{w^2\zeta\}\big)^{\alpha/2}}\cdot$$
Therefore we get,
 \begin{eqnarray}\label{brak}
\nonumber G\big(\Omega, \phi_Q\big)(w) &=& -\textnormal{Im}\bigg\{2Q\Omega w^2+\frac{C_\alpha}{(1+Q^2)^{\frac\alpha2}}\Big(J(w)+Q^2\overline{J(w)}-Q\Big[w^2J(w)+\overline{w}^2\overline{J(w)}\Big]\Big)\bigg\}\\ &=&
-\textnormal{Im}\Big\{2Q\Omega w^2+C_\alpha\frac{1-Q^2}{(1+Q^2)^{\frac\alpha2}}J(w)\Big\}.
\end{eqnarray}
Since $\big|\frac{2Q}{1+Q^2}\textnormal{Re}\{w^2\zeta\}\big|<1$ then  we can use  the   Taylor series 
\begin{eqnarray*}
\Big(1-\frac{2Q}{1+Q^2}\textnormal{Re}\{w^2\zeta\}\Big)^{-\alpha/2} = \sum_{n=0}^{\infty}2^nA_n \big(\textnormal{Re}\{w^2\zeta\}\big)^n,
\end{eqnarray*}
with
$$A_n=\frac{\big(\alpha/2\big)_n}{n!}\Big(\frac{Q}{1+Q^2}\Big)^n,\quad \forall \, n\in \NN.
$$ Consequently
we get
\begin{eqnarray*}
J(w)&=&\sum_{n=0}^{\infty}2^nA_n \fint_\mathbb{T}\frac{\big(\textnormal{Re}\{w^2\zeta\}\big)^{n}}{\vert 1-\zeta\vert^\alpha}d\zeta\\ &=&a_\alpha+ \sum_{n=1}^{\infty} A_n \sum_{k=0}^{n}\left( \begin{array}{c}n \\k\end{array} \right)w^{2(n-2k)} \fint_\mathbb{T}\frac{\zeta^{n-2k}}{\vert 1-\zeta\vert^\alpha}d\zeta.
\end{eqnarray*}
By the Lemma \ref{lem} the coefficient $a_\alpha$ is real and therefore it  does not contribute in $\textnormal{Im} J(w).$ Our goal now is to compute the coefficients of $w^4$ and $ \overline{w}^4$ of the function between the bracket in \eqref{brak},  denoted by $B_4$ and ${B}_{-4}$, respectively.   First we observe that the coefficient $B_4$ can be obtained by summing over  the set  
$$\Big\{n\geq 1,0\leq k\leq n \,\backslash\,  n-2k=2\Big\}=\Big\{n\geq 1,k\geq 0 \,\backslash\,  n=2k+2\Big\}.
$$ 
 This is equivalent to write
 \begin{eqnarray*}
B_4&=& \sum_{k=0}^{\infty} A_{2k+2} \left( \begin{array}{c}2k+2 \\ k\end{array} \right) \fint_\mathbb{T}\frac{\zeta^{2}d\zeta}{\vert 1-\zeta\vert^\alpha}\\ &=& \sum_{k=1}^{\infty} A_{2k} \left( \begin{array}{c}2k \\ k-1\end{array} \right)a_2,\qquad\quad\qquad  a_2\triangleq \displaystyle{\fint_\mathbb{T}\frac{\zeta^{2}d\zeta}{\vert 1-\zeta\vert^\alpha}}.
\end{eqnarray*}
Next we shall compute the coefficient of $\overline{w}^4$ denoted by $B_{-4}.$ This may be done    by summing over  the set  
$$\Big\{n\geq 1,0\leq k\leq n \,\backslash\,  n-2k=-2\Big\}=\Big\{n\geq 1, k\geq 2 \,\backslash\,  n=2k-2\Big\}.
$$ Hence by change of variables,
 \begin{eqnarray*}
{B}_{-4}&=& \sum_{k=2}^{\infty} A_{2k-2} \left( \begin{array}{c}2k-2 \\ k\end{array} \right)\fint_\mathbb{T}\frac{\xi^{-2}d\xi}{\vert 1-\xi\vert^\alpha}\\ &=& \sum_{k=1}^{\infty} A_{2k} \left( \begin{array}{c}2k \\ k+1\end{array} \right)\fint_\mathbb{T}\frac{d\zeta}{\vert 1-\zeta\vert^\alpha}\\&\triangleq& \sum_{k=1}^{\infty} A_{2k} \left( \begin{array}{c}2k \\ k-1\end{array} \right)a_0.
\end{eqnarray*}
But, in view of Lemma \ref{lem}, one has
\begin{eqnarray*}
\frac{a_2}{a_0}=\frac{(2+\alpha)(4+\alpha)}{(4-\alpha)(6-\alpha)}\neq 1\quad \textnormal{for}\,\, \alpha\neq 1.  
\end{eqnarray*}
Thus $B_4\neq  B_{-4}$ and therefore  the coefficient of $w^4-\overline{w}^4$ of $G(\Omega,\phi_Q)(w)$ does not vanish.  It follows that the equation $G(\Omega, \phi_Q)(w)=0, \forall w\in \mathbb{T}$ is not true for any $\Omega$. This concludes the proof of the  desired result.
\end{proof}
\section{General statement}
In this section we shall give a more precise statement of Theorem \ref{thmV1}. In particular we shall give a description of the conformal mapping which parametrizes the rotating patches close to the unit disc. 

\begin{theorem}\label{thmV2}
Let $\alpha\in]0,1[$ and $m\in\NN^*\backslash\{1\}$. Then there exists $a>0$ and two continuous functions $\Omega : (-a, a) \to\RR$, $\phi : (-a, a)\to C^{2-\alpha}\big(\mathbb{T}\big)$ satisfying $\Omega(0)=\Omega_m^\alpha $, $\phi(0)=\textnormal{Id}$, such that $(\phi_s)_{-a<s<a}$ is a one-parameter non trivial solution of  the equation \eqref{Rotaeq}, where
$$
\Omega_m^\alpha\triangleq\frac{\Gamma(1-\alpha)}{2^{1-\alpha}\Gamma^2(1-\frac\alpha2)}\bigg(\frac{\Gamma(1+\frac\alpha2)}{\Gamma(2-\frac\alpha2)}-\frac{\Gamma(m+\frac\alpha2)}{\Gamma(m+1-\frac\alpha2)}\bigg),
$$
Moreover, $\phi_s$ admits the expansion
$$
\phi_s(w)=w\Big(1+s\frac{1}{w^m}+s\sum_{n\geq2} a_{nm-1}(s)\frac{1}{w^{nm}}\Big),\quad \forall\, w\in \mathbb{T},
$$
and it is  conformal on $\mathbb{C}\backslash\mathbb{D}$ and the complement $D_s$ of $\phi_s\big(\mathbb{C}\backslash\mathbb{D}\big)$ is an $m-$fold rotating patch with the  angular velocity $\Omega(s)$. In addition, the boundary of this patch belongs to the \mbox{class $C^{2-\alpha}.$}
\end{theorem}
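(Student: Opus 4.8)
The plan is to recast the rotating-patch equation \eqref{Rotaeq} (equivalently \eqref{model}) as an abstract bifurcation problem and apply the Crandall--Rabinowitz Theorem \ref{C-R}. Fix $\alpha\in\,]0,1[$ and an integer $m\geq2$. As function spaces I would take $X=X_m$, the closed subspace of $C^{2-\alpha}(\mathbb{T})$ of functions with real Fourier coefficients supported on the frequencies $\{\,1-km:\ k\geq1\,\}$ (these are exactly the perturbations $f$ for which $\phi=\textnormal{Id}+f$ is an $m$-fold symmetric conformal map, symmetric about the real axis), with $V\subset X$ a small ball around $0$ chosen so that every such $\phi$ is bi-Lipschitz and injective; and $Y=Y_m$, the closed subspace of $C^{1-\alpha}(\mathbb{T})$ of real functions of the form $\sum_{k\geq1}\beta_k\,\textnormal{Im}(w^{km})$. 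One checks that $\hat F_\alpha(\Omega,f):=F_\alpha(\Omega,\textnormal{Id}+f)$ does send $\RR\times V$ into $Y$ (the covariance of the right-hand side of \eqref{model} under $w\mapsto e^{2i\pi/m}w$ makes the $m$-fold structure and the parity persist), and by Proposition \ref{pro-el}(1) we have $\hat F_\alpha(\Omega,0)=0$ for all $\Omega$, so the trivial branch is at our disposal.

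The first substantial step is to show that $\hat F_\alpha:\RR\times V\to Y$ is of class $C^1$, with continuous partials $\partial_\Omega\hat F_\alpha$, $\partial_f\hat F_\alpha$ and $\partial_\Omega\partial_f\hat F_\alpha$. The $\Omega$-dependence is affine and harmless; the analysis concentrates on the singular integral $T(\phi)(w)=\fint_{\mathbb{T}}\phi'(\tau)\,|\phi(w)-\phi(\tau)|^{-\alpha}\,d\tau$. Using that $\phi$ is bi-Lipschitz one gets the kernel bounds $|K(w,\tau)|\lesssim|w-\tau|^{-\alpha}$ and $|\partial_wK(w,\tau)|\lesssim|w-\tau|^{-1-\alpha}$, so by Lemma \ref{noyau} and Corollary \ref{cor} the operator $T$ lands in $C^{1-\alpha}(\mathbb{T})$; multiplying by the factor $\overline{w}\,\overline{\phi'(w)}\in C^{1-\alpha}$, using that $C^{1-\alpha}(\mathbb{T})$ is an algebra, and taking imaginary parts, keeps us in $Y$. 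Differentiating $T$ at $\phi$ in a direction $h$ produces two kinds of terms: one in which $\phi'$ is replaced by $h'$, of the same singular type as $T$; and one coming from differentiating $|\phi(w)-\phi(\tau)|^{-\alpha}$, whose kernel picks up an extra factor $|w-\tau|^{-1}$ but is multiplied by the difference quotient $(h(w)-h(\tau))/(w-\tau)$, which is bounded and H\"older since $h\in C^{2-\alpha}\subset\textnormal{Lip}$, so this term again falls under Lemma \ref{noyau}. Iterating once more and verifying uniform continuity of all the kernels in $\phi$ yields the $C^1$ claim. I expect this to be \emph{the main obstacle}: unlike the Euler case $\alpha=0$, the kernel $|\phi(w)-\phi(\tau)|^{-\alpha}$ is not holomorphic in $\phi$, so there is no Residue-Theorem shortcut and the differentiation and estimates are lengthy.

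The second step is the spectral study of $\mathcal{L}_m:=\partial_f\hat F_\alpha(\Omega_m^\alpha,0)$. Plugging $h(w)=\sum_{n\geq0}b_n\overline{w}^n$ into \eqref{model} and evaluating the three integrals by Lemma \ref{lem} (formulas \eqref{In}, \eqref{Jn}, \eqref{Kn}) gives, after simplification, the Fourier-multiplier expression
$$\partial_f\hat F_\alpha(\Omega,0)h(w)=\tfrac12 b_0\,\Omega\, i\,(w-\overline{w})+\tfrac i2\sum_{n\geq1}(n+1)\big(\Omega-\Omega_{n+1}^\alpha\big)b_n\big(w^{n+1}-\overline{w}^{n+1}\big),$$
with $\Omega_n^\alpha$ as in the statement. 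On $X_m$ the constant mode $b_0$ is absent, so for $h\in X_m$ one has $\mathcal{L}_m h(w)=\tfrac i2\sum_{k\geq1}(km)\big(\Omega_m^\alpha-\Omega_{km}^\alpha\big)b_{km-1}\big(w^{km}-\overline{w}^{km}\big)$. Now $\Omega_n^\alpha=c_\alpha\big(g(1)-g(n)\big)$ with $c_\alpha>0$ and $g(n)=\Gamma(n+\tfrac\alpha2)/\Gamma(n+1-\tfrac\alpha2)$, and $g(n+1)/g(n)=(n+\tfrac\alpha2)/(n+1-\tfrac\alpha2)<1$ for $\alpha<1$, so $g$ is strictly decreasing; hence $\Omega_n^\alpha$ is strictly increasing, $\Omega_1^\alpha=0<\Omega_m^\alpha$, and the coefficient $(km)(\Omega_m^\alpha-\Omega_{km}^\alpha)$ vanishes precisely for $k=1$. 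Therefore $N(\mathcal{L}_m)=\textnormal{span}\{w^{1-m}\}$ is one-dimensional. Moreover $\sup_n\Omega_n^\alpha<\infty$, so the multiplier grows linearly, $\sim c_\alpha\,g(m)\,km$; as a first-order Fourier multiplier with symbol bounded below comparably to $km$, the restriction of $\mathcal{L}_m$ to the modes $k\geq2$ is an isomorphism onto the closed subspace $R(\mathcal{L}_m)=\{g\in Y_m:\ \textnormal{the }w^m-\overline{w}^m\textnormal{ coefficient of }g\textnormal{ vanishes}\}$, while $\mathcal{L}_m$ kills the mode $k=1$. Thus $\mathcal{L}_m$ is Fredholm of index zero with $\dim\big(Y_m/R(\mathcal{L}_m)\big)=1$, which is hypothesis (3) of Theorem \ref{C-R}.

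Finally, for the transversality assumption (4) take $h_0(w)=w^{1-m}=\overline{w}^{\,m-1}$, which spans $N(\mathcal{L}_m)$. Differentiating the displayed multiplier in $\Omega$ and specializing to $h_0$ gives
$$\partial_\Omega\partial_f\hat F_\alpha(\Omega_m^\alpha,0)h_0(w)=\tfrac i2\,m\,\big(w^m-\overline{w}^m\big),$$
a nonzero multiple of exactly the mode missing from $R(\mathcal{L}_m)$; hence it does not lie in $R(\mathcal{L}_m)$, and (4) holds. All the hypotheses of Crandall--Rabinowitz being met, we obtain $a>0$ and continuous curves $s\mapsto\Omega(s)$, $s\mapsto f_s=s\,h_0+s\,\psi(s)$ with $\Omega(0)=\Omega_m^\alpha$, $\psi(0)=0$, $\psi(s)$ in a complement $Z$ of $N(\mathcal{L}_m)$ in $X_m$; writing $\phi_s=\textnormal{Id}+f_s$ and using that $Z$ consists of maps with frequencies $\{1-km:k\geq2\}$ gives precisely the announced expansion $\phi_s(w)=w\big(1+s\,w^{-m}+s\sum_{n\geq2}a_{nm-1}(s)w^{-nm}\big)$ with $\phi_s\in C^{2-\alpha}(\mathbb{T})$, the curve being non-trivial. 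Since $\partial D_s=\phi_s(\mathbb{T})$ is a Jordan curve of class $C^{2-\alpha}$, Kellogg--Warschawski's theorem gives that the boundary of the patch $D_s$ is of class $C^{2-\alpha}$, which completes the proof.
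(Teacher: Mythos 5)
Your overall strategy is exactly the paper's: recast \eqref{Rotaeq} as $F(\Omega,f)=0$ on H\"older spaces of conformal perturbations, verify the $C^1$ hypotheses via the kernel estimates of Lemma \ref{noyau}, diagonalize $\partial_f F(\Omega,0)$ with Lemma \ref{lem}, and invoke Crandall--Rabinowitz; working in $X_m$, $Y_m$ from the outset rather than restricting to the symmetric subspaces at the end is an inessential reordering. The kernel computation, the monotonicity of $n\mapsto\Omega_n^\alpha$, the transversality check and the final bookkeeping all match the paper.

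There is, however, one genuine gap: the assertion that, because the multiplier $(km)\big(\Omega_m^\alpha-\Omega_{km}^\alpha\big)$ is ``bounded below comparably to $km$'', the restriction of $\mathcal{L}_m$ to the modes $k\geq2$ is an isomorphism onto the stated closed subspace of $Y_m$. In H\"older spaces this does not follow from a lower bound on the symbol: one must actually show that the formal preimage
$$
h(w)=\sum_{k\geq 2}\frac{2\,g_{km}}{km\,\big(\Omega_m^\alpha-\Omega_{km}^\alpha\big)}\,\overline{w}^{\,km-1}
$$
belongs to $C^{2-\alpha}(\mathbb{T})$ whenever $g\in Y_m$. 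Since $\Omega_n^\alpha\to\Theta_\alpha$ with a correction of size $n^{-(1-\alpha)}$ (Lemma \ref{lemz1}, part (3)), the inverse multiplier is of the form $\big(n(A-d_n)\big)^{-1}$ with $A=\Omega_m^\alpha-\Theta_\alpha\neq0$ and $d_n\sim n^{-(1-\alpha)}$; the paper expands $(A-d_n)^{-1}$ in a finite geometric series, reduces the leading term to the boundedness of the Szeg\"o projection on $C^{1-\alpha}(\mathbb{T})$, and controls the correction terms by writing them as a convolution with $K_1(w)=\sum_n n^{-(1-\alpha)}w^n$ and proving $K_1\in L^1(\mathbb{T})$ by an Abel-summation argument. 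This is a substantive piece of the proof --- comparable in weight to the regularity section you correctly single out as the main obstacle --- and it cannot be dismissed as a routine Fourier-multiplier fact. Everything else in your outline is sound.
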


{\bf$\bullet$\, Outline of the proof}. The proof of this theorem will be divided into several steps. The main key is  Crandall Rabinowitz Theorem, sometimes denoted by C-R, which  requires to check many properties  for  the linear and the nonlinear  functionals of the equation  \eqref{Rotaeq} defining the V-states. Firstly, we shall check the regularity assumptions that will be separated into weak and strong ones. Secondly, we will conduct a spectral study of the linearized operator around the trivial solution. In this context, we are able to describe the complete bifurcation set made of the values $\Omega$ such that the linearized operator is Fredholm with one-dimensional kernel. We shall also check in this section the transversality assumption of C-R Theorem. In the last step, we  give the complete proof for the existence of the V-states  and check their $m$-fold structure.
\section{Regularity of the functional $F$}\label{Reg}
This section is devoted to the study  of the regularity assumptions stated in  C-R Theorem. The object that we shall study is the nonlinear  functional  $G$ introduced in  \eqref{model} and  given by
\begin{equation*}
G(\Omega,\phi)(w)\triangleq\textnormal{Im}\Bigg\{\bigg(\Omega\phi(w)-{C_\alpha}\mathop{{\fint}}_\mathbb{T}\frac{\phi'(\tau)}{\vert \phi(w)-\phi(\tau)\vert^\alpha}d\tau\bigg)\overline{w}\,{\overline{\phi'(w)}}\Bigg\},\quad\forall\,  w\in \mathbb{T}.
\end{equation*}
Because we are interested in the bifurcation from  the disc (corresponding to $\phi=\textnormal{Id}$), it is more  convenient to make a translation and study the bifurcation from zero. To this end, we introduce the function $F$ defined by
$$
F(\Omega, f)(w)=G(\Omega, w+f(w)),\quad\forall w\in \mathbb{T}.
$$
In order to apply C-R Theorem we need first to fix  the function spaces and check the regularity of the functional $F$ with respect to these spaces. We should look for Banach spaces $X$ and $Y$ such that $F: \RR\times X\mapsto Y$ is well-defined and satisfies the assumptions of Theorem \ref{C-R}. These spaces will be defined in the spirit of the work done for the incompressible Euler equations \cite{HMV}. They are given by,
$$
X=\Big\{f\in C^{2-\alpha}(\mathbb{T}),\, f(w)=\sum_{n\geq 0}b_n\overline{w}^n, b_n\in \RR,\, w\in\mathbb{T}\Big\}
$$
and
$$
Y=\Big\{g\in C^{1-\alpha}(\mathbb{T}),\, g(w)=i\sum_{n\geq 1}g_n\big(w^n-\overline{w}^n\big), g_n\in \RR,\, w\in\mathbb{T}\Big\}.
$$
For  $r\in (0,1)$ we denote by $B_r$  the open ball of $X$ with center $0$ and radius $r$,
$$
B_r=\Big\{f\in X,\quad \Vert f\Vert_{C^{2-\alpha}}\leq r\Big\}.
$$
It is straightforward that for any $f\in B_r$ the function $w\mapsto \phi(w)=w+f(w)$ is conformal on  $\CC\backslash\overline{\mathbb{D}}.$ Moreover according to Kellog-Warshawski result \cite{WS}, the boundary of $\phi(\CC\backslash\overline{\mathbb{D}})$  is a Jordan curve of \mbox{class $C^{2-\alpha} $.} This gives the proof of the last result of Theorem \ref{thmV2} provided that the regularity of $\phi$ is shown.
Note that we can prove the regularity of the boundary without making appeal to the result \cite{WS}. We just look for the conformal parametrization $\theta\mapsto \phi(e^{i\theta})$ which is regular and prove that it belongs to $C^{2-\alpha}$. This last fact is equivalent to  $\phi\in C^{2-\alpha}(\mathbb{T})$.

\subsection{Weak regularity} 
Our objective is to prove that the functional $F$ is well-defined and admits G\^ateaux derivatives for any given direction. More precisely, we shall prove the following result.
\begin{proposition}
For any $r\in (0,1)$  the following holds true.
\begin{enumerate}
\item $F: \RR\times B_r\to Y$ is well-defined.
\item For each point $(\Omega,f)\in \RR\times B_r,$
the Gâteaux derivative of $F$, $\partial_fF(\Omega,f): X\to Y$  exists and belongs to $\mathcal{L}(X,Y)$
 \end{enumerate}
\end{proposition}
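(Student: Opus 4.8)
\emph{Overview.} The plan is to reduce both assertions to the singular-integral estimate of Corollary~\ref{cor}, the kernel Lemma~\ref{noyau}, and the algebra property of $C^{k+\gamma}(\mathbb{T})$, reading off the target structure $Y$ from a reflection symmetry. For (1): given $f\in B_r$, note $\|f'\|_{L^\infty(\mathbb{T})}\le\|f\|_{C^{2-\alpha}(\mathbb{T})}\le r<1$, so $\phi=\textnormal{Id}+f$ is conformal on $\CC\backslash\overline{\mathbb{D}}$ and bi-Lipschitz on $\mathbb{T}$, with constants uniform over $B_r$. Writing $F(\Omega,f)(w)=\textnormal{Im}\big\{\big(\Omega\phi(w)-C_\alpha\mathcal{T}_\phi(\phi')(w)\big)\overline{w}\,\overline{\phi'(w)}\big\}$ and using $\phi'\in C^{1-\alpha}(\mathbb{T})\subset L^\infty(\mathbb{T})$, Corollary~\ref{cor} gives $\mathcal{T}_\phi(\phi')\in C^{1-\alpha}(\mathbb{T})$ with a bound uniform over $B_r$, and the algebra property then gives $F(\Omega,f)\in C^{1-\alpha}(\mathbb{T})$. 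To land in $Y$ one checks $F(\Omega,f)$ is real-valued and odd: reality is automatic, and oddness follows because $f$ has real Fourier coefficients, which forces $\phi(\overline w)=\overline{\phi(w)}$, $\phi'(\overline w)=\overline{\phi'(w)}$ (via \eqref{deriv-bar}) and, after the change of variables $\tau\mapsto\overline\tau$ in the Cauchy-type integral, $\mathcal{T}_\phi(\phi')(\overline w)=\overline{\mathcal{T}_\phi(\phi')(w)}$, whence $F(\Omega,f)(\overline w)=-F(\Omega,f)(w)$. A real odd function on $\mathbb{T}$ has $c_0=0$ and $c_{-n}=-c_n=-\overline{c_n}$, i.e.\ $c_n=ig_n$ with $g_n\in\RR$, which is exactly the form defining $Y$; this proves (1).

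\emph{The G\^ateaux derivative.} For (2), I would fix $(\Omega,f)\in\RR\times B_r$, $h\in X$, put $\phi_t=\phi+th$, and differentiate at $t=0$. Using $\frac{d}{dt}\big|_0|z+t\delta z|^{-\alpha}=-\alpha|z|^{-\alpha-2}\textnormal{Re}(\overline z\,\delta z)$ with $z=\phi(w)-\phi(\tau)$, $\delta z=h(w)-h(\tau)$, together with the product rule, the candidate derivative is
\begin{align*}
\partial_f F(\Omega,f)h(w)=\textnormal{Im}\Big\{&\big(\Omega h(w)-C_\alpha\mathcal{T}_\phi(h')(w)+\alpha C_\alpha\,\mathcal{S}h(w)\big)\overline{w}\,\overline{\phi'(w)}\\
&{}+\big(\Omega\phi(w)-C_\alpha\mathcal{T}_\phi(\phi')(w)\big)\overline{w}\,\overline{h'(w)}\Big\},
\end{align*}
where $\mathcal{S}h(w)=\fint_{\mathbb{T}}\phi'(\tau)\,\textnormal{Re}\big((\overline{\phi(w)}-\overline{\phi(\tau)})(h(w)-h(\tau))\big)\,|\phi(w)-\phi(\tau)|^{-\alpha-2}\,d\tau$. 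The interchange of $\frac{d}{dt}$ with $\fint_{\mathbb{T}}$ is justified by dominated convergence: $\phi_t$ stays bi-Lipschitz for small $|t|$, so $|\phi_t(w)-\phi_t(\tau)|\gtrsim|w-\tau|$ and the differentiated integrand is dominated, uniformly in such $t$, by $C\|h\|_{C^1(\mathbb{T})}|w-\tau|^{-\alpha}\in L^1(\mathbb{T})$. Linearity of $h\mapsto\partial_f F(\Omega,f)h$ is immediate, and its image lies in $Y$ by the same reflection argument as in (1) (now also using $h(\overline w)=\overline{h(w)}$, $h'(\overline w)=\overline{h'(w)}$).

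\emph{Boundedness $X\to Y$.} Finally one checks $\|\partial_f F(\Omega,f)h\|_{C^{1-\alpha}(\mathbb{T})}\lesssim\|h\|_{C^{2-\alpha}(\mathbb{T})}$ term by term, uniformly over $B_r$. The pieces built from $\Omega h$, $\overline{h'}$, $\Omega\phi$ and $\mathcal{T}_\phi(\phi')$ are products of $C^{1-\alpha}(\mathbb{T})$ functions, controlled by the algebra property, and $\mathcal{T}_\phi(h')$ is handled directly by Corollary~\ref{cor} since $\|h'\|_{L^\infty(\mathbb{T})}\le\|h\|_{C^{2-\alpha}(\mathbb{T})}$. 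The only term needing work is $\mathcal{S}h=\fint_{\mathbb{T}}K(w,\tau)\,d\tau$, with $K(w,\tau)=\phi'(\tau)\,\textnormal{Re}\big((\overline{\phi(w)}-\overline{\phi(\tau)})(h(w)-h(\tau))\big)|\phi(w)-\phi(\tau)|^{-\alpha-2}$; the key observation is that both $\overline{\phi(w)}-\overline{\phi(\tau)}$ and $h(w)-h(\tau)$ vanish to first order at $\tau=w$, so $|K(w,\tau)|\lesssim\|h\|_{\textnormal{Lip}(\mathbb{T})}|w-\tau|^{-\alpha}$ and, differentiating in $w$ and using \eqref{deriv-bar}, $|\partial_w K(w,\tau)|\lesssim\|h\|_{C^1(\mathbb{T})}|w-\tau|^{-1-\alpha}$, with constants uniform over $B_r$. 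Lemma~\ref{noyau} (applied to the constant function) then gives $\mathcal{S}h\in C^{1-\alpha}(\mathbb{T})$ with $\|\mathcal{S}h\|_{C^{1-\alpha}(\mathbb{T})}\lesssim\|h\|_{C^{2-\alpha}(\mathbb{T})}$, and collecting all the estimates yields $\partial_f F(\Omega,f)\in\mathcal{L}(X,Y)$.

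\emph{Main obstacle.} The delicate step is the control of the singular operator $\mathcal{S}$ produced by differentiating the non-algebraic kernel $|\phi(w)-\phi(\tau)|^{-\alpha}$: unlike the Euler case $\alpha=0$ there is no residue simplification, so the size bound on $K$ and especially the bound on $\partial_w K$ must be verified by hand, which is where the second-order vanishing of the numerator at $\tau=w$ and the full $C^{2-\alpha}$ regularity of $h$ and $\phi$ are used. This is precisely the computation the introduction describes as "heavy and more involved".
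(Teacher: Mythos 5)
Your overall strategy coincides with the paper's: split $F$ into the algebraic part (handled by the algebra property of $C^{1-\alpha}(\mathbb{T})$) and the singular part (handled by Corollary \ref{cor}), identify membership in $Y$ via the reality of the Fourier coefficients of $\phi$, and estimate the linearization through kernel bounds of the form $|K|\lesssim |w-\tau|^{-\alpha}$, $|\partial_w K|\lesssim |w-\tau|^{-1-\alpha}$ fed into Lemma \ref{noyau}. Your reflection argument ($\phi(\overline w)=\overline{\phi(w)}$ forcing $F(\Omega,f)(\overline w)=-F(\Omega,f)(w)$) is a clean repackaging of the paper's direct computation $\overline{a_n}=a_n$ for the coefficients of $S(\phi)$, and your operator $\mathcal{S}h$ is exactly $\tfrac12\big(B(\phi,h)+C(\phi,h)\big)$ in the paper's notation, with the same two-fold vanishing of the numerator at $\tau=w$ doing the work.

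There is, however, one genuine gap in part (2). The G\^ateaux derivative must exist as a limit of the difference quotients $t^{-1}\big(F(\Omega,f+th)-F(\Omega,f)\big)$ \emph{in the topology of $Y$}, i.e.\ in the $C^{1-\alpha}(\mathbb{T})$ norm. Your dominated-convergence argument only yields, for each fixed $w\in\mathbb{T}$, the pointwise convergence of these quotients to the candidate expression; together with the subsequent bound $\|\partial_fF(\Omega,f)h\|_{C^{1-\alpha}}\lesssim\|h\|_{C^{2-\alpha}}$ this shows the limit function lies in $Y$, but it does not show that the convergence takes place in $C^{1-\alpha}(\mathbb{T})$. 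The paper addresses this separately: setting
$$
K(t,w,\tau)=\frac{1}{\vert \{\phi(w)-\phi(\tau)\}+t\{h(w)-h(\tau)\}\vert^\alpha}-\frac{1}{\vert \phi(w)-\phi(\tau)\vert^\alpha},
$$
it proves the \emph{quantitative, uniform-in-$w$} estimates $|K(t,w,\tau)|\le C|t|\,|w-\tau|^{-\alpha}$ and $|\partial_wK(t,w,\tau)|\le C|t|\,|w-\tau|^{-1-\alpha}$ (the second via the mean value theorem in $t$ applied to $\partial_w g$), and then invokes Lemma \ref{noyau} to convert the factor $|t|$ into an $O(|t|)$ bound in $C^{1-\alpha}(\mathbb{T})$ for the error of the difference quotient. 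Your uniform domination $C\|h\|_{C^1}|w-\tau|^{-\alpha}$ lacks the crucial factor of $|t|$ and therefore cannot produce norm convergence. The fix is entirely within your framework (the same kernel machinery with the extra $|t|$), but as written the step "the G\^ateaux derivative exists as a map into $Y$" is not established.
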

\begin{proof}
${\bf{(1)}}$ First, because  the space $C^{1-\alpha}(\mathbb{T})$ is an algebra, it is clear that  the first part of the functional $G$ given by,  $w\mapsto\Omega \,\phi(w) \, \overline{w}\, \overline{\phi^\prime(w)}$ belongs to $C^{1-\alpha}(\mathbb{T})$. To prove that the second term of $G$ belongs to $C^{1-\alpha}(\mathbb{T})$  it suffices to check that 
$$
S(\phi): w\mapsto \mathop{{\fint}}_\mathbb{T}\frac{\phi'(\tau)}{\vert \phi(w)-\phi(\tau)\vert^\alpha}d\tau\in C^{1-\alpha}(\mathbb{T}).
$$
This follows  immediately  from   Corollary \ref{cor}.
Therefore it remains to check that the Fourier coefficients of $G(\Omega,\phi)$ belong to $i\RR$. By the assumption, the Fourier coefficients of $\phi=\textnormal{Id}+f$ are real and thus the coefficients of $\overline{\phi^\prime}$ are real too. Now using  the stability of this property under the multiplication and the conjugation we deduce that the Fourier coefficients of $w\mapsto\Omega \phi(w)\overline{\phi^\prime(w)} \overline{w}$ are real. To complete the proof we shall check that the Fourier coefficients of $S(\phi)$ are also real for every  $f\in B_r.$ From the regularity of $\phi\in C^{1-\alpha}(\mathbb{T})$ we can pointwise expand  this function into its Fourier series, that is,  
$$S(\phi)(w)=\sum_{n\in\mathbb{Z}} a_n w^n,\quad a_n=\fint_{\mathbb{T}}\frac{S(\phi)(w)}{w^{n+1}}dw=\fint_{\mathbb{T}}\fint_{\mathbb{T}}\frac{\phi^\prime(\tau)}{|\phi(\tau)-\phi(w)|^\alpha}d\tau\frac{dw}{w^{n+1}}\cdot
$$
This coefficient can also be written in the form
$$
a_n=\frac{1}{4\pi^2}\int_{0}^{2\pi}\int_{0}^{2\pi}\frac{\phi^\prime(e^{i\theta})e^{i\theta}e^{-in\eta}}{|\phi(e^{i\theta})-\phi(e^{i\eta})|^\alpha}d\theta \,d\eta.
$$
By taking the conjugate of $a_n$ and using the properties
 $$\overline{\phi(e^{i\theta})}=\phi(e^{-i\theta}),\quad \overline{\phi^\prime(e^{i\theta})}=\phi^\prime(e^{-i\theta})\quad\hbox{and} \quad |z|=|\overline{z}|$$
 one may obtain by  change of variables
 \begin{eqnarray*}
 \overline{a_n}&=&\frac{1}{4\pi^2}\int_{0}^{2\pi}\int_{0}^{2\pi}\frac{\phi^\prime(e^{-i\theta})e^{-i\theta}e^{in\eta}}{|\phi(e^{-i\theta})-\phi(e^{-i\eta})|^\alpha}d\theta \,d\eta\\
 &=&\frac{1}{4\pi^2}\int_{0}^{2\pi}\int_{0}^{2\pi}\frac{\phi^\prime(e^{i\theta})e^{i\theta}e^{-in\eta}}{|\phi(e^{i\theta})-\phi(e^{i\eta})|^\alpha}d\theta \,d\eta\\
 &=&a_n.
 \end{eqnarray*}
 Consequently the Fourier coefficients of $S(\phi)$ are real and therefore  $F(\Omega,f)$   belongs to $Y$.
 \vspace{0,3cm}
 
 ${\bf{(2)}}$  We shall compute the Gâteaux derivative of $F$ at the point $f\in B_r$ in the direction $h\in X$. A refined analysis concerning its connection with Fr\'echet derivative will be developed in the next section. 
The Gâteaux derivative of $\partial_fF(\Omega,f)h$ is defined through the formula,
\begin{eqnarray*}
\nonumber  \partial_f F(\Omega,f)h(w)&=&\lim_{t\to 0}\frac{F(\Omega, f(w)+th(w))-F(\Omega, f(w))}{t}\\
&=&\frac{d}{dt}_{\Big|t=0}F(\Omega, f+th)(w).
\end{eqnarray*}
This limit is taken in the strong topology of $C^{1-\alpha}(\mathbb{T})$. Thus we shall first prove the existence of this limit for every point $w\in \mathbb{T}$ and after  check  that this limit exists in $C^{1-\alpha}(\mathbb{T})$.\\
 With the notation $\phi=\hbox{Id}+f$,
\begin{eqnarray}\label{gat22}
\nonumber  \partial_f F(\Omega,f)h(w)
&=&\frac{d}{dt}_{\Big|t=0}F(\Omega, f+th)(w)\\ &=&
\nonumber  \Omega\, \textnormal{Im} \bigg\{\phi(w)\,\overline{w}\,{\overline{h'(w)}}+h(w)\, \overline{w}\,{\overline{\phi'(w)}}\bigg\}\\
\nonumber &-&C_\alpha\, \textnormal{Im} \bigg\{S(\phi(w))\overline{w}{\overline{h'(w)}}+\overline{w}\,{\overline{\phi^\prime(w)}}\frac{d}{dt}_{\Big|t=0}S(\phi+th)(w)\bigg\}\\
&\triangleq& \mathcal{L}(f)(h(w)).
\end{eqnarray}
We shall make use of  the following identity: let $A\in \CC^\star$, $ B\in \CC$, $\alpha\in \RR$ and introduce the function $K:t\mapsto |A+Bt|^\alpha$ which is smooth close to zero, then we have
\begin{equation}\label{der1}
K^\prime(0)=\alpha |A|^{\alpha-2}\textnormal{Re}(\overline{A}B).
\end{equation}
 Combining this formula  with few easy computations one gets
 \begin{eqnarray}\label{dfh}
\frac{d}{dt}_{\Big|t=0}S(\phi+th)(w)&=& \fint_\mathbb{T}\frac{h^\prime(\tau)}{\vert \phi(w)-\phi(\tau)\vert^\alpha}d\tau -\frac{\alpha }{2} \mathop{{\fint}}_\mathbb{T}\frac{\big(\phi(w)-\phi(\tau)\big)\big(\overline{h(w)}-\overline{h(\tau)}\big)}{\vert \phi(w)-\phi(\tau)\vert^{\alpha+2}}\phi^\prime(\tau)d\tau \notag\\ 
&-& \frac{\alpha}{2}\mathop{{\fint}}_\mathbb{T}\frac{\big(\overline{\phi(w)}-\overline{\phi(\tau)}\big)\big(h(w)-h(\tau)\big)}{\vert \phi(w)-\phi(\tau)\vert^{\alpha+2}}\phi^\prime(\tau)d\tau\notag\\ &\triangleq&{A}(\phi,h)(w)-\frac{\alpha}{2}\Big({B}(\phi,h)(w)+{C}(\phi,h)(w)\Big).
\end{eqnarray}
Therefore we obtain from \eqref{gat22} the identity
\begin{eqnarray}\label{gat66}
 \nonumber\mathcal{L}(f)(h)(w)&=& \textnormal{Im} \bigg\{\Omega\,\Big[\phi(w)\,\overline{w}\,{\overline{h'(w)}}+h(w)\, \overline{w}\,{\overline{\phi'(w)}}\Big]-C_\alpha\,S(\phi(w))\,\overline{w}\,{\overline{h'(w)}}\bigg\}\\
 &-&C_\alpha\, \textnormal{Im} \bigg\{\overline{w}\,{\overline{\phi^\prime(w)}}\Big[{A}(\phi,h)(w)-\frac{\alpha}{2}\Big({B}(\phi,h)(w)+{C}(\phi,h)(w)\Big)\Big]\bigg\}.
\end{eqnarray}
Set
$$
\mathcal{L}_1(f)h(w)\triangleq \textnormal{Im} \bigg\{\Omega\,\Big[\phi(w)\,\overline{w}\,{\overline{h'(w)}}+h(w)\, \overline{w}\,{\overline{\phi'(w)}}\Big]-C_\alpha\,S(\phi(w))\,\overline{w}\,{\overline{h'(w)}}\bigg\}\
$$
Since $C^{1-\alpha}(\mathbb{T})$ is an algebra and using some classical H\"{o}lder embeddings, we get
\begin{eqnarray*}
\|\mathcal{L}_1(f)h\|_{C^{1-\alpha}(\mathbb(T)}\lesssim\|\phi\|_{C^{2-\alpha}}\|h\|_{C^{2-\alpha}}+\|S(\phi)\|_{C^{1-\alpha}}\|h\|_{C^{2-\alpha}}.
\end{eqnarray*}
To estimate $S(\phi)$ we use Corollary \ref{cor} combined with the estimate $\|\phi\|_{\textnormal{Lip}}+\|\phi^{-1}\|_{\textnormal{Lip}}\le C(r)$. Therefore
$$
\|S(\phi)\|_{C^{1-\alpha}(\mathbb{T})}\le C.
$$
It follows that
\begin{equation}\label{Sing001}
\|\mathcal{L}_1(f)h\|_{C^{1-\alpha}(\mathbb(T)}\le C\|h\|_{C^{2-\alpha}}.
\end{equation}
Now using  once again Corollary \ref{cor} we get that  ${A}(\phi,h)\in C^{1-\alpha}$ and
\begin{eqnarray}\label{Sing02}
\nonumber\|{A}(\phi,h)\|_{C^{1-\alpha}(\mathbb{T})}&\le& C \|h^\prime\|_{L^\infty}\\
&\le& C\|h\|_{C^{2-\alpha}(\mathbb{T})}.
\end{eqnarray}
 So, it remains to show that ${B}(f,h)$ and ${C}(f,h)$ are of class $ C^{1-\alpha}\big(\mathbb{T}\big)$.  
 For this end we set
$$
K_1(w,\tau)\triangleq\frac{\big(\phi(w)-\phi(\tau)\big)\big(\overline{h(w)}-\overline{h(\tau)}\big)}{\vert \phi(w)-\phi(\tau)\vert^{\alpha+2}}\cdot
$$
Clearly we have for $\tau\neq w\in \mathbb{T}$,
\begin{eqnarray}\label{Sing11}
\nonumber\vert K_1(w,\tau)\vert 
 \nonumber&\leq &\frac{\|\phi\|_{\textnormal{Lip}} \|h\|_{\textnormal{Lip}}}{\vert w-\tau\vert^\alpha}\\
 &\le& C\frac{ \|h\|_{C^{2-\alpha}(\mathbb{T})}}{\vert w-\tau\vert^{\alpha}}\cdot
\end{eqnarray}
Moreover, in view of the  formula \eqref{deriv-bar} we readily obtain \begin{eqnarray*}
\partial_w K_1(w,\tau)&=&{\phi'}(w)\frac{\overline{h(w)}-\overline{h(\tau)}}{\vert \phi(w)-\phi(\tau)\vert^{\alpha+2}}-\frac{\overline{h^\prime(w)}}{w^2}\frac{{\phi}(w)-{\phi}(\tau)}{\vert \phi(w)-\phi(\tau)\vert^{\alpha+2}}\\ &-&\frac{\alpha+2}{2}\bigg(\frac{\phi'(w)}{\vert \phi(w)-\phi(\tau)\vert^{\alpha+2}}-\frac{\overline{\phi'(w)}}{w^2}\frac{\big({\phi}(w)-{\phi}(\tau)\big)^2}{\vert \phi(w)-\phi(\tau)\vert^{\alpha+4}}\bigg)\Big(\overline{h(w)}-\overline{h(\tau)}\Big).
\end{eqnarray*}
Therefore one has
\begin{eqnarray}\label{Sing12}
\nonumber\big\vert\partial_w K_1(w,\tau)\big\vert &\leq&C\frac{\|\phi^\prime\|_{L^\infty}\Vert h'\Vert_{L^\infty}}{\vert w-\tau\vert^{\alpha+1}}\\
&\le& C\frac{ \|h\|_{C^{2-\alpha}(\mathbb{T})}}{\vert w-\tau\vert^{1+\alpha}}.
\end{eqnarray}
Hence, combining the inequalities  \eqref{Sing11} and  \eqref{Sing12} with Lemma \ref{noyau} we get
\begin{eqnarray}\label{Sing03}
\nonumber\|{B}(\phi,h)\|_{C^{1-\alpha}(\mathbb{T})}&\le& C \|h\|_{C^{2-\alpha}(\mathbb{T})}\|\phi^\prime\|_{L^\infty(\mathbb{T})}\\
&\le& C\|h\|_{C^{2-\alpha}(\mathbb{T})}.
\end{eqnarray}
 To estimate the last term ${C}(\phi,h)$ we observe  that 
 $${C}(\phi,h)(w)= \displaystyle{ \fint_\mathbb{T}}\overline{K_1(w,\tau)} \phi'(\tau)d\tau$$  and consequently similar proof of the estimate \eqref{Sing03} allows to get,
 \begin{eqnarray}\label{Sing04}
\nonumber\|{C}(\phi,h)\|_{C^{1-\alpha}(\mathbb{T})}&\le& C\|h\|_{C^{2-\alpha}(\mathbb{T})}.
\end{eqnarray}
By putting together this estimate with \eqref{gat66}, \eqref{Sing001}, \eqref{Sing02} and \eqref{Sing03} one concludes 
$$
\|\mathcal{L}(f)h\|_{C^{1-\alpha}(\mathbb{T})}\le C\| h\|_{C^{2-\alpha}(\mathbb{T})}.
$$
This means  that $\mathcal{L}(f)\in \mathcal{L}(X,Y).$ To achieve the proof it remains to check  that the convergence in \eqref{gat22}  towards $\mathcal{L}(f)(h)$ occurs in the strong topology of $C^{1–\alpha}(\mathbb{T}).$ The convergence of the quadratic terms containing the parameter $\Omega$ can be easily obtained from the algebra structure of  $C^{1–\alpha}(\mathbb{T}).$ Therefore the problem reduces to verify only  the convergence in the formula \eqref{dfh}. We shall check only  the convergence for the term involving $A(\phi,h)$ and the analysis for the other terms leading to $B(\phi,h)$ and $ C(\phi,h)$ is quite similar and we omit here the details.
 We start with showing 
$$
\lim_{t\to 0}\fint_\mathbb{T}\frac{h^\prime(\tau)}{\vert \{\phi(w)-\phi(\tau)\}+t\{h(w)-h(\tau)\}\vert^\alpha}d\tau=\fint_\mathbb{T}\frac{h^\prime(\tau)}{\vert \phi(w)-\phi(\tau)\vert^\alpha}d\tau\quad \hbox{in}\quad C^{1-\alpha}.
$$
Set
\begin{eqnarray*}
K(t,w,\tau)&=&\frac{1}{\vert \{\phi(w)-\phi(\tau)\}+t\{h(w)-h(\tau)\}\vert^\alpha}-\frac{1}{\vert \phi(w)-\phi(\tau)\vert^\alpha}\\
&\triangleq& g(t,w,\tau)-g(0,w,\tau).
\end{eqnarray*}
Then according to Lemma \ref{noyau}, the convergence happens provided that
$$
\big|K(t,w,\tau)\big|\le C|t|\frac{1}{|w-\tau|^{\alpha}},\quad \big|\partial_wK(t,w,\tau)\big|\le C|t|\frac{1}{|w-\tau |^{1+\alpha}}\cdot
$$
Let $t>0$ such that $t||h\|_{\textnormal{Lip}(\mathbb{T})}\le\frac12$ then
\begin{eqnarray*}
\big|K(t,w,\tau)\big|&\le& C\frac{\Big|\vert \{\phi(w)-\phi(\tau)\}+t\{h(w)-h(\tau)\}\vert^\alpha-\vert \{\phi(w)-\phi(\tau)\}\vert^\alpha\Big|}{|\tau-w|^{2\alpha}}\\
&\le&C|t|\| h\|_{\textnormal{Lip}(\mathbb{T})}|\tau-w|\frac{|\tau-w|^{\alpha-1}}{|\tau-w|^{2\alpha}}\\
&\le&C|t|\| h\|_{\textnormal{Lip}(\mathbb{T})}\frac{1}{|\tau-w|^\alpha},
\end{eqnarray*}
where we have used the inequality: for $\alpha\in (0,1),$  there exists $C_\alpha>0,$ such that 
\begin{equation}\label{eq0}
\vert a^\alpha-b^\alpha\vert \leq C_\alpha\frac{\vert a-b\vert}{a^{1-\alpha}+b^{1-\alpha}},\quad \forall  a,b\in\RR^*_+.
\end{equation}
To estimate $\partial_wK(t,w,\tau)$ we shall use the Mean value Theorem,
$$
K(t,w,\tau)=\int_0^t\partial_s g(s,w,\tau)ds
$$
and therefore 
$$
\big|\partial_wK(t,w,\tau)\big|\le\int_0^t\big|\partial_w\partial_s g(s,w,\tau)\big|ds,
$$
with 
$$
g(t,w,\tau)=\frac{1}{\vert \{\phi(w)-\phi(\tau)\}+t\{h(w)-h(\tau)\}\vert^\alpha}\cdot
$$
 Using \eqref{for22} leads to
\begin{eqnarray*}
\partial_wg(t,w,\tau)&=&\frac{-\alpha}{2}\frac{g(t,w,\tau)}{\vert \phi(w)-\phi(\tau)+t\big(h(w)-h(\tau)\big)\vert^{2}}\times\\
&&\bigg\{\big(\phi'(w)+th^\prime(w)\big)\Big({\overline{\phi(w)-\phi(\tau)}+t\overline{h(w)-h(\tau)}}\Big)
\\
&-&\frac{\overline{\phi'(w)+th^\prime(w)}}{w^2}\Big({\phi(w)-\phi(\tau)+t\big(h(w)-h(\tau)\big)}\Big)\bigg\}.
\end{eqnarray*}
Using straightforward computations yield for any $s\in [0,t],$
$$
\Big|\partial_s\partial_wg(s,w,\tau)\Big|\le C\frac{1}{|w-\tau|^{1+\alpha}}\cdot
$$
Hence we get
$$
\big|\partial_wK(t,w,\tau)\big|\le C|t|\frac{1}{|w-\tau|^{1+\alpha}}\cdot
$$
This completes the proof of the estimate  of the kernel and the required statement follows immediately.
\end{proof}
\subsection{Strong regularity}
In this subsection we shall  discuss the existence of Fr\'echet derivative of $F$ and prove  that $F$ is continuously differentiable on the domain $\RR\times B_r$. More precisely, we shell establish the following result.
 \begin{proposition}\label{string11}
For any $r\in (0,1)$  the following holds true.
\begin{enumerate}
\item $F: \RR\times B_r\to Y$ is of class $C^1.$
\item The partial derivative  $\partial_\Omega\partial_fF: \mathbb{R}\times B_r\to \mathcal{L}(X, Y)$  exists and is continuous.
\end{enumerate}
\end{proposition}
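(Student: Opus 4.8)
The plan is to obtain the $C^1$ regularity not by estimating Fr\'echet difference quotients directly, but by combining the existence of the G\^ateaux derivative $\partial_fF(\Omega,f)\in\mathcal L(X,Y)$ — already established in the preceding proposition — with the \emph{continuity} of the map $(\Omega,f)\mapsto\partial_fF(\Omega,f)$ into $\mathcal L(X,Y)$. Indeed, it is a general fact that a map between Banach spaces which possesses a G\^ateaux derivative at every point of an open set, and whose G\^ateaux derivative depends continuously (in the operator norm) on the base point, is automatically Fr\'echet differentiable there and of class $C^1$. So the whole matter reduces to a continuity statement.

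The dependence on $\Omega$ is harmless. Since $F$ is affine in $\Omega$, we have $\partial_\Omega F(\Omega,f)(w)=\mathrm{Im}\{\phi(w)\,\overline w\,\overline{\phi'(w)}\}$ with $\phi=\mathrm{Id}+f$, which does not involve $\Omega$ and is continuous in $f$ because $C^{1-\alpha}(\mathbb T)$ is an algebra and $f\mapsto\phi'$ is bounded from $C^{2-\alpha}$ to $C^{1-\alpha}$. For the same reason, the G\^ateaux derivative \eqref{gat66} splits as $\partial_fF(\Omega,f)h=\Omega\,\mathcal M(f)h+\mathcal N(f)h$, where $\mathcal M(f)h=\mathrm{Im}\{\phi(w)\overline w\,\overline{h'(w)}+h(w)\overline w\,\overline{\phi'(w)}\}$ collects the two quadratic terms and $\mathcal N(f)h$ gathers all the pieces built from $S(\phi)$, $A(\phi,h)$, $B(\phi,h)$, $C(\phi,h)$; neither $\mathcal M$ nor $\mathcal N$ depends on $\Omega$. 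Consequently $\partial_\Omega\partial_fF(\Omega,f)=\mathcal M(f)$ exists and is independent of $\Omega$, so claim (2) follows once one checks that $f\mapsto\mathcal M(f)\in\mathcal L(X,Y)$ is continuous, which is immediate from the algebra property (one even gets local Lipschitz continuity). Likewise claim (1) is reduced to the continuity of $f\mapsto\mathcal N(f)\in\mathcal L(X,Y)$, joint continuity in $(\Omega,f)$ being then trivial.

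To prove the continuity of $\mathcal N$, fix $f_1,f_2\in B_r$, put $\phi_j=\mathrm{Id}+f_j$, and estimate $\|\mathcal N(f_1)h-\mathcal N(f_2)h\|_{C^{1-\alpha}}$ uniformly for $\|h\|_{C^{2-\alpha}}\le1$. The argument follows the scheme of the strong-convergence computation carried out at the end of the weak-regularity proof: each of the four building blocks is written as a singular integral of the type \eqref{opsin}, with kernel a smooth function of $\phi(w)-\phi(\tau)$, $\overline{\phi(w)}-\overline{\phi(\tau)}$, $\phi'$ and $h$, and one bounds the kernel differences. On $B_r$ all the $\phi_j$ are bi-Lipschitz with uniform constants and satisfy $|\phi_j(w)-\phi_j(\tau)|\simeq|w-\tau|$, so for the non-differentiated pieces one gets
\[
\big|K_{\phi_1}(w,\tau)-K_{\phi_2}(w,\tau)\big|\le \frac{C(r)\,\|f_1-f_2\|_{C^{2-\alpha}}}{|w-\tau|^{\alpha}},\qquad \big|\partial_w\big(K_{\phi_1}-K_{\phi_2}\big)(w,\tau)\big|\le \frac{C(r)\,\|f_1-f_2\|_{C^{2-\alpha}}}{|w-\tau|^{1+\alpha}},
\]
and the analogous bounds, with one further power of $|w-\tau|$ in the denominator, for the pieces carrying $|\phi(w)-\phi(\tau)|^{-\alpha-2}$. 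The elementary inequality \eqref{eq0} controls the terms in which the difference sits inside the power $|\cdot|^{-\alpha}$, while \eqref{deriv-bar} and \eqref{for22} are used to compute $\partial_w$ of the kernels so that the derivative bounds can be verified directly; the uniform lower bound $|\phi_j(w)-\phi_j(\tau)|\gtrsim|w-\tau|$ keeps the higher-order singularities from degenerating. With these kernel-difference estimates, Lemma \ref{noyau} (and Corollary \ref{cor} for the term with $S$) yields $\|\mathcal N(f_1)h-\mathcal N(f_2)h\|_{C^{1-\alpha}}\le C(r)\,\|f_1-f_2\|_{C^{2-\alpha}}\,\|h\|_{C^{2-\alpha}}$, hence $f\mapsto\mathcal N(f)$ is continuous on $B_r$; combined with the G\^ateaux differentiability from the preceding proposition this gives both assertions.

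The main obstacle is purely technical bookkeeping: organizing the kernel-difference estimates for the most singular terms $B(\phi,h)$ and $C(\phi,h)$. After the $\partial_w$-differentiation these produce kernels with singularities of order $|\phi(w)-\phi(\tau)|^{-\alpha-4}$ multiplied by factors like $(\phi(w)-\phi(\tau))^2$, and one must exploit exactly that cancellation, uniformly over $\phi_1,\phi_2\in B_r$, to reach the bound $C(r)\|f_1-f_2\|_{C^{2-\alpha}}|w-\tau|^{-1-\alpha}$. Conceptually nothing new is needed — it is the same mechanism already used to show $\mathcal L(f)\in\mathcal L(X,Y)$ and the strong convergence of the difference quotients — but it is the point where the computation is heaviest.
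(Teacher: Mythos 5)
Your proposal follows essentially the same route as the paper: reduce $C^1$ regularity to the continuity of the G\^ateaux derivative in operator norm, isolate the $\Omega$-linear part to handle $\partial_\Omega\partial_f F$ via the algebra property of $C^{1-\alpha}(\mathbb{T})$, and prove the local Lipschitz estimate $\|\partial_fF(\Omega,f)h-\partial_fF(\Omega,g)h\|_{C^{1-\alpha}}\le C\|f-g\|_{C^{2-\alpha}}\|h\|_{C^{2-\alpha}}$ by kernel-difference bounds of the form $C\|f-g\|_{\textnormal{Lip}}|w-\tau|^{-\alpha}$ and $C\|f-g\|_{\textnormal{Lip}}|w-\tau|^{-1-\alpha}$ fed into Lemma \ref{noyau}, exactly as the paper does for the pieces $\mathcal{L}_1$, $A$, $B$, $C$. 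The technical bookkeeping you defer (splitting the differentiated kernels of $B$ and $C$ into the terms $\mathcal{N}_1,\mathcal{N}_2,\mathcal{N}_3$ and using the power-difference inequality to exploit the cancellation) is precisely what the paper carries out, so the argument is correct and complete in outline.
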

\begin{proof}
${\bf{(1)}}$ This amounts to showing that the partial derivatives $\partial_\Omega F$ and $\partial_f F$ in the G\^ateaux sense exist and are continuous. For the first derivative, we observe the the linear  dependence  of $F$ on $\Omega$  allows to get, 
\begin{eqnarray*}
\partial_\Omega F(\Omega,f)(w)&=&\textnormal{Im}\Big\{\overline{w}\, \phi(w)\, {\overline{\phi^\prime(w)}}\Big\}.
\end{eqnarray*}
 Obviously this is polynomial on $\phi$ and $\phi^\prime$ and therefore it is continuous  in the strong  topology of $X$. The next step is to  prove that for given $\Omega$,  $\partial_fF\big(\Omega,f)$ is continuous as a function of $f$ taking values in the space of bounded linear operators from $X$ to $Y$. In other words, we will show that, for a fixed $f,g\in B_r$,
\begin{equation}\label{mohimma} 
\Vert \partial_fF(\Omega,f)(h)-\partial_gF(\Omega,g)(h)\Vert_{C^{1-\alpha}(\mathbb{T})}\leq C\Vert f-g\Vert_{C^{2-\alpha}(\mathbb{T})}\Vert h\Vert_{C^{2-\alpha}(\mathbb{T})}.
\end{equation}
 Now because $C^{1-\alpha}$ is an algebra and from \eqref{dfh} and \eqref{gat66}  the problem reduces to show the required inequality for the quantities $\mathcal{L}_1(f)h$, ${A}(\phi,h)$,  ${B}(\phi,h)$ and ${C}(\phi ,h)$. The crucial tool for this task is Lemma \ref{noyau} which will be frequently used here.  We shall start with proving  the estimate
$$
\|\mathcal{L}_1(f)h-\mathcal{L}_1(g)h\|_{C^{1-\alpha}}\le C\Vert f-g\Vert_{C^{2-\alpha}(\mathbb{T})}\Vert h\Vert_{C^{2-\alpha}(\mathbb{T})}.
$$
For this end , it sufficient to establish that
$$
\|S(\phi)-S(\psi)\|_{C^{1-\alpha}}\le C\Vert f-g\Vert_{C^{2-\alpha}(\mathbb{T})},
$$
with $\phi=\hbox{Id}+f$ and $\psi=\hbox{Id}+g$. Write

\begin{eqnarray*}
S(\phi)(w)-S(\psi)(w)&=&\mathop{{\fint}}_\mathbb{T}\Big(\frac{\phi^\prime(\tau)}{\vert \phi(w)-\phi(\tau)\vert^\alpha}-\frac{\psi^\prime(\tau)}{\vert \psi(w)-\psi(\tau)\vert^\alpha}\Big)d\tau\\
&=&\mathop{{\fint}}_\mathbb{T}\Big(\frac{1}{\vert \phi(w)-\phi(\tau)\vert^\alpha}-\frac{1}{\vert \psi(w)-\psi(\tau)\vert^\alpha}\Big)\psi'(\tau)d\tau\\
&+&\mathop{{\fint}}_\mathbb{T}\frac{\phi^\prime(\tau)-\psi^\prime(\tau)}{\vert \phi(w)-\phi(\tau)\vert^\alpha}d\tau.
\end{eqnarray*}
The estimate of the last term follows immediately from Corollary \ref{cor}, that is,
\begin{eqnarray*}
\Big\|\mathop{{\fint}}_\mathbb{T}\frac{\phi^\prime(\tau)-\psi^\prime(\tau)}{\vert \phi(\cdot)-\phi(\tau)\vert^\alpha}d\tau\Big\|_{C^{1-\alpha}(\mathbb{T})}&\le& C\|f^\prime-g^\prime\|_{L^\infty}\\
&\le&C\|f-g\|_{{C^{2-\alpha}(\mathbb{T})}}.
\end{eqnarray*}
As to the estimate of  first term it can be deduced easily from the next  general one: let $T$ be the operator defined by
$$
T\chi(w)=\mathop{{\fint}}_\mathbb{T}\Big(\frac{1}{\vert \phi(w)-\phi(\tau)\vert^\alpha}-\frac{1}{\vert \psi(w)-\psi(\tau)\vert^\alpha}\Big)\chi(\tau)d\tau
$$
then
\begin{equation}\label{Singds}
\big\|T\chi\big\|_{C^{1-\alpha}(\mathbb{T})}\le C\|\psi-\phi\|_{\textnormal{Lip}(\mathbb{T})}\|\chi\|_{L^\infty(\mathbb{T})}.
\end{equation}
To prove this control we shall introduce the kernel
$$ 
K_2(w,\tau)\triangleq \frac{1}{\vert \phi(w)-\phi(\tau)\vert^\alpha}-\frac{1}{\vert \psi(w)-\psi(\tau)\vert^\alpha}
$$
and prove that it satisfies  the estimates, 
$$\vert K_2(w,\tau)\vert\lesssim \frac{\Vert \psi'-\phi'\Vert_{L^\infty}}{\vert w-\tau\vert^\alpha}\quad\textnormal{and}\quad \vert \partial_wK_2(w,\tau)\vert\lesssim \frac{\Vert \psi'-\phi'\Vert_{L^\infty}}{\vert w-\tau\vert^{\alpha+1}}.$$
Whence these estimates are proved we can then apply Lemma \ref{noyau} and get the desired result.
The first estimate is easy to obtain by using \eqref{eq0}. On other hand, in view of \eqref{deriv-bar} the derivative of $K_2(w,\tau)$ with respect to $w$ is given by
\begin{eqnarray*}
\partial_w K_2(w,\tau)&=& -\frac{\alpha}{2}\Big(\overline{\mathcal{I}(w,\tau)}-\frac{\mathcal{I}(w,\tau)}{w^2}\Big)
\end{eqnarray*}
where
\begin{eqnarray*}
\mathcal{I}(w,\tau)&\triangleq&\overline{\phi'(w)}\frac{\phi(w)-\phi(\tau)}{\vert \phi(w)-\phi(\tau)\vert^{\alpha+2}}-\overline{\psi'(w)}\frac{\psi(w)-\psi(\tau)}{\vert \psi(w)-\psi(\tau)\vert^{\alpha+2}}\cdot
\end{eqnarray*}
We shall transform this quantity into, 
$$
\mathcal{I}(w,\tau)= \mathcal{I}_1(w,\tau)+\mathcal{I}_2(w,\tau)+\mathcal{I}_3(w,\tau),
$$
with
$$
\mathcal{I}_1(w,\tau)\triangleq \overline{\phi'(w)}\, \frac{({\phi}-{\psi})(w)-({\phi}-{\psi})(\tau)}{\vert \phi(w)-\phi(\tau)\vert^{\alpha+2}},
$$
$$
\mathcal{I}_2(w,\tau)\triangleq \big(\overline{\phi'(w)}-\overline{\psi'(w)}\big)\frac{{\psi}(w)-{\psi}(\tau)}{\vert \psi(w)-\psi(\tau)\vert^{\alpha+2}},
 $$
 and
 $$
 \mathcal{I}_3(w,\tau)\triangleq \overline{\phi'(w)}\big({\psi}(\tau)-{\psi}(w)\big)\frac{ \vert \phi(w)-\phi(\tau)\vert^{\alpha+2}-\vert \psi(w)-\psi(\tau)\vert^{\alpha+2}}{\vert \phi(w)-\phi(\tau)\vert^{\alpha+2}\vert \psi(w)-\psi(\tau)\vert^{\alpha+2}}.
$$
For the first and the second term one readily gets
\begin{eqnarray}\label{i1+i2}
\vert\mathcal{I}_1(w,\tau)\vert+\vert \mathcal{I}_2(w,\tau)\vert &\lesssim & \frac{\|\phi-\psi\|_{\textnormal{Lip}(\mathbb{T})}}{\vert w-\tau\vert^{\alpha+1}}.
\end{eqnarray}
Concerning  the last term we shall use the following inequality whose proof is classical.
\begin{equation}\label{eq}
\vert a^{k+1+\alpha}-b^{k+1+\alpha}\vert \leq C(k,\alpha)\vert a-b\vert \big( a^{k+\alpha}+b^{k+\alpha}\big),\quad a,b\in\RR_+,k\in \NN^*,\,0<\alpha <1.
\end{equation}
Thus we find
$$
 \Big|\vert \phi(w)-\phi(\tau)\vert^{\alpha+2}-\vert \psi(w)-\psi(\tau)\vert^{\alpha+2}\Big|\le C\|\phi-\psi\|_{\textnormal{Lip}(\mathbb{T})}\, |\tau-w|^{\alpha+2}
$$
and consequently,
\begin{eqnarray}\label{i03}
\vert\mathcal{I}_3(w,\tau)\vert\leq C\frac{\|\phi-\psi\|_{\textnormal{Lip}(\mathbb{T})}}{\vert w-\tau\vert^{\alpha+1}}.
\end{eqnarray}
Putting together \eqref{i1+i2} and  \eqref{i03} we find, 
$$
\vert\mathcal{I}(w,\tau)\vert\leq C\frac{\|\phi-\psi\|_{\textnormal{Lip}(\mathbb{T})}}{\vert w-\tau\vert^{\alpha+1}}.
$$
Therefore
$$
\vert\partial_w K_2(w,\tau)\vert\leq C\frac{\|\phi-\psi\|_{\textnormal{Lip}(\mathbb{T})}}{\vert w-\tau\vert^{\alpha+1}}.
$$
This achieves the suitable estimates for the kernel $K_2.$ Let us now move to  the continuity  estimate of ${A}(\phi,h)$. We write from the definition, 
\begin{eqnarray}
{A}(\phi,h)(w)-{A}(\psi,h)(w)=\mathop{{\fint}}_\mathbb{T}\Big(\frac{1}{\vert \phi(w)-\phi(\tau)\vert^\alpha}-\frac{1}{\vert \psi(w)-\psi(\tau)\vert^\alpha}\Big)h'(\tau)d\tau
\end{eqnarray}
Using \eqref{Singds} we immediately obtain,
\begin{eqnarray}\label{a}
\nonumber \big\Vert {A}(\phi,h)-{A}(\psi,h)\big\Vert_{C^{1-\alpha}}& \leq &C\Vert \phi-\psi\Vert_{\textnormal{Lip}(\mathbb{T})}\Vert h\Vert_{\textnormal{Lip}(\mathbb{T})}\\
& \leq &C\Vert f-g\Vert_{\textnormal{Lip}(\mathbb{T})}\Vert h\Vert_{C^{2-\alpha}(\mathbb{T})}.
\end{eqnarray}
This completes the proof of the estimate of  the term $A(\phi, h)$ which fits with \eqref{mohimma}.

Now we shall investigate  the continuity  estimate of $B(\phi,h)$ defined in \eqref{dfh}.
According to this definition, one has
\begin{eqnarray*}
\big({B}(\phi,h)-{B}(\psi,h)\big)(w)= \mathop{{\fint}}_\mathbb{T}K_3(w,\tau)d\tau,
\end{eqnarray*}
with
$$
K_3(w,\tau)\triangleq \Bigg({\phi'}(\tau)\frac{{\phi}(w)-{\phi}(\tau)}{\vert \phi(w)-\phi(\tau)\vert^{\alpha+2}}-{\psi'}(\tau)\frac{{\psi}(w)-{\psi}(\tau)}{\vert \psi(w)-\psi(\tau)\vert^{\alpha+2}}\Bigg)\Big(\overline{h(w)}-\overline{h(\tau)}\Big).
$$
We can rewrite this kernel in the form,
$$
K_3(w,\tau)= \Big(K_3^1(w,\tau)+K_3^2(w,\tau)+K_3^3(w,\tau)\Big)\Big(\overline{h(w)}-\overline{h(\tau)}\Big),
$$
with
$$
K_3^1(w,\tau)\triangleq \phi'(\tau)\,\frac{({\phi}-{\psi})(w)-({\phi}-{\psi})(\tau)}{\vert \phi(w)-\phi(\tau)\vert^{\alpha+2}},
$$
$$
K_3^2(w,\tau)\triangleq \big(\phi'-\psi'\big)(\tau)\frac{{\psi}(w)-{\psi}(\tau)}{\vert \psi(w)-\psi(\tau)\vert^{\alpha+2}},
$$
and
$$
K_3^3(w,\tau)\triangleq\phi'(\tau)\big({\psi}(\tau)-{\psi}(w)\big)\frac{\vert \phi(w)-\phi(\tau)\vert^{\alpha+2}-\vert \psi(w)-\psi(\tau)\vert^{\alpha+2}}{\vert \psi(w)-\psi(\tau)\vert^{\alpha+2}\vert \phi(w)-\phi(\tau)\vert^{\alpha+2}}.
$$

In view of the inequality \eqref{eq} we may conclude that
\begin{eqnarray*}
\vert K_3^1(w,\tau)\vert+\vert K_3^2(w,\tau)\vert+\vert K_3^3(w,\tau)\vert &\leq & C\frac{\Vert \phi-\psi\Vert_{\textnormal{Lip}(\mathbb{T})}}{\vert w-\tau\vert^{\alpha+1}}\\
&\le & C\frac{\Vert f-g\Vert_{\textnormal{Lip}(\mathbb{T})}}{\vert w-\tau\vert^{\alpha+1}}\cdot
\end{eqnarray*}
Consequently we find for $w\neq\tau\in \mathbb{T}$
\begin{equation}\label{Noy1}
\vert K_3(w,\tau)\vert\leq C\|h\|_{\textnormal{Lip}(\mathbb{T})}\frac{\Vert f-g\Vert_{\textnormal{Lip}(\mathbb{T})}}{\vert w-\tau\vert^{\alpha}}\cdot 
\end{equation}
Now we intend to estimate $\partial_w K_3(w,\tau)$. Easy computations yield
\begin{eqnarray}\label{par3}
\nonumber \partial_w K_3(w,\tau)&=&\Big(-\frac{\alpha}{2}\mathcal{N}_1(w,\tau)-\big(1+\frac{\alpha}{2}\big) \mathcal{N}_2(w,\tau)\Big)\big(\overline{h(w)}-\overline{h(\tau)}\big)\\
&-& \mathcal{N}_3(w,\tau)\frac{\overline{h'(w)}}{w^2},
\end{eqnarray}
with
$$
 \mathcal{N}_1(w,\tau)\triangleq \frac{\phi'(\tau){\phi'}(w)}{\vert \phi(w)-\phi(\tau)\vert^{\alpha+2}}-\frac{\psi'(\tau){\psi'}(w)}{\vert \psi(w)-\psi(\tau)\vert^{\alpha+2}},
$$
$$
 \mathcal{N}_2(w,\tau) \triangleq \frac{\overline{\phi'(w)}}{w^2}\frac{\phi'(\tau)\big({\phi}(w)-{\phi}(\tau)\big)^2 }{\vert \phi(w)-\phi(\tau)\vert^{\alpha+4}}-\frac{\overline{\psi'(w)}}{w^2}\frac{\psi'(\tau)\big({\psi}(w)-{\psi}(\tau)\big)^2 }{\vert \psi(w)-\psi(\tau)\vert^{\alpha+4}},
$$
and
$$
 \mathcal{N}_3(w,\tau)\triangleq \phi'(\tau)\frac{{\phi}(w)-{\phi}(\tau)}{\vert \phi(w)-\phi(\tau)\vert^{\alpha+2}}-\psi'(\tau)\frac{{\psi}(w)-{\psi}(\tau)}{\vert \psi(w)-\psi(\tau)\vert^{\alpha+2}}\cdot
$$
The estimate of the last term $\mathcal{N}_3$ can be done exactly as for $\mathcal{I}$. Concerning $ \mathcal{N}_1(w,\tau)$ we may write 
\begin{eqnarray*}
 \mathcal{N}_1(w,\tau)&=& ({\phi'(w)}-{\psi'(w)})\frac{\phi'(\tau)}{\vert \phi(w)-\phi(\tau)\vert^{\alpha+2}}+\big(\phi'(\tau)-\psi'(\tau)\big)\frac{{\psi'}(w)}{\vert \psi(w)-\psi(\tau)\vert^{\alpha+2}}\\ &-&\phi'(\tau){\psi'}(w)\frac{\vert \phi(w)-\phi(\tau)\vert^{\alpha+2}-\vert \psi(w)-\psi(\tau)\vert^{\alpha+2}}{\vert \phi(w)-\phi(\tau)\vert^{\alpha+2}\vert \psi(w)-\psi(\tau)\vert^{\alpha+2}}\cdot
\end{eqnarray*}
Hence, using inequality \eqref{eq} we immediately deduce that
\begin{eqnarray*}
\vert  \mathcal{N}_1(w,\tau)\vert&\leq& \frac{C\Vert \phi'-\psi'\Vert_{L^\infty}}{\vert w-\tau\vert^{\alpha+2}}\cdot
\end{eqnarray*}

Now we shall split the term $ \mathcal{N}_2(w,\tau)$ as follows,
\begin{eqnarray*}
 \mathcal{N}_2(w,\tau)&= &\sum_{k=1}^4 \mathcal{N}_{2,k}(w,\tau)
 \end{eqnarray*}
 with
 $$
 \mathcal{N}_{2,1}(w,\tau)= \phi'(\tau)\big(\overline{\phi'(w)}-\overline{\psi'(w)}\big)\frac{\big({\phi}(w)-{\phi}(\tau)\big)^2}{w^2\vert \phi(w)-\phi(\tau)\vert^{\alpha+4}},
$$
$$
 \mathcal{N}_{2,2}(w,\tau)=\phi'(\tau)\overline{\psi'(w)}\,\frac{\big({\phi}(w)-{\phi}(\tau)\big)^2-\big({\psi}(w)-{\psi}(\tau)\big)^2}{w\vert \phi(w)-\phi(\tau)\vert^{\alpha+4}},
$$
$$
 \mathcal{N}_{2,3}(w,\tau)= \big(\phi'(\tau)-\psi'(\tau)\big)\overline{\psi^\prime(w)}\frac{\big({\psi}(w)-{\psi}(\tau)\big)^2}{w^2\vert \phi(w)-\phi(\tau)\vert^{\alpha+4}},
$$
and
\begin{eqnarray*}
 \mathcal{N}_{2,4}(w,\tau)&=&\psi'(\tau)\overline{\psi^\prime(w)}\big({\psi}(w)-{\psi}(\tau)\big)^2\frac{\vert \psi(w)-\psi(\tau)\vert^{\alpha+4}-\vert \phi(w)-\phi(\tau)\vert^{\alpha+4}}{\vert \psi(w)-\psi(\tau)\vert^{\alpha+4}\vert \phi(w)-\phi(\tau)\vert^{\alpha+4}}.
\end{eqnarray*}
Similar  computations as before lead to, 
\begin{eqnarray*}
\vert  \mathcal{N}_2(w,\tau)\vert  &\lesssim &  \frac{\Vert \phi'-\psi'\Vert_{L^\infty}}{\vert w-\tau\vert^{\alpha+2}}\cdot
\end{eqnarray*}
Hence, in view of the identity \eqref{par3} we obtain
\begin{eqnarray}\label{Noy2}
\vert  \partial_w K_3(w,\tau)\vert  &\lesssim &  \frac{\Vert h'\Vert_{L^\infty}\Vert \phi'-\psi'\Vert_{L^\infty}}{\vert w-\tau\vert^{\alpha+1}}.
\end{eqnarray}
At this stage we can use \eqref{Noy1}, \eqref{Noy2} and Lemma \ref{noyau},
\begin{eqnarray}\label{b}
\nonumber\big\Vert {B}(\phi,h)-{B}(\psi,h)\big\Vert_{C^{1-\alpha}} &\lesssim& \Vert f-g\Vert_{\textnormal{Lip}(\mathbb{T})}\Vert h\Vert_{\textnormal{Lip}(\mathbb{T})}\\
&\lesssim&\Vert f-g\Vert_{C^{2-\alpha}(\mathbb{T})}\Vert h\Vert_{C^{2-\alpha}(\mathbb{T})}.
\end{eqnarray}
Finally, we observe from \eqref{dfh} that
$$
{C}(\phi,h)(w)-{C}(\psi,h)(w)=\fint_{\mathbb{T}} \overline{K_3(w,\tau)} \phi^\prime(\tau) d\tau
$$
and therefore we find similar estimate to \eqref{b}. This ends the proof of \eqref{mohimma}.
\vspace{0,3cm}

{$\bf{(2)}$} Now we shall compute $\partial_\Omega\partial_f F(\Omega, f)$ and prove the continuity of this function. Let $f\in B_r$ and $h\in C^{2-\alpha}(\mathbb{T})$ be a fixed direction, then in view of \eqref{gat22} one has
$$
\partial_\Omega\partial_f F(\Omega, f)h(w)=  \textnormal{Im} \bigg\{\phi(w)\,\overline{w}\,{\overline{h'(w)}}+h(w)\,\overline{w}\,{\overline{\phi'(w)}}\bigg\}.
$$
 It follows  that for $f,g\in B_r$,
\begin{eqnarray*}
\|\partial_\Omega\partial_f F(\Omega, f)h-\partial_\Omega\partial_f F(\Omega, f)h\|_{C^{1-\alpha}(\mathbb{T})}&\le& C\|f-g\|_{C^{1-\alpha}(\mathbb{T})}\|h\|_{C^{2-\alpha}(\mathbb{T})}\\
&\le& C\|f-g\|_{C^{2-\alpha}(\mathbb{T})}\|h\|_{C^{2-\alpha}(\mathbb{T})}.
\end{eqnarray*}
This proves the continuity of $\partial_\Omega\partial_f F(\Omega,f):\mathbb{R}\times B_{r}\to \mathcal{L}(X,Y)$ and therefore the proof of the second point is now achieved.
\end{proof}

\section{Spectral study}
In this section we concentrate on the spectral study of the linearized operator of $F$ around zero and denoted by  $\partial_f F (\Omega, 0)$. We shall peculiarly  look for the values of $\Omega$ where  the kernel   is non trivial. We will be seeing that  the kernel is necessarily simple and all the required assumptions  of the   C-R Theorem are satisfied.  According to the Proposition \ref{string11}, the functional $F:\RR\times B_r\to Y$ is $C^1$ and therefore Gâteaux and Fr\'echet derivatives with respect to $f$ and in the direction $h\in X$ coincide. Now putting together the    formulas \eqref{dfh} and \eqref{gat66} with  $\phi=\textnormal{Id}$, we find
\begin{eqnarray}\label{df00}
\partial_fF(\Omega,0)h(w)&=&\textnormal{Im}\bigg\{\Omega\Big(\overline{h'(w)}+\frac{h(w)}{w}\Big)-{C_\alpha}\frac{\overline{h'(w)}}{w}\fint_\mathbb{T}\frac{d\tau}{\vert w-\tau\vert^\alpha}-\frac{C_\alpha}{w}\fint_\mathbb{T}\frac{h'(\tau)}{\vert w-\tau\vert^\alpha}d\tau\notag\\ &+& \frac{\alpha C_\alpha}{2w}\fint_\mathbb{T}\frac{(w-\tau)\big(\overline{h(w)}-\overline{h(\tau)}\big)}{\vert w-\tau\vert^{\alpha+2}}d\tau+\frac{\alpha C_\alpha}{2w}\fint_\mathbb{T}\frac{(\overline{w}-\overline{\tau})\big(h(w)-h(\tau)\big)}{\vert w-\tau\vert^{\alpha+2}}d\tau\bigg\}\notag\\ &\triangleq& \textnormal{Im}\Big\{\hbox{I}_1(h(w))+\hbox{I}_2(h(w))+\hbox{I}_3(h(w))+\hbox{I}_4(h(w))+\hbox{I}_5(h(w))\Big\} .
\end{eqnarray}
Recall that the spaces $X$ and $Y$ are successively  given by,
$$
X=\Big\{f\in C^{2-\alpha}(\mathbb{T}),\, f(w)=\sum_{n\geq 0}b_n\overline{w}^n, b_n\in \RR,\, w\in\mathbb{T}\Big\}
$$
and
$$
Y=\Big\{g\in C^{1-\alpha}(\mathbb{T}),\, g(w)=i\sum_{n\geq 1}g_n\big(w^n-\overline{w}^n\big), g_n\in \RR,\, w\in\mathbb{T}\Big\}.
$$
To state our main result we shall  introduce a special set $\mathcal{S}$  describing the dispersion relation  which plays a central role in  the bifurcation of non trivial solutions.
\begin{equation}\label{disper}
\mathcal{S}\triangleq \Bigg\{\Omega\in \RR, \,\exists\,  m\geq 2, \quad \Omega=  
\Omega_m^\alpha\triangleq\frac{\Gamma(1-\alpha)}{2^{1-\alpha}\Gamma^2(1-\frac\alpha2)}\bigg(\frac{\Gamma(1+\frac\alpha2)}{\Gamma(2-\frac\alpha2)}-\frac{\Gamma(m+\frac\alpha2)}{\Gamma(m+1-\frac\alpha2)}\bigg)
\Bigg\}.
\end{equation}
 We shall discuss soon  some elementary properties   of this set. Now we state our result.
\begin{proposition}\label{propgh}
The following assertions hold true. 
\begin{enumerate}
\item The kernel  of $\partial_fF(\Omega,0)$ is non trivial if and only if $\Omega=\Omega_m^\alpha\in \mathcal{S}$ and, in this case,  it is   one-dimensional vector space generated by
$$
v_m(w)=\overline{w}^{m-1}.
$$  

\item The range of  $\partial_fF(\Omega_m^\alpha,0)$ is closed in $Y$ and  is of co-dimension one. It  is given by 
$$
R\big(\partial_fF(\Omega_m^\alpha,0)\Big)=\Big\{g\in C^{1-\alpha}(\mathbb{T});\quad g(w)=i\sum_{n\geq1\atop\\ n\neq m}^\infty g_n(w^{n}-\overline{w}^{n}), g_n\in \RR\Big\}.
$$
\item Transversality assumption:   
$$
\partial_\Omega\partial_f F(\Omega_m^\alpha,0)(v_m)\not\in R\big(\partial_fF(\Omega_m^\alpha,0)\Big).
$$
\end{enumerate}
\end{proposition}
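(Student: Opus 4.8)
The plan is to reduce all three assertions to an explicit Fourier‑multiplier form of the linearized operator. Starting from the decomposition \eqref{df00} of $\partial_fF(\Omega,0)h$ into the pieces $\textnormal{I}_1,\dots,\textnormal{I}_5$, I would substitute $h(w)=\sum_{n\ge0}b_n\overline{w}^n$, so that on $\mathbb T$ one has $h'(w)=-\sum_{n\ge1}nb_n\overline{w}^{n+1}$ and $\overline{h'(w)}=-\sum_{n\ge1}nb_nw^{n+1}$, and then compute each integral term by means of the basic integrals of Lemma \ref{lem}: \eqref{In} handles $\textnormal{I}_2$ and $\textnormal{I}_3$, \eqref{Jn} handles $\textnormal{I}_4$, and \eqref{Kn} handles $\textnormal{I}_5$. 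Collecting the contributions frequency by frequency and simplifying the Pochhammer and gamma factors through \eqref{Gamma1}, \eqref{f1s} and \eqref{Poc}, one should arrive at
$$\partial_fF(\Omega,0)h(w)=\frac i2\,b_0\,\Omega\,(w-\overline{w})+\frac i2\sum_{n\ge1}(n+1)\big(\Omega-\Omega_{n+1}^\alpha\big)\,b_n\,\big(w^{n+1}-\overline{w}^{n+1}\big),$$
which shows in particular that $\partial_fF(\Omega,0)$ acts diagonally in the Fourier basis and sends $X$ into $Y$ (the operator‑norm bound is already contained in the weak‑regularity proposition). I expect this identification to be the main obstacle: the cancellations among $\textnormal{I}_2,\dots,\textnormal{I}_5$ and the gamma‑function bookkeeping are the delicate points.

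Granting the multiplier formula, assertion $(1)$ is immediate: $h\in N(\partial_fF(\Omega,0))$ if and only if $b_0\Omega=0$ and $(n+1)(\Omega-\Omega_{n+1}^\alpha)b_n=0$ for every $n\ge1$. The structural ingredient needed here is that $m\mapsto\Omega_m^\alpha$ is strictly increasing, hence the $\Omega_m^\alpha$ are pairwise distinct. To see this, write $\Omega_m^\alpha=\frac{\Gamma(1-\alpha)}{2^{1-\alpha}\Gamma^2(1-\frac\alpha2)}\big(a_1-a_m\big)$ with $a_m\triangleq\frac{\Gamma(m+\frac\alpha2)}{\Gamma(m+1-\frac\alpha2)}$; then $a_{m+1}/a_m=\frac{m+\frac\alpha2}{m+1-\frac\alpha2}<1$ since $\alpha<1$, so $a_m$ decreases strictly, while the prefactor is positive for $\alpha\in(0,1)$; note also $\Omega_1^\alpha=0$. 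Hence, for $\Omega\notin\mathcal S$ (and $\Omega\neq0$, the constant mode $b_0$ entering the kernel only at the excluded translation value $\Omega=0=\Omega_1^\alpha$) all coefficients are forced to vanish, so $N(\partial_fF(\Omega,0))=\{0\}$; whereas for $\Omega=\Omega_m^\alpha$ with $m\ge2$ exactly $b_{m-1}$ is left free, so $N(\partial_fF(\Omega_m^\alpha,0))=\RR\,v_m$ with $v_m(w)=\overline{w}^{m-1}$, which is one‑dimensional.

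For assertion $(2)$, let $g=i\sum_{n\ge1}g_n(w^n-\overline{w}^n)\in Y$. Solving $\partial_fF(\Omega_m^\alpha,0)h=g$ amounts to $\tfrac12\Omega_m^\alpha b_0=g_1$ together with $\tfrac12(n+1)(\Omega_m^\alpha-\Omega_{n+1}^\alpha)b_n=g_{n+1}$ for $n\ge1$. Since $\Omega_m^\alpha\neq0$ and $\Omega_m^\alpha-\Omega_{n+1}^\alpha$ vanishes exactly when $n+1=m$, the only obstruction is $g_m=0$; and because the multiplier $n\mapsto(n+1)(\Omega_m^\alpha-\Omega_{n+1}^\alpha)$ is comparable to $n$ for large $n$ (the sequence $\Omega_n^\alpha$ being strictly increasing to a finite limit $\Omega_\infty^\alpha>\Omega_m^\alpha$, so the operator is of order exactly one — in contrast to the critical case $\alpha=1$), the recovered coefficients yield an $h$ lying one Hölder class above $g$, whence $h=\sum b_n\overline{w}^n\in C^{2-\alpha}(\mathbb T)=X$. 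Therefore $R(\partial_fF(\Omega_m^\alpha,0))=\{g\in Y:\ g_m=0\}$, the kernel of the bounded linear functional $g\mapsto g_m$ on $Y$; this subspace is closed and has codimension one.

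Finally, for the transversality assumption $(3)$, I would specialize to $f=0$ the formula $\partial_\Omega\partial_fF(\Omega,f)h(w)=\textnormal{Im}\{\phi(w)\overline{w}\,\overline{h'(w)}+h(w)\overline{w}\,\overline{\phi'(w)}\}$ from Proposition \ref{string11}, which at $\phi=\textnormal{Id}$ reads $\partial_\Omega\partial_fF(\Omega,0)h(w)=\textnormal{Im}\{\overline{h'(w)}+h(w)\overline{w}\}$. Plugging in $h=v_m=\overline{w}^{m-1}$, so $\overline{h'(w)}=-(m-1)w^m$ and $h(w)\overline{w}=\overline{w}^m$, gives
$$\partial_\Omega\partial_fF(\Omega_m^\alpha,0)v_m(w)=\textnormal{Im}\big\{-(m-1)w^m+\overline{w}^m\big\}=\tfrac i2\,m\,\big(w^m-\overline{w}^m\big),$$
whose $m$‑th Fourier coefficient equals $m/2\neq0$; by the description of the range in $(2)$ this function does not lie in $R(\partial_fF(\Omega_m^\alpha,0))$, which is precisely hypothesis $(4)$ of Theorem \ref{C-R}. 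Equivalently, this is just the $\partial_\Omega$‑derivative of the multiplier coefficient $(n+1)(\Omega-\Omega_{n+1}^\alpha)$ evaluated at $n=m-1$, namely $m$, which is nonzero.
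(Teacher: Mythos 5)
Your overall route is the same as the paper's: diagonalize $\partial_fF(\Omega,0)$ in the Fourier basis via the integrals of Lemma \ref{lem}, read off the kernel from the strict monotonicity of $n\mapsto\Omega_n^\alpha$, and verify transversality by the explicit computation $\partial_\Omega\partial_fF(\Omega_m^\alpha,0)v_m=\tfrac{i}{2}m(w^m-\overline{w}^m)$. Those parts are correct and match the paper, including the multiplier formula \eqref{dfz} and the simplification $\alpha_n+\beta_n=(n+1)\Theta_\alpha\bigl(1-\tfrac{(1+\alpha/2)_n}{(2-\alpha/2)_n}\bigr)$.

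The genuine gap is in assertion $(2)$, at the sentence ``because the multiplier is comparable to $n$ for large $n$, the recovered coefficients yield an $h$ lying one H\"older class above $g$.'' This is exactly the nontrivial analytic content of the surjectivity step, and it does not follow from the size of the multiplier alone: a Fourier multiplier $\lambda_n\sim n$ need not have an inverse mapping $C^{1-\alpha}(\mathbb{T})$ into $C^{2-\alpha}(\mathbb{T})$ — one must exploit its precise structure. Here $b_n=\frac{2g_{n+1}}{(n+1)(\Omega_m^\alpha-\Omega_{n+1}^\alpha)}$ and $\Omega_{n+1}^\alpha=\Theta_\alpha-c\,n^{-(1-\alpha)}+O(n^{-(2-\alpha)})$, so the inverse multiplier is $\frac{1}{n}\cdot\frac{1}{A-d_n}$ with $A=\Omega_m^\alpha-\Theta_\alpha$ and $d_n=O(n^{-(1-\alpha)})$. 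The paper expands $\frac{1}{A-d_n}$ into a finite sum of powers $d_n^j$ plus a remainder with rapidly decaying coefficients, and then must show that (i) the leading piece $\sum_n g_n w^n/n$ is in $C^{2-\alpha}$ — which already requires the boundedness of the Szeg\H{o} projection on $C^{1-\alpha}(\mathbb{T})$, since $g\in Y$ only controls the combination $g_n(w^n-\overline{w}^n)$ — and (ii) each correction $\sum_n g_n d_n^{\,j} w^n/n$ is a convolution of $G$ with a kernel comparable to $\sum_n n^{-(1-\alpha)j}w^n$, whose membership in $L^1(\mathbb{T})$ is proved by an Abel-summation estimate $|K_1(e^{i\theta})|\lesssim|\sin(\theta/2)|^{-\beta}$ for $\beta\in(\alpha,1)$. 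None of this is automatic, and without it you have not shown that the preimage $h$ lies in $X=C^{2-\alpha}(\mathbb{T})$, hence not that the stated subspace equals the range. (A small further remark: at $\Omega=0$ the constant mode does lie in the kernel even though $0\notin\mathcal{S}$; you noticed this, and the paper glosses over the same point, so it is not a defect specific to your argument.)
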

Before proving this result we  collect some properties on the asymptotic behavior of the sequence $\{\Omega_n^\alpha\}$ with respect to $\alpha $ and $n$. This is  summarized in the next lemma.
\begin{lemma}\label{lemz1}
We have the following results.
\begin{enumerate}
\item Let  $n\geq2$, then
$${\lim_{\alpha\to0}\,\Omega_n^\alpha=\frac{n-1}{2n}},\qquad {\lim_{\alpha\to1}}\,\Omega_n^\alpha=\frac{2}{\pi}\sum_{k=1}^{n-1}\frac{1}{2k+1}\cdot
$$
\item For any $\alpha\in (0,1)$, we get  $\Omega_n^\alpha>0$ and $n\mapsto \Omega_n^\alpha$ is strictly increasing.
Moreover, 
$$
\mathcal{S}\subset \Theta_\alpha\Big[\frac{1-\alpha}{2-\frac{\alpha}{2}},1\Big[,
$$
with
$$
\Theta_\alpha\triangleq\frac{2^\alpha\Gamma(1+\frac\alpha2)\Gamma(1-\alpha)}{(2-\alpha)\Gamma^3(1-\frac\alpha2)}\cdot
$$
\item For $\alpha\in (0,1)$ fixed and $n$ sufficiently large,
\begin{equation}\label{As1}
\Omega_n^\alpha=\Theta_\alpha-\big(1-{\alpha}/{2}\big)\Theta_\alpha\frac{e^{\alpha\gamma+c_\alpha}}{n^{1-\alpha}}+O\Big(\frac{1}{n^{2-\alpha}}\Big), \quad 
\end{equation}
where $\gamma$ denotes Euler constant,  $c_\alpha$ is the sum of the series 
$$
c_\alpha\triangleq\sum_{m=1}^\infty\frac{\alpha^{2m+1}}{2^{2m-1}(2m+1)}\zeta(2m+1).
$$
and $s\mapsto \zeta(s)$ is the Riemann zeta function.
\end{enumerate}
\end{lemma}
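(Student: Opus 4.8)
The plan is to treat the three statements separately, all reducing to elementary manipulations of the closed form
$$
\Omega_n^\alpha=\frac{\Gamma(1-\alpha)}{2^{1-\alpha}\Gamma^2(1-\frac\alpha2)}\bigg(\frac{\Gamma(1+\frac\alpha2)}{\Gamma(2-\frac\alpha2)}-\frac{\Gamma(n+\frac\alpha2)}{\Gamma(n+1-\frac\alpha2)}\bigg),
$$
together with the Pochhammer identities \eqref{f1s}--\eqref{Poc} and the known values \eqref{for1}, \eqref{digam}. For statement (1) I would first rewrite the $n$-dependent ratio as $\frac{\Gamma(n+\alpha/2)}{\Gamma(n+1-\alpha/2)}=\frac{(\alpha/2)_n}{(1-\alpha/2)_n}\cdot\frac{\Gamma(\alpha/2)}{\Gamma(1-\alpha/2)}$ and expand everything in a single common prefactor, so that $\Omega_n^\alpha$ becomes an explicit finite product/ratio of Pochhammer symbols. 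As $\alpha\to0$ one uses $\Gamma(1-\alpha)\to1$, $\Gamma(1-\alpha/2)\to1$, and expands each factor to first order in $\alpha$; the leading constant cancels and the $O(\alpha)$ terms assemble (via $\digamma$ at integer and half-integer points, i.e.\ \eqref{digam} at $n=0$ versus general $n$) into the harmonic-type sum $\frac{n-1}{2n}=\frac12\sum_{k=1}^{n-1}\frac1{k(k+1)}$. As $\alpha\to1$, the prefactor $\Gamma(1-\alpha)$ blows up while the bracket vanishes; I would factor $\Gamma(1-\alpha)=\frac{1}{1-\alpha}\Gamma(2-\alpha)$ (using \eqref{Gamma1}) to expose the cancellation, and then the bracket, expanded to first order in $(1-\alpha)$, produces exactly $\frac2\pi\sum_{k=1}^{n-1}\frac1{2k+1}$ after invoking \eqref{digam} with the half-integer arguments $n-\tfrac12$ and $\tfrac12$. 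This is the computation that matches the formula \eqref{dis1} quoted in the introduction, so the answer is forced.

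For statement (2), positivity and monotonicity follow from the fact that the only $n$-dependent piece is $-\frac{\Gamma(n+\alpha/2)}{\Gamma(n+1-\alpha/2)}$, whose ratio of consecutive terms is
$$
\frac{\Gamma(n+1+\alpha/2)\,\Gamma(n+1-\alpha/2)}{\Gamma(n+\alpha/2)\,\Gamma(n+2-\alpha/2)}=\frac{n+\alpha/2}{n+1-\alpha/2}<1\quad\text{for }\alpha\in(0,1),
$$
so $n\mapsto\Gamma(n+\alpha/2)/\Gamma(n+1-\alpha/2)$ is strictly decreasing, hence $n\mapsto\Omega_n^\alpha$ is strictly increasing; comparing $\Omega_2^\alpha$ with the $n\to\infty$ limit gives the two-sided bound, with the lower endpoint $\frac{1-\alpha}{2-\alpha/2}$ coming from the explicit value of $\Omega_2^\alpha$ (use $(\alpha/2)_2$, $(1-\alpha/2)_2$) and the upper endpoint being the limiting value $\Theta_\alpha$ (the $n$-term tends to $0$, leaving the first bracket term, which after using \eqref{Gamma1} on $\Gamma(2-\alpha/2)=(1-\alpha/2)\Gamma(1-\alpha/2)$ is precisely $\Theta_\alpha$); one should also check $\Theta_\alpha<1$ and $\Omega_2^\alpha>0$ to close the interval claim. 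For statement (3) I would write $\Omega_n^\alpha=\Theta_\alpha\bigl(1-\frac{\Gamma(1-\alpha/2)}{\Gamma(1+\alpha/2)}\cdot\frac{\Gamma(n+\alpha/2)}{\Gamma(n+1-\alpha/2)}\bigr)$ and apply the standard asymptotic $\frac{\Gamma(n+a)}{\Gamma(n+b)}=n^{a-b}\bigl(1+\frac{(a-b)(a+b-1)}{2n}+O(n^{-2})\bigr)$ with $a=\alpha/2$, $b=1-\alpha/2$, giving the $n^{\alpha-1}$ decay; the constant $(1-\alpha/2)e^{\alpha\gamma+c_\alpha}$ must then be identified by evaluating $\frac{\Gamma(1-\alpha/2)}{\Gamma(1+\alpha/2)}$ through the Weierstrass/log-series expansion of $\log\Gamma$, whose odd-order terms produce $\sum_{m\ge1}\frac{\alpha^{2m+1}}{2^{2m-1}(2m+1)}\zeta(2m+1)=c_\alpha$ and whose linear term contributes $\alpha\gamma$.

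The main obstacle is bookkeeping rather than conceptual: in parts (1) and (3) one must carefully track which terms in the $\Gamma$-expansions cancel against the normalizing prefactor and which survive, and in particular correctly reduce the surviving digamma combinations to the stated harmonic sums and to the zeta-series $c_\alpha$. I expect the $\alpha\to1$ limit in (1) and the constant identification in (3) to be the most error-prone points, since both involve a delicate cancellation of a divergent factor ($\Gamma(1-\alpha)$, resp.\ the $n$-independent normalization) against a vanishing bracket, and both rely on pushing the Taylor expansion of $\log\Gamma$ one order further than naively needed; everything else is routine once the closed form is rewritten in Pochhammer variables.
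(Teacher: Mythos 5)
Your plan is correct and, for parts (1)($\alpha\to1$) and (2), essentially identical to the paper's proof: the paper also obtains the $\alpha\to 1$ limit by writing the prefactor as $\frac{1}{1-\alpha}$ times something regular and differentiating the (vanishing) bracket at $\alpha=1$ via the logarithmic derivative of $\phi_n(\alpha)=\Gamma(n+\alpha/2)/\Gamma(n+1-\alpha/2)$, and it proves monotonicity from exactly the ratio $\frac{u_{n+1}}{u_n}=\frac{n+\alpha/2}{n+1-\alpha/2}<1$ after rewriting $\Omega_n^\alpha=\Theta_\alpha\bigl(1-\frac{(1+\alpha/2)_{n-1}}{(2-\alpha/2)_{n-1}}\bigr)$. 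Two differences are worth noting. First, for the $\alpha\to0$ limit the paper simply substitutes $\alpha=0$ (everything is continuous there), getting $\frac12\bigl(1-\frac{(n-1)!}{n!}\bigr)=\frac{n-1}{2n}$; your rewriting through $\Gamma(\alpha/2)$ manufactures an artificial pole that must cancel against the zero of $(\alpha/2)_n$, and your remark that the limit is assembled from $O(\alpha)$ digamma terms is a red herring — it is a zeroth-order value. Second, for part (3) the paper does not invoke the standard $\Gamma(n+a)/\Gamma(n+b)\sim n^{a-b}$ asymptotic; it expands $\log\frac{(1+\alpha/2)_n}{(1-\alpha/2)_n}=\sum_{k\le n}\bigl[\log(1+\frac{\alpha}{2k})-\log(1-\frac{\alpha}{2k})\bigr]$, extracts $\alpha\log n+\alpha\gamma+c_\alpha+O(1/n)$ from the harmonic series and the tail of the zeta series, and exponentiates. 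Your route (gamma-ratio asymptotics for the $n$-dependence plus the log-gamma Taylor series to identify $e^{\alpha\gamma+c_\alpha}=\Gamma(1-\alpha/2)/\Gamma(1+\alpha/2)$) is an equivalent and arguably more standard packaging of the same computation, and it cleanly separates the rate from the constant. Finally, two bookkeeping slips in your write-up that you would need to fix in execution: the digamma arguments in the $\alpha\to1$ limit are $n+\frac12$ and $\frac32$ (not $n-\frac12$ and $\frac12$; the off-by-one version gives the wrong harmonic sum), and your displayed identity for part (3) omits the factor $1-\frac\alpha2$ coming from $\Gamma(2-\frac\alpha2)=(1-\frac\alpha2)\Gamma(1-\frac\alpha2)$, although you correctly reinstate it in the constant you aim to identify.
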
 
\begin{proof}
${\bf(1)}$ Recall first that for $n\geq2,$
$$
\Omega_n^\alpha\triangleq\frac{\Gamma(1-\alpha)}{2^{1-\alpha}\Gamma^2(1-\frac\alpha2)}\bigg(\frac{\Gamma(1+\frac\alpha2)}{\Gamma(2-\frac\alpha2)}-\frac{\Gamma(n+\frac\alpha2)}{\Gamma(n+1-\frac\alpha2)}\bigg).
$$
Passing to the limit in this formula when $\alpha$ goes to zero yields
\begin{eqnarray*}
\lim_{\alpha\to 0}\Omega_n^\alpha&=&\frac12\Big(\frac{\Gamma(1)}{\Gamma(2)}-\frac{\Gamma(n)}{\Gamma(n+1)}\Big)\\
&=&\frac12\Big(1-\frac{(n-1)!}{n!}\Big)\\
&=&\frac{n-1}{2n}\cdot
\end{eqnarray*}
As to the second limit, we shall introduce for a fixed $n$ the function
$$
\phi_n(\alpha)=\frac{\Gamma(n+\alpha/2)}{\Gamma(n+1-\alpha/2)}\cdot
$$
Therefore we obtain according to \eqref{Gamma1} and \eqref{for1} and the relation $\phi_n(1)=1,$
\begin{eqnarray*}
\lim_{\alpha\to 1}\Omega_n^\alpha&=&\frac{-1}{\Gamma^2(1/2)}\lim_{\alpha\to 1}\big\{(1-\alpha)\Gamma(1-\alpha)\big\}\lim_{\alpha\to 1}\Big\{\frac{\phi_1(\alpha)-\phi_1(1)}{\alpha-1}-\frac{\phi_n(\alpha)-\phi_n(1)}{\alpha-1}\Big\}\\
&=&\frac{-1}{\pi}\Big\{{\phi_1^\prime(1)}-{\phi_n^\prime(1)}\Big\}.
\end{eqnarray*}
By applying the logarithm function to $\phi_n$ and differentiating with respect to $\alpha$ one obtains the relation
$$
2\frac{\phi_n^\prime(\alpha)}{\phi_n(\alpha)}=\digamma(n+\alpha/2)+\digamma(n+1-\alpha/2).
$$
Now using the fact that $\phi_n(1)=1$ combined with the preceding identity and \eqref{digam}, we find
\begin{eqnarray*}
\lim_{\alpha\to 1}\Omega_n^\alpha
&=&\frac{-1}{\pi}\Big\{{\digamma(3/2)}-{\digamma(n+1/2)}\Big\}\\
&=&\frac{2}{\pi}\sum_{k=1}^{n-1}\frac{1}{2k+1},
\end{eqnarray*}
which is the desired result.
\vspace{0,5cm}

${\bf(2)}$ Using the identities \eqref{Poc} we find the alternative formula
\begin{equation}\label{disper00}
\Omega_n^\alpha\triangleq \Theta_\alpha\bigg(1-\frac{\big(1+\frac{\alpha}{2}\big)_{n-1}}{\big(2-\frac{\alpha}{2}\big)_{n-1}}\bigg),
\end{equation}
with
$$
\Theta_\alpha\triangleq\frac{2^\alpha\Gamma(1+\frac\alpha2)\Gamma(1-\alpha)}{(2-\alpha)\Gamma^3(1-\frac\alpha2)}\cdot
$$
and $(x)_n$ denotes Pokhhammer's symbol introduced in \eqref{Poch}.  Now because  $x\mapsto (x)_{n-1}$ is increasing in the set $\RR_+$ provided that $\alpha<1$ we conclude easily that  $\Omega_n^\alpha>0$. 

To prove that $n\mapsto \Omega_n^\alpha$ is strictly increasing, it suffices according to \eqref{disper00} to check that 
 the sequence $n\mapsto u_n= \frac{\big(1+\frac{\alpha}{2}\big)_{n-1}}{\big(2-\frac{\alpha}{2}\big)_{n-1}}$ is strictly decreasing. This follows from the obvious fact that  for  $\alpha\in(0,1),$ one has
$$
\frac{u_{n+1}}{u_n}=\frac{n+\frac\alpha2}{n+1-\frac\alpha2}<1.
$$
From this it is apparent  that 
$$
\mathcal{S}\subset \big[\Omega_2^\alpha,\lim_{n\to\infty}\Omega_n^\alpha[\subset  \Theta_\alpha\Big[\frac{1-\alpha}{2-\frac{\alpha}{2}},1\Big[.
$$ 
Note that we have used in the last limit that  $\displaystyle{\lim_{n\to\infty}u_n=0}$ which can be deduced for instance from the  proof of the point ${\bf{(3)}}$ of this lemma.

\vspace{0,5cm}

{$\bf{(3)}$} First recall that  Riemann zeta  function is defined by 
$$
\zeta(s)=\sum_{n=1}^\infty\frac{1}{n^s}, s>1.
$$
To get the required asymptotic behavior we shall first study the sequence,
\begin{eqnarray*}
U_n\triangleq\log\bigg(\frac{\big(1+\frac{\alpha}{2}\big)_{n}}{\big(1-\frac{\alpha}{2}\big)_{n}}\bigg).
\end{eqnarray*}
Making use of the definition of $(x)_n$, we can rewrite this sequence in the manner
$$
U_n=\sum_{k=1}^{n}\Big\{\log\Big(1+\frac{\alpha}{2k}\Big)-\log\Big(1-\frac{\alpha}{2k}\Big)\Big\}.
$$
Using the Taylor expansion  of $\log(1+x)$ around zero one gets
\begin{eqnarray*}
U_n&=&\sum_{k=1}^{n}\Big\{2\sum_{m=0}^\infty \frac{1}{2m+1}\Big(\frac{\alpha}{2k}\Big)^{2m+1}\Big\}\\ &=& \sum_{k=1}^{n}\frac{\alpha}{k}+2\sum_{m=1}^\infty\frac{\alpha^{2m+1}}{2^{2m}(2m+1)}\sum_{k=1}^{n} \frac{1}{k^{2m+1}}
\\ &=& \sum_{k=1}^{n}\frac{\alpha}{k}+2\sum_{m=1}^\infty\bigg(\frac{\alpha^{2m+1}}{2^{2m}(2m+1)}\zeta(2m+1)+O\Big(\frac{1}{n^{2m}}\Big)\bigg)\\
&=&\sum_{k=1}^{n}\frac{\alpha}{k}+c_\alpha+O\Big(\frac{1}{n^2}\Big).
\end{eqnarray*}
Note that we have used the following estimate for the remainder term of the zeta function
$$
\sum_{k=n+1}^{\infty} \frac{1}{k^{2m+1}}\le \frac{1}{n^{2m}}\cdot
$$
Now we use the classical expansion  of the harmonic series
$$
\sum_{k=1}^{n}\frac{1}{k}=\log n+\gamma+O\Big(\frac{1}{n}\Big)
$$
with $\gamma $ the Euler constant. Therefore we get
$$
U_n=\alpha\log n+\alpha\gamma+c_\alpha+O\Big(\frac{1}{n}\Big)
$$
and consequently by raising to the exponential we find
\begin{eqnarray*}
e^{U_n}&=&\frac{\big(1+\frac{\alpha}{2}\big)_{n}}{\big(1-\frac{\alpha}{2}\big)_{n}}\\
&=& e^{\alpha\gamma+c_\alpha}n^\alpha e^{O(1/n)}\\
&=&e^{\alpha\gamma+c_\alpha}n^\alpha+O\Big(\frac{1}{n^{1-\alpha}}\Big).
\end{eqnarray*}
It is apparent that
\begin{eqnarray*}
\frac{\big(1+\frac{\alpha}{2}\big)_{n-1}}{\big(2-\frac{\alpha}{2}\big)_{n-1}}&=&\frac{1-\frac\alpha2}{n-\frac\alpha2}\frac{\big(1+\frac{\alpha}{2}\big)_{n-1}}{\big(1-\frac{\alpha}{2}\big)_{n-1}}\\
&=&\frac{1-\frac\alpha2}{n-\frac\alpha2}e^{U_{n-1}}
\end{eqnarray*}
and consequently by making appeal to  the formula \eqref{disper00} we obtain
\begin{eqnarray*}
\Omega_n^\alpha&=& \Theta_\alpha\Big(1-\frac{1-\frac\alpha2}{n-\frac\alpha2}e^{U_{n-1}}\Big)\\
&=& \Theta_\alpha\Big(1-\big({1-\frac\alpha2}\big)\frac{e^{\alpha\gamma+c_\alpha}}{n^{1-\alpha}}\Big)+O\Big(\frac{1}{n^{2-\alpha}}\Big).
\end{eqnarray*}
This concludes the proof of Lemma \ref{lemz1}.
\end{proof}
In what follows we shall give the proof of the Proposition \ref{propgh}.
\begin{proof}

${\bf{(1)}}$ 
We begin by calculating $\hbox{I}_1(h)$ in \eqref{df00} which is easy compared to the other terms. Let $h\in X$ taking the form $\displaystyle{h(w)=\sum_{n\geq0}\frac{b_n}{{w}^n}}$, then straightforward computations give
\begin{equation}\label{i001}
\hbox{I}_1(h(w))=\Omega\sum_{n\geq0}\Big( b_n \overline{w}^{n+1}-nb_n w^{n+1}\Big).
\end{equation}
To compute  the second term $I_2(h(w))$ we write
\begin{eqnarray*}
\hbox{I}_2(h(w))&\triangleq&-{C_\alpha}\overline{w}\,{\overline{h'(w)}}\fint_\mathbb{T}\frac{d\tau}{\vert w-\tau\vert^\alpha}\notag\\ &=&{C_\alpha}\sum_{n\geq 1}nb_nw^{n}\fint_\mathbb{T}\frac{d\tau}{\vert w-\tau\vert^\alpha}\cdot
\end{eqnarray*}
Applying the formula \eqref{In} with $n=0$ we get
\begin{eqnarray}\label{i3}
\hbox{I}_2(h(w))= \frac{\alpha C_\alpha\Gamma(1-\alpha)}{(2-\alpha)\Gamma^2(1-\alpha/2)}\sum_{n\geq 1}nb_nw^{n+1}.
\end{eqnarray}
Regarding  the third term $\hbox{I}_3(h(w))$ it may be rewritten in the manner
\begin{eqnarray*}
\hbox{I}_3(h(w)) &\triangleq&-{C_\alpha}\overline{w}\fint_\mathbb{T}\frac{h'(\tau)}{\vert w-\tau\vert^\alpha}d\tau\notag\\ &=&{C_\alpha}\sum_{n\geq 1}nb_n\overline{w}\fint_\mathbb{T}\frac{\overline{\tau}^{n+1}}{\vert w-\tau\vert^\alpha}d\tau.\notag\\ \end{eqnarray*}
Using change of variables allows to get
$$
\fint_\mathbb{T}\frac{\overline{\tau}^{n+1}}{\vert w-\tau\vert^\alpha}d\tau=\overline{w}^n{\fint_\mathbb{T}\frac{{\tau}^{n-1}}{\vert 1-\tau\vert^\alpha}}d\tau
$$
which yields in view of  the formula  \eqref{In} to the expression
\begin{equation}\label{i2}
\hbox{I}_3(h(w))=\frac{C_\alpha\Gamma(1-\alpha)}{\Gamma^2(1-\alpha/2)}\sum_{n\geq 1}nb_n\frac{\big(\frac{\alpha}{2}\big)_n}{\big(1-\frac{\alpha}{2}\big)_n}\overline{w}^{n+1}.
\end{equation}
Concerning the term $I_4(h(w))$ we start with the expansion,
\begin{eqnarray}\label{I4}
\hbox{I}_4(h(w))&\triangleq& \frac{\alpha C_\alpha}{2}\fint_\mathbb{T}\frac{(w-\tau)\big(\overline{h(w)}-\overline{h(\tau)}\big)}{w\vert w-\tau\vert^{\alpha+2}}d\tau\notag\\ &=& \frac{\alpha C_\alpha}{2}\sum_{n\geq 1}b_n\fint_\mathbb{T}\frac{(w-\tau)(w^{n}-\tau^{n})}{w\vert w-\tau\vert^{\alpha+2}}d\tau.
\end{eqnarray}
Hence,  using the identity  \eqref{Jn} one gets
\begin{eqnarray}\label{i4}
\hbox{I}_4(h(w))&=& \frac{\alpha(1+\frac{\alpha}{2})}{2(2-\alpha)} \frac{C_\alpha\Gamma(1-\alpha)}{\Gamma^2(1-\alpha/2)}\sum_{n\geq 1}b_n\bigg(1-\frac{\big(2+\frac{\alpha}{2}\big)_n}{\big(2-\frac{\alpha}{2}\big)_n}\bigg)w^{n+1}.
\end{eqnarray}
It remains to compute the last term $\hbox{I}_5$ of \eqref{der1} which can be written in the form
\begin{eqnarray}\label{h45}
\nonumber \hbox{I}_5(h(w))&=& \frac{\alpha C_\alpha}{2}\fint_\mathbb{T}\frac{(\overline{w}-\overline{\tau})\big(h(w)-h(\tau)\big)}{w\vert w-\tau\vert^{\alpha+2}}d\tau\\ 
\nonumber&=& \frac{\alpha C_\alpha}{2}\sum_{n\geq 1}b_n\fint_\mathbb{T}\frac{(\overline{w}-\overline{\tau})(\overline{w}^{n}-\overline{\tau}^{n})}{w\vert w-\tau\vert^{\alpha+2}}d\tau.
\end{eqnarray}
Using the  identity \eqref{Kn} gives
\begin{eqnarray}\label{i5}
\hbox{I}_5 (h(w))&=& -\frac{\alpha C_\alpha \Gamma(1-\alpha)}{4\Gamma^2(1-\alpha/2)}\sum_{n\geq 1}b_n\bigg(1-\frac{\big(\frac{\alpha}{2}\big)_n}{\big(-\frac{\alpha}{2}\big)_n}\bigg)\overline{w}^{n+1}.
\end{eqnarray}
Collecting the  identities \eqref{i2}, \eqref{i5} and using \eqref{f1s} we find 
\begin{eqnarray*}
\hbox{I}_3(h(w))+\hbox{I}_5(h(w))&=&\frac{C_\alpha\Gamma(1-\alpha)}{2\Gamma^2(1-\alpha/2)}\sum_{n\geq 1}b_n\bigg(\frac{2n\big(\frac{\alpha}{2}\big)_n}{\big(1-\frac{\alpha}{2}\big)_n}-\frac{\alpha }{2}-\frac{\big(-\frac{\alpha}{2}\big)\big(\frac{\alpha}{2}\big)_n}{\big(-\frac{\alpha}{2}\big)_{n}}\bigg)\overline{w}^{n+1}\\ &=& \frac{C_\alpha\Gamma(1-\alpha)}{2\Gamma^2(1-\alpha/2)}\sum_{n\geq 1}b_n\bigg(\frac{2n\big(\frac{\alpha}{2}\big)_n}{\big(1-\frac{\alpha}{2}\big)_n}-\frac{\alpha }{2}-\big(n-\frac{\alpha}{2}\big)\frac{\big(\frac{\alpha}{2}\big)_n}{\big(1-\frac{\alpha}{2}\big)_{n}}\bigg)\overline{w}^{n+1}\\
&=&\frac{C_\alpha\Gamma(1-\alpha)}{2\Gamma^2(1-\alpha/2)}\sum_{n\geq 1}b_n\bigg(-\frac{\alpha }{2}+\big(n+\frac{\alpha}{2}\big)\frac{\big(\frac{\alpha}{2}\big)_n}{\big(1-\frac{\alpha}{2}\big)_{n}}\bigg)\overline{w}^{n+1}\\
&=&-\frac{C_\alpha\Gamma(1-\alpha)}{2\Gamma^2(1-\alpha/2)}\sum_{n\geq 1}b_n\bigg(\frac{\alpha }{2}-\frac{\big(\frac{\alpha}{2}\big)_{n+1}}{\big(1-\frac{\alpha}{2}\big)_{n}}\bigg)\overline{w}^{n+1}\\ &\triangleq& -\sum_{n\geq 1}b_n\beta_n\overline{w}^{n+1},
\end{eqnarray*}
with
$$\beta_n=\frac{C_\alpha\Gamma(1-\alpha)}{2\Gamma^2(1-\alpha/2)}
\bigg(\frac{\alpha }{2}-\frac{\big(\frac{\alpha}{2}\big)_{n+1}}{\big(1-\frac{\alpha}{2}\big)_{n}}\bigg).
$$
Now by summing up  \eqref{i3} and \eqref{i4} we deduce that,
\begin{eqnarray*}
\hbox{I}_2(h(w))+\hbox{I}_4(h(w))&=&  \frac{\alpha C_\alpha\Gamma(1-\alpha)}{2(2-\alpha)\Gamma^2(1-\alpha/2)}\sum_{n\geq 1}b_n\bigg(2n+1+\frac{\alpha}{2}-\frac{(1+\frac{\alpha}{2})\big(2+\frac{\alpha}{2}\big)_n}{\big(2-\frac{\alpha}{2}\big)_n}\bigg)w^{n+1}\\&=&  \frac{\alpha C_\alpha\Gamma(1-\alpha)}{2(2-\alpha)\Gamma^2(1-\alpha/2)}\sum_{n\geq 1}b_n\bigg(2n+1+\frac{\alpha}{2}-\frac{\big(1+\frac{\alpha}{2}\big)_{n+1}}{\big(2-\frac{\alpha}{2}\big)_n}\bigg)w^{n+1}\\ &\triangleq& \sum_{n\geq 1}b_n\alpha_n{w}^{n+1},
\end{eqnarray*}
with
$$
\alpha_n\triangleq \frac{\alpha C_\alpha\Gamma(1-\alpha)}{2(2-\alpha)\Gamma^2(1-\alpha/2)}\bigg(2n+1+\frac{\alpha}{2}-\frac{\big(1+\frac{\alpha}{2}\big)_{n+1}}{\big(2-\frac{\alpha}{2}\big)_n}\bigg).
$$
Then inserting \eqref{i001} and   the two preceding identities into \eqref{df00} one can readily verify that
\begin{eqnarray}\label{df}
\partial_fF(\Omega,0)(h)(w)&=&\textnormal{Im}\bigg\{\Omega b_0\overline{w}- \sum_{n\geq 1}b_n\Big(n\Omega -\alpha_n\Big)w^{n+1}+\sum_{n\geq 1}b_n\Big(\Omega-\beta_n\Big)\overline{w}^{n+1} \bigg\}\notag\\ &=&\frac{\Omega b_0}{2} i\big(w-\overline{w}\big)+i\sum_{n\geq 1}\frac{b_n}{2}\Big(\big(n+1\big)\Omega-\big(\alpha_n+\beta_n\big)\Big)\Big(w^{n+1}-\overline{w}^{n+1}\Big).
\end{eqnarray}
By using \eqref{f1s} combined with the foregoing expressions for $\alpha_n$ and $\beta_n$ one may write, 
\begin{eqnarray}\label{iden1}
\alpha_n+\beta_n &=& \frac{\alpha C_\alpha\Gamma(1-\alpha)}{2(2-\alpha)\Gamma^2(1-\alpha/2)}\bigg(2n+2-\frac{\big(1+\frac{\alpha}{2}\big)_{n+1}}{\big(2-\frac{\alpha}{2}\big)_n}-\frac{(1-\frac{\alpha}{2})\big(\frac{\alpha}{2}\big)_{n+1}}{\frac{\alpha}{2}\big(1-\frac{\alpha}{2}\big)_{n}}\bigg)\notag\\ &=& \frac{\alpha C_\alpha\Gamma(1-\alpha)}{2(2-\alpha)\Gamma^2(1-\alpha/2)}\bigg(2n+2-\frac{\big(1+\frac{\alpha}{2}\big)_{n+1}}{\big(2-\frac{\alpha}{2}\big)_n}-\frac{\big(1+\frac{\alpha}{2}\big)_{n}}{\big(2-\frac{\alpha}{2}\big)_{n-1}}\bigg)\notag\\ &=& \frac{\alpha C_\alpha\Gamma(1-\alpha)}{(2-\alpha)\Gamma^2(1-\alpha/2)}\big(n+1\big)\bigg(1-\frac{\big(1+\frac{\alpha}{2}\big)_{n}}{\big(2-\frac{\alpha}{2}\big)_{n}}\bigg).
\end{eqnarray}
Coming back to the definition of $C_\alpha$, see for instance Proposition \ref{prop-bound}, and setting
$$
\Theta_\alpha\triangleq\frac{\alpha C_\alpha\Gamma(1-\alpha)}{(2-\alpha)\Gamma^2(1-\frac\alpha2)}=\frac{\alpha\Gamma(\frac\alpha2)\Gamma(1-\alpha)}{2^{1-\alpha}(2-\alpha)\Gamma^3(1-\frac\alpha2)},
$$
one finds  that
$$
\alpha_n+\beta_n=\Theta_\alpha\big(n+1\big)\bigg(1-\frac{\big(1+\frac{\alpha}{2}\big)_{n}}{\big(2-\frac{\alpha}{2}\big)_{n}}\bigg).
$$
Making appeal to the definition \eqref{disper00}, the linearized operator \eqref{df} takes the form,
\begin{equation}\label{dfz}
\partial_fF(\Omega,0)(h)(w)=\frac{\Omega b_0}{2} i\big(w-\overline{w}\big)+\frac12i\sum_{n\geq 1}\big(n+1\big){b_n}\Big(\Omega-\Omega_{n+1}^\alpha\Big)\Big(w^{n+1}-\overline{w}^{n+1}\Big).
\end{equation}
We should mention in passing that the linearized operator has a special structure: it acts as a Fourier multiplier and as we shall see this will be very useful in the explicit computations for the kernel and the range of this operator. Now let us look for the values of $\Omega$ corresponding to non trivial kernel. 
It is easy to see that this will be the case if and only if $\Omega$ belongs to the dispersion set $\mathcal{S}$ introduced in \eqref{disper}. This corresponds to the values of $\Omega$ such that there exists 
 $m\geq 1$ with
 \begin{eqnarray*}
\Omega &=&\Omega_{m+1}^\alpha \\ &=&\Theta_\alpha\bigg(1-\frac{\big(1+\frac{\alpha}{2}\big)_{m}}{\big(2-\frac{\alpha}{2}\big)_{m}}\bigg).
\end{eqnarray*}
From  Lemma \ref{lemz1}-$(2)$ the sequence $n\mapsto \Omega_n^\alpha$ is strictly increasing and therefore  for any $n\neq m$  
$$
(1+n)\big(\Omega_{m+1}^\alpha-\Omega_{n+1}^\alpha\big)\neq0.
$$
From these last facts it is apparent that the kernel of $\partial_fF(\Omega_{m+1}^\alpha,0)$  is one-dimensional vector space  generated by the function $v_m(w)=\overline{w}^m$.The claim of Proposition \ref{propgh} follows by shifting the index $m.$ \\ 

\vspace{0,2cm}

${\bf(2)}$ Now we are going to show that for any $m\geq 2$ the range $R(\partial_fF(\Omega_m^\alpha,0))$ coincides with the  subspace
$$
{Z_m}\triangleq\Big\{g\in C^{1-\alpha}(\mathbb{T});\quad g(w)=\sum_{n\geq1\atop\\\, n\neq m}^\infty ig_n(w^{n}-\overline{w}^{n}), g_n\in \RR\Big\}.
$$
Note that this sub-space is closed and of co-dimension one in the ambient  space $Y$. In addition, one may easily deduce  from \eqref{df} the trivial inclusion  $R(\partial_fF(\Omega_m^\alpha,0))\subset Z_m$ and therefore it remains to check just the converse. For this end, let $g\in Z_m$ we shall look for a pre-image $h(w)=\displaystyle{\sum_{n\geq 0}}b_n\overline{w}^n\in X$ satisfying $\partial_fF(\Omega_m^\alpha,0)(h)=g$. From the relation \eqref{dfz} this is equivalent to
$$
\frac{\Omega_m^\alpha}{2} b_0=g_1\quad\hbox{and}\quad  \frac{nb_{n-1}}{2}\Big(\,\Omega_m^\alpha-\Omega_n^\alpha\Big)=g_{n},\,\, \forall n\geq 2,\, n\neq m.
$$
This determines uniquely the sequence $(b_n)_{n\neq m-1}$ and  one has
$$
b_0=\frac{2 g_1}{\Omega_m^\alpha}\quad\hbox{and}\quad b_n=\frac{2g_{n+1}}{(n+1)\big(\Omega_m^\alpha-\Omega_{n+1}^\alpha\big)},\quad \forall n\neq m-1, n\geq1.
$$
However the value $b_{m-1}$ is free and it can be taken   zero.
Then the proof of  $h\in X$ reduces to show  that  $h\in C^{2-\alpha}\big(\mathbb{T}\big)$. For this end, it suffices to show that the function $ \displaystyle{H(w)=\sum_{n\geq m}b_n\overline{w}^n}$ belongs to this latter H\"{o}lder  space. First we shall transform $H$ in the form
\begin{eqnarray*}
H(w)&=&2\sum_{n\geq m}\frac{g_{n+1}}{(n+1)\big(\Omega_{m}^\alpha-\Omega_{n+1}^\alpha\big)}\, \overline{w}^n\\ &=&
   2w\sum_{ n\geq m+1}\frac{g_{n}}{n\big(\Omega_m^\alpha-\Omega_n^\alpha\big)}\overline{w}^n.
   \end{eqnarray*}
 Using \eqref{As1} one may write down
 \begin{eqnarray*}
H(w)&=& 2w\sum_{ n\geq m+1}\frac{g_{n}}{n\Big(\Omega_m^\alpha-\Theta_\alpha-d_n\Big)}\overline{w}^n,
\end{eqnarray*}
where
\begin{equation}\label{Ass11}
 d_n=-\Theta_\alpha\big(1-\alpha/2\big)\frac{e^{\alpha\gamma+c_\alpha}}{n^{1-\alpha}}+O\big(\frac{1}{n^{2-\alpha}}\big)\cdot
\end{equation}
Denote  $A=\Omega_m^\alpha-\Theta_\alpha$ then one may use  the general  decomposition: for $k\in \NN,$
$$
\frac{1}{A-d_n}=\frac{A^{-k-1}d_n^{k+1}}{A-d_n}+\sum_{j=0}^kA^{-j-1}{d_n^j}.
$$
This allows to rewrite  $H(w) $ in the manner
\begin{eqnarray*}
H(w)&=&2A^{-k-1}w\sum_{n\geq m+1}\frac{g_nd_n^{k+1}}{n(A-d_n)}\overline{w}^{n}+2w\sum_{j=0}^kA^{-j-1}\sum_{n\geq m+1}\frac{g_nd_n^{j}}{n}\overline{w}^{n}\\
&\triangleq&2A^{-k-1}w H_{k+1}(\overline{w})+2w\sum_{j=0}^kA^{-j-1}L_j(\overline{w}).
\end{eqnarray*}
Fix  $k$ such that $(1-\alpha)(k+1)>2$ then $H_{k+1}\in C^{2}(\mathbb{T}).$ Indeed
as the sequence $(g_n)_n$ is bounded then we get by \eqref{Ass11}
$$
\Big|\frac{g_nd_n^{k+1}}{n(A-d_n)}\Big|\lesssim \frac{|d_n|^{k+1}}{n}\lesssim\frac{1}{n^{1+(1-\alpha)(k+1)}}\cdot
$$
Therefore the regularity follows from the polynomial decay of the Fourier coefficients.
Concerning the estimate of $L_j$ we shall restrict the analysis to $j=0$ and $j=1$ and the higher  terms can be treated in a similar way. We write
\begin{eqnarray*}
L_0({w})&=&\sum_{n\geq m+1}\frac{g_n}{n}{w}^{n}.
\end{eqnarray*}
Using Cauchy-Schwarz  we deduce that
\begin{eqnarray*}
\Vert L_0\Vert_{L^\infty} &\lesssim & \sum_{n\geq m+1}\frac{\vert g_{n}\vert}{n}\\ &\lesssim & \bigg(\sum_{n\geq 1}\frac{1}{n^2}\bigg)^{1/2} \Big(\sum_{n\geq m+1}\vert g_{n}\vert^2\Big)^{1/2}\\ & \lesssim &\Vert g\Vert_{L^2}.
\end{eqnarray*}
Hence, by the embedding $ C^{1-\alpha}\big(\mathbb{T}\big)\hookrightarrow L^\infty\big(\mathbb{T}\big) \hookrightarrow L^2\big(\mathbb{T}\big)$ we conclude that
$$
\Vert L_0\Vert_{L^\infty} \lesssim \Vert g\Vert_{1-\alpha}.
$$
It remains to prove that $L_0^\prime\in C^{1-\alpha}(\mathbb{T})$. For this end one need first to check that one can differentiate   the series term by term. Fix $N\geq m+1$ and define
$$
L_0^N(w)\triangleq\sum_{n= m+1}^N\frac{g_n}{n}{w}^{n}.
$$
Then it is obvious from Cauchy-Schwarz inequality  that 
\begin{equation}\label{uni1}
\lim_{N\to\infty}\|L_0^N-L_0\|_{L^\infty(\mathbb{T})}=0.
\end{equation}
Now differentiating $L_0^N$ term by term one should get
\begin{eqnarray*}
(L_0^N)^\prime(w)&=&\overline{w}\sum_{n= m+1}^N{g_n}{w}^{n}\\
&\triangleq &\overline{w}\,  G_N(w).
\end{eqnarray*}
Assume for a while that  $\displaystyle{w\mapsto G(w)=\sum_{n\geq m+1}{g_n}{w}^{n}}$ belongs to $C^{1-\alpha}(\mathbb{T})$, then   by virtue of a classical result on Fourier series one gets
\begin{equation*}
\lim_{N\to\infty}\|G_N-G\|_{L^\infty(\mathbb{T})}=0
\end{equation*}
and consequently 
\begin{equation}\label{uni2}
\lim_{N\to\infty}\|(L_0^N)^\prime-\overline{w}\,  G\|_{L^\infty(\mathbb{T})}=0.
\end{equation}
Putting together \eqref{uni1} and \eqref{uni2} we obtain that $L_0$ is differentiable and
$$
L_0^\prime(w)=\overline{w}\,  G(w),\quad w\in \mathbb{T}.
$$
This concludes that $L_0\in C^{2-\alpha}.$ Now to complete rigorously the reasoning it remains to prove the preceding claim asserting that $G\in C^{1-\alpha}(\mathbb{T}).$ Actually,  this  is based on the continuity of Szeg\"o projection 
$$\Pi:\sum_{n\in \mathbb{Z}}a_n w^n\mapsto \sum_{n\in \mathbb{N}}a_n w^n
$$
on H\"{o}lder spaces $C^\varepsilon, \varepsilon\in (0,1).$ To see this we write
$$
G(w)=\Pi\big(-i\,g(w)-\sum_{n=0}^m g_n w^n\big).
$$
From which we  deduce that 
\begin{eqnarray}\label{zeg}
\nonumber\| G\|_{C^{1-\alpha}(\mathbb{T})}&\le &C\Big(\|g\|_{C^{1-\alpha}}+\sum_{n=0}^m|g_n|\|w^n\|_{C^{1-\alpha}}\Big)\\
\nonumber &\le&C_m\big(\|g\|_{C^{1-\alpha}}+\|g\|_{L^2}\big)\\
&\le&C_m\|g\|_{C^{1-\alpha}}
\end{eqnarray}
and this concludes the proof of the  claim. \\
As to  the term $L_1$ we write down by the definition 
\begin{eqnarray*}
L_1({w})&=&\sum_{n\geq m+1}\frac{g_n d_n}{n}{w}^{n}.
\end{eqnarray*}
As before we can easily get $L_1\in L^\infty$ and we shall  check that $L_1^\prime \in C^{1-\alpha}\big(\mathbb{T}\big).$ Arguing in a similar way to $L_0$ we can differentiate term by term the series defining $L_1$ leading to
$$
w\,L_1^\prime({w})=  \sum_{n\geq m+1}{ g_n d_n}{w}^{n}.
$$
We shall write down this series in the convolution form. With the notation  $w=e^{i\theta},$ we may write
\begin{eqnarray*}
w\,L_1^\prime({w})&=&(K* G)(w),\\
&=&\frac{1}{2\pi}\int_{0}^{2\pi}K(e^{i\eta})G(e^{i(\theta-\eta)})d\eta
\end{eqnarray*}
where
$$
K(w)\triangleq \sum_{n\geq m+1}{ d_n}{w}^{n}.
$$
Making use of  the definition \eqref{Ass11} we find the expansion 
\begin{eqnarray*}
K(w)
&=&-\Theta_\alpha\big(1-\alpha/2\big)e^{\alpha\gamma+c_\alpha}\sum_{n\geq m+1}\frac{w^n}{n^{1-\alpha}}+\sum_{n\geq m+1}O\big(\frac{1}{n^{2-\alpha}}\big) w^n.\\
&\triangleq&-\Theta_\alpha\big(1-\alpha/2\big)e^{\alpha\gamma+c_\alpha} K_1(w)+K_2(w).
\end{eqnarray*}
The second term is easy to analyze because we have an absolute series as follows,
$$
\|K_2\|_{L^\infty}\lesssim \sum_{n\geq m+1}\frac{1}{n^{2-\alpha}}\le C
$$
and therefore $K_2\in L^1(\mathbb{T})$. It suffices now to combine this fact  with the classical convolution law $L^1(\mathbb{T})*C^{1-\alpha}(\mathbb{T})\to C^{1-\alpha}(\mathbb{T})$ with \eqref{zeg}.
Next, we shall concentrate on the  first  term $K_1*G$  and prove that it belongs to $C^{1-\alpha}(\mathbb{T})$. For  this end  it is enough to show that  $K_1\in L^1(\mathbb{T})$ for $\alpha\in [0,1[$ which is more tricky. This claim is an immediate consequence of a more precise estimate: for any $\beta\in (\alpha,1)$
\begin{equation}\label{kern1}
|K_1(e^{i\theta})|\lesssim\frac{1}{\sin^\beta(\frac\theta2)},\quad\forall \theta\in (0,2\pi)\cdot
\end{equation}
This estimate sounds classical and for the convenience of the reader we shall give here a complete proof.
The basic tool is  Abel transform.  We set 
$$K_1^n(w)\triangleq\sum_{k=m+1}^n\frac{w^k}{k^{1-\alpha}}\quad\hbox{and}\quad U_n(w)\triangleq\sum_{k=0}^n w^k.
$$ 
Then it is apparent that
\begin{eqnarray*}
K_1^n(w)&\triangleq&\sum_{k=m+1}^n\frac{U_k(w)-U_{k-1}(w)}{k^{1-\alpha}}\\
&=&\sum_{k=m+1}^n\frac{U_k(w)}{k^{1-\alpha}}-\sum_{k=m}^{n-1}\frac{U_{k}(w)}{(1+k)^{1-\alpha}}\\
&=&\sum_{k=m+1}^{n-1}{U_k(w)}\Big(\frac{1}{k^{1-\alpha}}-\frac{1}{(1+k)^{1-\alpha}}\Big)+\frac{U_n(w)}{n^{1-\alpha}}-\frac{U_{m}(w)}{(m+1)^{1-\alpha}}\\
&\triangleq&K_{1,1}^n(w)+K_{1,2}^n(w)+K_{1,3}(w).
\end{eqnarray*}
The last term is bounded independently of $n$ and $w$. For the second term, it converges to zero as $n$ goes to infinity for any  $w\in \mathbb{T}\backslash\{1\}$. This follows easily from the estimate,
\begin{eqnarray*}
|K_{1,2}^n(w)|&\le& \frac{|1-w^{n+1}|}{|1-w|}\frac{1}{n^{1-\alpha}}\\
&\le&\frac{2}{|1-w|}\frac{1}{n^{1-\alpha}}\cdot
\end{eqnarray*}
As regards  the first term $K_{1,1}^n$, we shall use the mean value theorem  through the simple fact
$$
0\le\frac{1}{k^{1-\alpha}}-\frac{1}{(1+k)^{1-\alpha}}\lesssim\frac{1}{k^{2-\alpha}}\cdot
$$
Hence we get
$$
|K_{1,1}^n(w)|\lesssim\sum_{k=m+1}^{n-1}\frac{|U_k(w)|}{k^{2-\alpha}}\cdot
$$
Now we use the classical estimates
$$
|U_k(w)|\le k+1,\quad |U_k(w)|\le \frac{1}{|\sin\frac\theta2|},\,\, w=e^{i\theta}.
$$
By an obvious  convexity inequality we get for any $\beta\in [0,1]$
$$
|U_k(w)|\le \frac{(k+1)^{1-\beta}}{|\sin\frac\theta2|^{\beta}}
$$
and therefore
$$
|K_{1,1}^n(w)|\lesssim\frac{1}{|\sin\frac\theta2|^{\beta}}\sum_{k=m+1}^{n-1}\frac{1}{k^{1-\alpha+\beta}}\cdot
$$
The partial sum of the series converges  provided that we choose $\beta\in (\alpha,1)$. Collecting the preceding estimates and passing to the limit when $n$ goes to infinity we may write,
$$
|K_{1}(w)|\lesssim\frac{1}{|\sin\frac\theta2|^{\beta}}
$$
and this completes the proof of the inequality \eqref{kern1}. 

\vspace{0,3cm}

${\bf{(3)}}$ Now, we intend to  check the transversality assumption. According to the continuity property of the second derivative $\partial_\Omega\partial_f F$ seen in Proposition \ref{string11}, this assumption reduces to 
$$
\big\{\partial_\Omega\partial_f F(\Omega,0)(h)\big\}_{\Big|\Omega=\Omega_m^\alpha, h=v_m}\not\in R\big(\partial_fF(\Omega_m^\alpha,0)\Big).
$$
Differentiating \eqref{df00} with respect to $\Omega$ one gets
$$
\partial_\Omega\partial_fF(\Omega,0)(h)(w)=\textnormal{Im}\Big\{\overline{h'}(w)+\overline{w}{h(w)}\Big\}.
$$
Then obviously
$$
\partial_\Omega\partial_fF(\Omega_m^\alpha,0)(\overline{w}^{m-1})=i\frac{m}{2}\big(w^{m}-\overline{w}^{m}\big).
$$
which is not in the range of $\partial_fF(\Omega_m^\alpha,0)$ as it was described in the part $(2)$ of \mbox{Proposition \ref{propgh}.}
\end{proof}
\section{$m-$fold symmetry}  
Now we are ready to complete the proof of Theorem \ref{thmV2} started and developed throughout  the preceding sections. We have gathered all the required elements to apply  Theorem \ref{C-R} of Crandal-Rabinowitz. Combining  Proposition \ref{string11} and Proposition \ref{propgh} we deduce the existence of non trivial curves $\{\mathcal{C}_m, m\geq2\}$ bifurcating at the points $\Omega_m^\alpha$ of the dispersion set $\mathcal{S}$ introduced in \eqref{disper}. Each point of the branch $\mathcal{C}_m$ represents a V-state and we shall now see that it is an $m$-fold symmetric in a similar way to the case of the incompressible Euler equations.  This will be done by showing the bifurcation in spaces including the $m$-fold symmetry. To be more precise, let $m\geq2$ and define  the spaces $X_m$ and $Y_m$ as follows: the space $X_m$ is the set  of those functions $f\in X$ with a Fourier expansion of the type
$$
f(w)=\sum_{n=1}^{\infty}a_{nm-1}\overline{w}^{nm-1},\quad w\in\mathbb{T}.
$$
equipped with the usual strong  topology of $C^{2-\alpha}(\mathbb{T})$. We define the ball of radius $r\in(0,1)$ by
$$
B_r^m=\Big\{f\in X_m, \,\|f\|_{C^{2-\alpha}(\mathbb{T})}\le r\Big\}.
$$
If $f\in B_r^m$ the expansion of the associated conformal mapping $\phi$
in $\{z : \vert z\vert \geq 1\}$ is given by
$$
\phi(z)=z+f(z)=z\Big(1+\sum_{n=1}^{\infty}\frac{a_{nm-1}}{z^{nm}}\Big).
$$
This will provide the $m-$fold symmetry of the associated patch $\phi(\mathbb{T})$, via the relation
\begin{equation}\label{mfo1}
\phi\big(e^{i2\pi/m}z\big)=e^{i2\pi/m}\phi(z),\quad\vert z\vert\geq1.
\end{equation}
The space $Y_m$ is the subspace of $Y$ consisting of those $g \in Y$ whose Fourier expansion is of the type
$$
g(w)=i\sum_{n=1}^{\infty}g_{nm}\big(w^{nm}-\overline{w}^{nm}\big),\quad w\in\mathbb{T}.
$$
To apply Crandall-Rabinowitz's Theorem and get the symmetry property stated  in \mbox{Theorem \ref{thmV2}}  it suffices to show the following result.
\begin{proposition}\label{prozq} The following assertions hold  true. Let $m\geq2$ and $r\in (0,1)$, then 
\begin{enumerate}
\item $F:\mathbb{R}\times B_r^m\to Y_m$ is well-defined.
\item The kernel of $\partial_fF(\Omega_m^\alpha,0)$ is one-dimensional and generated by $w\mapsto \overline{w}^{m-1}$.
\item The range of  $\partial_fF(\Omega_m^\alpha,0)$ is closed in $Y_m$ and is of co-dimension one.
\end{enumerate}
\end{proposition}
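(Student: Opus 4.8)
The strategy is to reduce all three assertions to the statements already proved on the full spaces $X$, $Y$ in Proposition \ref{propgh}, the only additional ingredient being invariance under the rotation $R_m:w\mapsto e^{2i\pi/m}w$. The useful preliminary observation is that $Y_m$ is exactly $\{g\in Y:\ g\circ R_m=g\}$: a mode $w^{nm}-\overline{w}^{nm}$ is fixed by $R_m$ since $e^{2\pi i n}=1$, and conversely an $R_m$-invariant element of $Y$ can only carry Fourier frequencies divisible by $m$. Likewise $B_r^m\subset B_r$, and by \eqref{mfo1} each $f\in B_r^m$ yields a conformal map $\phi=\textnormal{Id}+f$ satisfying $\phi\circ R_m=R_m\circ\phi$; differentiating gives $\phi'\circ R_m=\phi'$.

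For Claim (1), since $F(\Omega,f)\in Y$ is already known it is enough to check $F(\Omega,f)\circ R_m=F(\Omega,f)$, i.e. $G(\Omega,\phi)(e^{2i\pi/m}w)=G(\Omega,\phi)(w)$. When $G(\Omega,\phi)$ is evaluated at $e^{2i\pi/m}w$, the outer factor $\overline{w}\,\overline{\phi'(w)}$ acquires $e^{-2i\pi/m}$, while the inner bracket $\Omega\phi(w)-C_\alpha\fint_\mathbb{T}\phi'(\tau)|\phi(w)-\phi(\tau)|^{-\alpha}d\tau$ acquires $e^{2i\pi/m}$ — for $\Omega\phi$ directly by \eqref{mfo1}, and for the singular integral after the substitution $\tau\mapsto e^{2i\pi/m}\tau$, where $\phi\circ R_m=R_m\circ\phi$ and $\phi'\circ R_m=\phi'$ leave the denominator unchanged so that only $d\tau$ produces the factor $e^{2i\pi/m}$. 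The two factors cancel, so $G(\Omega,\phi)$ is $R_m$-invariant and $F(\Omega,f)\in Y_m$.

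Claims (2) and (3) follow by specializing the Fourier-multiplier formula \eqref{dfz}: for $h=\sum_{k\ge1}b_{km-1}\overline{w}^{km-1}\in X_m$ one has $b_0=0$ and
$$
\partial_fF(\Omega,0)h(w)=\frac i2\sum_{k\ge1}(km)\,b_{km-1}\big(\Omega-\Omega_{km}^\alpha\big)\big(w^{km}-\overline{w}^{km}\big)\in Y_m,
$$
so $\partial_fF(\Omega_m^\alpha,0)$ maps $X_m$ into $Y_m$. Since $n\mapsto\Omega_n^\alpha$ is strictly increasing (Lemma \ref{lemz1}), $\Omega_m^\alpha-\Omega_{km}^\alpha$ vanishes only at $k=1$; hence the kernel is one-dimensional, spanned by $\overline{w}^{m-1}$ (Claim (2)), and the range is contained in the closed codimension-one subspace of those $g\in Y_m$ whose $(w^m-\overline{w}^m)$-coefficient is zero. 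For the reverse inclusion, given such a $g=i\sum_{k\ge2}g_{km}(w^{km}-\overline{w}^{km})$ one sets $b_{m-1}=0$ and $b_{km-1}=2g_{km}\big(km(\Omega_m^\alpha-\Omega_{km}^\alpha)\big)^{-1}$ for $k\ge2$, and then repeats verbatim the regularity argument from the proof of Proposition \ref{propgh}(2) — using the asymptotics \eqref{As1}, the Abel-summation bound \eqref{kern1}, and the boundedness of the Szeg\"o projection on H\"older spaces — to obtain $h\in C^{2-\alpha}(\mathbb{T})$, hence $h\in X_m$; this gives Claim (3).

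I expect no essentially new obstacle: the analytic work has already been carried out in Propositions \ref{propgh} and \ref{string11} and in Lemma \ref{lemz1}. The only points needing care are the change-of-variables identity yielding the $R_m$-equivariance of $G$ in Claim (1) and the straightforward check that the $C^{2-\alpha}$ regularity argument for the preimage survives the restriction to the frequencies $km-1$, $k\ge2$.
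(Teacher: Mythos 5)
Your proposal is correct and takes essentially the same route as the paper: claim (1) is obtained from the $R_m$-equivariance of $G$ via the substitution $\tau\mapsto e^{2i\pi/m}\tau$ in the singular integral together with \eqref{mfo1} (the paper phrases this as the invariance of $\overline{w}\,S(\phi)$ plus the frequency support of $\phi'$ and $\phi(w)/w$), and claims (2)--(3) follow by restricting the diagonal formula \eqref{dfz} to the frequencies $km-1$ and identifying the range with the intersection of the range on $Y$ with $Y_m$. No gaps.
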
 
\begin{proof}

${\bf{(1)}}$ Let $f\in B_r^m$, we shall show that  $F(\Omega,f)=G(\Omega,\phi)\in Y_m$. Recall that the functional $G$ is defined by
$$
G(\Omega,\phi)(w)=\textnormal{Im}\bigg\{\Big(\Omega\,\overline{w}\,\phi(w)-\overline{w}\, S(\phi)(w)\Big)\,{\overline{\phi'(w)}}\bigg\},
$$
with
$$
S(\phi)(w)={C_\alpha}\mathop{{\fint}}_\mathbb{T}\frac{\phi'(\tau)}{\vert \phi(w)-\phi(\tau)\vert^\alpha}d\tau.
$$

It is easy to verify from \eqref{mfo1}  that the functions  $\phi'$ and $w\mapsto \frac{\phi(w)}{w}$ belong to the space $ C^{1-\alpha}(\mathbb{T})$ and  their   Fourier coefficients vanish at frequencies which are not integer multiples of $m$. Since this latter space is an algebra and stable by conjugation then the map $w\mapsto \textnormal{Im}\big\{\overline{\phi'(w)}\frac{\phi(w)}{w}\big\}$ belongs to the space $Y_m$.  Therefore, it remains to show that $w\mapsto \textnormal{Im}\big\{{\overline{\phi'(w)}}\big(\overline{w}S(\phi)(w)\big)\big\}\in Y_m$. This follows easily once  we have proved that $w\mapsto \overline{w}S(\phi)(w)$ satisfies \eqref{mfo1}.  For this end, set
\begin{eqnarray*} 
\Phi(w)&\triangleq& {\overline{w}}S(\phi)(w)\\
&=&{\overline{w}}\fint_{\mathbb{T}}\frac{\phi'(\tau)}{\vert \phi(w)-\phi(\tau)\vert^\alpha}d\tau.
\end{eqnarray*}
Then
\begin{eqnarray*}
\Phi(e^{i2\pi/m}w)={e^{-i2\pi/m}\overline{w}}\fint_{\mathbb{T}}\frac{\phi'(\tau)}{\vert \phi(e^{i2\pi/m}w)-\phi(\tau)\vert^\alpha}d\tau.
\end{eqnarray*}
By the change of variables $\tau=e^{i2\pi/m}\zeta$ and according to \eqref{mfo1}  we get for any $w\in\mathbb{T}$
\begin{eqnarray*}
\Phi(e^{i2\pi/m}w)&=&\frac{1}{w}\fint_{\mathbb{T}}\frac{\phi'(e^{i2\pi/m}\zeta)}{\vert \phi(e^{i2\pi/m}w)-\phi(e^{i2\pi/m}\zeta)\vert^\alpha}d\zeta\\ &=&\frac{1}{w}\fint_{\mathbb{T}}\frac{\phi'(\zeta)}{\vert \phi(w)-\phi(\zeta)\vert^\alpha}d\zeta\\
&=&\Phi(w).
\end{eqnarray*}
Hence, The Fourier coefficients of $\Phi$ vanish at frequencies which are not integer multiples of $m$ and this concludes the proof of the result,
$$
f\in B_r^m\Longrightarrow F(\Omega, f)=G(\Omega,\phi)\in Y_m.
$$

\vspace{0,2cm}

${\bf{(2)}}$
Since the generator $w \mapsto \overline{w}^{m-1}$ of the kernel of $\partial_fF(\Omega_m^\alpha,0)$ belongs to   $X_m$, we still have that the dimension of the kernel is $1$. 

\vspace{0,3cm}

${\bf{(3)}}$ Using Proposition \ref{propgh}, we deduce that
\begin{eqnarray*}
R\Big(\partial_f F (\Omega_m^\alpha, 0)\Big)&=&\Big\{g\in C^{1-\alpha}(\mathbb{T});\quad g(w)=i\sum_{n\geq1,n\neq m}^\infty g_n(w^{n}-\overline{w}^{n}), g_n\in \RR\Big\}\cap Y_m\\
&=&\Big\{g\in C^{1-\alpha}(\mathbb{T});\quad g(w)=i\sum_{n\geq2}^\infty g_{nm}(w^{nm}-\overline{w}^{nm}), g_{nm}\in \RR\Big\}.
\end{eqnarray*}
Obviously,  the range of $\partial_f F (\Omega_m, 0)$ is of co-dimension $1$ in the space $Y_m$.

Therefore we can apply Crandall-Rabinowitz’s Theorem to $X_m$ and $Y_m$ and 
obtain the existence of the $m-$fold symmetric patches for each integer $m \geq  2$. This achieves the proof of \mbox{Theorem \ref{thmV2}.}

\end{proof}

\section{Limiting case $\alpha=1$}\label{Limitc}
In this section we shall discuss the limiting case $\alpha=1$ corresponding to the SQG model. This case was excluded from Theorem \ref{thmV1} at least because   the rotating patch model seen in \eqref{model} does not work due to  the higher singularity of the kernel. Thus we shall modify a little bit this model as in \cite{C-F-M-R} and give an equation of the boundary of the V-states. Although the model seems to be coherent and satisfactory, it is  completely different from the sub-critical one $\alpha\in [0,1[$ and generates more technical difficulties in studying the rotating patches.  As we shall see later when we compute formally the linearized operator $\mathcal{L}_\Omega$ around the trivial solution we find that it behaves as a Fourier multiplier with an extra loss compared to the case $\alpha\in[0,1[$ which is of logarithmic type. Thus the property $\mathcal{L}_\Omega: C^{1+\varepsilon}\to C^{\varepsilon}$ fails and one should find other suitable spaces $X$ and $Y$ satisfying the assumptions of   C-R Theorem. We do  believe that such spaces must exist but certainly this would require more sophisticated analysis than what we shall do here. Among our objective is to    describe in details  the dispersion relation which tells us where the bifurcating curves emerge from the trivial one  and  also  shed light on the main difficulties encountered in this case.
\subsection{Rotating patch model} 
First recall from in the  equation \eqref{rotsq1}  one can change the velocity at the boundary by subtracting a tangential vector to the boundary without changing the full equation. Thus we shall work with the following modified velocity:
let $\gamma_0$ be a $2\pi$ periodic  parametrization of   the boundary of $D$, and define
\begin{equation}\label{veloc0}
u_0(\gamma_0(\sigma))=\frac{1}{2\pi}\int_0^{2\pi}\frac{\partial_s\gamma_0(s)-\partial_\sigma\gamma_0(\sigma)}{\vert \gamma_0(s)-\gamma_0(\sigma)\vert}ds.
\end{equation}
Then, by substituting the expression of the velocity in the equation of the boundary \eqref{rotsq1} one gets
\begin{equation}\label{model1}
\Omega\textnormal{Re}\big\{z\,\,\overline{z^\prime}\big\}=\textnormal{Im}\Big\{\frac{1}{2\pi}\int_{\partial D}\frac{\big(\zeta'-z'\big)}{\vert z-\zeta\vert^\alpha}\frac{d\zeta}{\zeta'}\,\,\overline{z^\prime}\Big\},\quad\forall z\in \partial D.
\end{equation}
Next, we parametrize the domain with the outside conformal mapping $\phi:\mathbb{D}^c\to {D}^c$,
\begin{equation}
\phi(z)=z+\sum_{n\geq 0}\frac{b_n}{z^n}
\end{equation}
by setting $z=\phi(w)$ and $\zeta=\phi(\tau)$. Then, we obtain the equation
\begin{equation}\label{modelq}
G(\Omega,\phi)(w)\triangleq\textnormal{Im}\Bigg\{\bigg(\Omega\phi(w)-\fint_\mathbb{T}\frac{\tau\phi'(\tau)-w\phi'(w)}{\vert \phi(w)-\phi'(\tau)\vert}\frac{d\tau}{\tau}\bigg)\frac{\overline{\phi^\prime(w)}}{w}\Bigg\}=0,\quad \forall w\in \mathbb{T},
\end{equation}
which is nothing but the boundary equation of the rotating patches. As for the sub-critical case we define,
$$
F(\Omega,f)(w)\triangleq G(\Omega,\textnormal{Id}+f)(w),\quad f(w)=\displaystyle{\sum_{n\geq 0}\frac{b_n}{w^n}},\,w\in \mathbb{T},\, b_n\in\RR.
$$
We point out that the Rankine vortices correspond to the trivial solutions $F(\Omega, 0)=0$. This can be checked as follows.
\begin{eqnarray*}
F(\Omega,0)(w)&=&\textnormal{Im}\bigg\{\Big(\Omega w-\fint_\mathbb{T}\frac{\tau-w}{\vert w-\tau\vert}\frac{d\tau}{\tau}\Big)\frac{1}{w}\bigg\}\\ &=& -\textnormal{Im}\bigg\{\frac{1}{w}\fint_\mathbb{T}\frac{\tau-w}{\vert w-\tau\vert}\frac{d\tau}{\tau}\bigg\}.
\end{eqnarray*}
In view of the  next identity \eqref{int1} applied with $n=1$ we conclude that
$$
F(\Omega,0)(w)=0,\quad \forall \Omega\in \RR,\, \forall w\in\mathbb{T}.
$$
We should mention in passing that we can get a similar result to the Proposition \ref{pro-el} and prove that the ellipses never rotate.
\subsection{Integral computations}
We shall discuss some elementary integrals that will appear later in the computations of the linearized operator.
\begin{lemma}\label{lem3}
Let $n\geq1$ and $w\in\mathbb{T}$, then we have
\begin{eqnarray}\label{int1}
\fint_\mathbb{T}\frac{{\tau}^{n}-{w}^n}{\vert w-\tau\vert}\frac{d\tau}{\tau} =-\frac{2{w}^{n}}{\pi}\sum_{k=0}^{n-1}\frac{1}{2k+1}\cdot
\end{eqnarray}
\begin{eqnarray}\label{int2}
\fint_\mathbb{T}\frac{(\tau-w)^2(\tau^n-w^n)}{\vert w-\tau\vert^3}\frac{d\tau}{\tau}=\frac{2w^{n+2}}{\pi}\sum_{k=1}^{n}\frac{1}{2k+1}\cdot
\end{eqnarray}
\end{lemma}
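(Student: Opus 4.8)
The plan is to reduce both identities to the single Fourier-analytic computation that was already carried out for the sub-critical case, namely the evaluation of $\int_0^\pi \sin^x(\eta)\,e^{iy\eta}\,d\eta$ from \eqref{lem00}, specialized to the exponent $x=-1$ (and its companion $x=-3$ for the second formula). First I would perform the usual normalizing change of variables $\tau=w\zeta$, so that each mean value over $\mathbb{T}$ against the kernel $|w-\tau|^{-1}$ becomes $w^{n}$ (respectively $w^{n+2}$) times a mean value over $\mathbb{T}$ against $|1-\zeta|^{-1}$; then write $\zeta=e^{i\eta}$, use $|1-e^{i\eta}|=2|\sin(\eta/2)|$ and the identity $(1-e^{i\eta}) = -e^{i\eta/2}\cdot 2i\sin(\eta/2)$ to turn the complex factors $\tau^n-w^n$ into exponentials, and finally substitute $\eta\mapsto 2\eta$ to land on integrals of the form $\frac1\pi\int_0^\pi \frac{e^{2ik\eta}}{\sin\eta}\,d\eta$.

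For \eqref{int1}: after these reductions the left side equals (up to the factor $w^n$) a finite sum of terms $\frac1{2\pi}\int_0^{2\pi}\frac{1-e^{in\eta}}{|1-e^{i\eta}|}\,e^{i\eta/?}\,d\eta$ which I would rewrite, using the telescoping identity $\frac{1-e^{in\eta}}{1-e^{i\eta}}=\sum_{j=0}^{n-1}e^{ij\eta}$ together with $|1-e^{i\eta}|^{-1} = \frac{1}{2|\sin(\eta/2)|}$, as a sum of elementary integrals $\int_0^\pi \frac{\cos((2j+1)\eta)}{\sin\eta}\,d\eta$ or rather their regularized/principal-value versions. The key input is the classical evaluation $\frac{1}{\pi}\int_0^\pi \frac{\sin((2k+1)\eta)}{\sin\eta}\,d\eta = 1$ for every integer $k\ge 0$ (equivalently, the $x=-1$ specialization of \eqref{lem00} read off as an analytic continuation, or the Dirichlet-kernel identity $\sum_{j=0}^{k}e^{2ij\eta}=\frac{\sin((2k+1)\eta)}{\sin\eta}e^{2ik\eta}$ integrated against $d\eta$). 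Summing the resulting $n$ equal contributions of $1$, weighted by the $\frac{1}{2k+1}$ that come out of the integration, reproduces $-\frac{2w^n}{\pi}\sum_{k=0}^{n-1}\frac1{2k+1}$; the sign and the factor $2/\pi$ are bookkeeping from the change $\eta\mapsto 2\eta$ and from $|1-e^{i\eta}|=2|\sin(\eta/2)|$.

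For \eqref{int2}: the same scheme applies but now the kernel is $|w-\tau|^{-3}$ and there is an extra factor $(\tau-w)^2$, so after the change of variables the relevant one-dimensional integrals have the shape $\frac1\pi\int_0^\pi \frac{(\cdot)}{\sin^3\eta}\,d\eta$ with trigonometric polynomials in the numerator; I would reduce the power in the denominator from $3$ to $1$ by an integration by parts in $\eta$ (the boundary terms vanish because the numerator has enough vanishing at $0$ and $\pi$, exactly as in the derivation of \eqref{Jn}), arriving again at integrals $\frac1\pi\int_0^\pi \frac{e^{2ik\eta}}{\sin\eta}\,d\eta$ whose values are known from the previous step. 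Collecting coefficients then yields $\frac{2w^{n+2}}{\pi}\sum_{k=1}^{n}\frac1{2k+1}$. The main obstacle I anticipate is purely organizational: keeping track of the regularization of the (non-absolutely-convergent) integrals near $\eta=0$ — one must interpret the mean values as principal values and justify the integration by parts in \eqref{int2} with the correct vanishing orders — and making sure the telescoping sums of the constants $1$ are indexed so that the shift from $\sum_{k=0}^{n-1}$ in \eqref{int1} to $\sum_{k=1}^{n}$ in \eqref{int2} comes out exactly right. Once the indexing is pinned down, both formulas follow from the single building block $\frac1\pi\int_0^\pi \frac{\sin((2k+1)\eta)}{\sin\eta}d\eta=1$.
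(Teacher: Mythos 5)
Your overall skeleton (normalize by $\tau=w\zeta$, pass to $\zeta=e^{i\eta}$, telescope $\frac{1-\zeta^n}{1-\zeta}=\sum_{j}\zeta^j$, reduce to elementary one-dimensional integrals) is the same as the paper's, but the computational core of your plan does not cohere as stated. You name as your ``single building block'' the Dirichlet-kernel identity $\frac1\pi\int_0^\pi \frac{\sin((2k+1)\eta)}{\sin\eta}\,d\eta = 1$, and then say the answer comes from ``summing the resulting $n$ equal contributions of $1$, weighted by the $\frac{1}{2k+1}$ that come out of the integration.'' These two statements are incompatible: if each term contributed $1$, the sum would be $n$, not $\sum_k\frac{1}{2k+1}$. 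The coefficients $\frac{1}{2k+1}$ in fact come from the \emph{other} classical identity, namely $\int_0^\pi e^{i(2k+1)\eta}\,d\eta=\frac{2i}{2k+1}$ (equivalently $\int_0^\pi\sin((2k+1)\eta)\,d\eta=\frac{2}{2k+1}$), which is what the paper uses: on $[0,\pi]$ one has $\vert 1-e^{i2\eta}\vert=i(1-e^{i2\eta})e^{-i\eta}$, so after telescoping $\frac{1-e^{i2n\eta}}{1-e^{i2\eta}}=\sum_{k=0}^{n-1}e^{2ik\eta}$ the modulus cancels \emph{algebraically} and one is left integrating the bounded trigonometric polynomial $\sum_k e^{i(2k+1)\eta}$. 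No Dirichlet kernel, no $\frac{1}{\sin\eta}$ denominators, and no regularization: the original integrands are bounded (the numerators vanish to the same order as the denominators at $\tau=w$), so the ``principal value'' bookkeeping you anticipate is a symptom of choosing the wrong decomposition, not a genuine feature of the problem. In particular your proposed specialization of \eqref{lem00} to $x=-1$ fails outright: $\Gamma(x+1)$ has a pole there, reflecting the fact that the individual integrals $\int_0^\pi e^{2ik\eta}/\sin\eta\,d\eta$ diverge at the endpoints (an endpoint singularity is not cured by a principal value), so only the telescoped combinations make sense.

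For \eqref{int2} your integration-by-parts reduction from $\sin^{-3}$ to $\sin^{-1}$ is also unnecessary and would force you to manipulate individually divergent expressions. The paper instead uses the pointwise identity $(1-\zeta)^2=-\zeta\vert1-\zeta\vert^2$ for $\zeta\in\mathbb{T}$ to cancel two powers of the modulus immediately, which reduces \eqref{int2} to exactly the same computation as \eqref{int1} with an index shift ($\sum_{k=0}^{n-1}\frac{1}{2k+3}=\sum_{k=1}^{n}\frac{1}{2k+1}$). If you reorder your steps so that the telescoping and the cancellation of the modulus happen \emph{before} any attempt to evaluate individual Fourier modes of $1/\sin\eta$, your argument becomes the paper's; as written, the identified key lemma is the wrong one and the source of the $\frac{1}{2k+1}$ coefficients is unaccounted for.
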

\begin{proof}
To prove \eqref{int1} we use successively the change of variables $\tau=w\zeta$ and $\zeta=e^{i\eta}$ 
\begin{eqnarray*}
\fint_\mathbb{T}\frac{{\tau}^{n}-{w}^n}{\vert w-\tau\vert}\frac{d\tau}{\tau} &=&{w}^{n}\fint_\mathbb{T}\frac{{\zeta}^{n}-1}{\vert 1-\zeta\vert}\frac{d\zeta}{\zeta}\\ &=&-{w}^{n}\frac{1}{2\pi}\int_0^{2\pi}\frac{1-e^{in\eta}}{\vert 1-e^{i\eta}\vert}d\eta\\ &=&-{w}^{n}\frac{1}{\pi}\int_0^{\pi}\frac{1-e^{i2n\eta}}{\vert 1-e^{i2\eta}\vert}d\eta.
\end{eqnarray*}
Using the identity  
$$\vert 1-e^{i2\eta}\vert=i(1-e^{i2\eta})e^{-i\eta},\quad \eta\in [0,\pi].
$$ we find easily 
\begin{eqnarray*}
\fint_\mathbb{T}\frac{{\tau}^{n}-{w}^n}{\vert w-\tau\vert}\frac{d\tau}{\tau}&=&-{w}^{n}\frac{1}{i\pi}\int_0^{\pi}\frac{1-e^{i2n\eta}}{1-e^{i2\eta}}e^{i\eta}d\eta\\ &=&-{w}^{n}\frac{1}{i\pi}\sum_{k=0}^{n-1}\int_0^{\pi}e^{i(2k+1)\eta}d\eta \\ &=&-{w}^{n}\frac{2}{\pi}\sum_{k=0}^{n-1}\frac{1}{2k+1}\cdot
\end{eqnarray*}
To compute the second integral we argue as before by using suitable change of variables,
\begin{eqnarray*}
\fint_\mathbb{T}\frac{(\tau-w)^2(\tau^n-w^n)}{\vert w-\tau\vert^3}\frac{d\tau}{\tau}&=&-w^{n+2}\fint_\mathbb{T}\frac{(\zeta-1)^2(1-\zeta^n)}{\vert 1-\zeta\vert^3}\frac{d\zeta}{\zeta}\\ &=&-w^{n+2}\fint_\mathbb{T}\frac{(1-\zeta)(1-\zeta^n)}{(1-\overline{\zeta})\vert 1-\zeta\vert}\frac{d\zeta}{\zeta}\\ &=&-w^{n+2}\frac{1}{2\pi}\int_0^{2\pi}\frac{(1-e^{i\eta})(1-e^{in\eta})}{(1-e^{-i\eta})\vert 1-e^{i\eta}\vert}d\eta.
\end{eqnarray*}
Thus we get
\begin{eqnarray*}
\fint_\mathbb{T}\frac{(\tau-w)^2(\tau^n-w^n)}{\vert w-\tau\vert^3}\frac{d\tau}{\tau} &=&-w^{n+2}\frac{i}{\pi}\int_0^{\pi}\frac{e^{i3\eta}(1-e^{i2n\eta})}{1-e^{i2\eta}}d\eta\\ &=&-w^{n+2}\frac{i}{\pi}\sum_{k=0}^{n-1}\int_0^{\pi}e^{i(2k+3)\eta}d\eta \\ &=&w^{n+2}\frac{2}{\pi}\sum_{k=0}^{n-1}\frac{1}{2k+3}\cdot
\end{eqnarray*}
This concludes the proof of the lemma.
\end{proof}

\subsection{Dispersion relation}
We shall now compute the G\^ateaux derivative of $F$ with respect to $f$  in the direction $h$, denoted as before by $\partial_fF(\Omega,f)h.$ Afterwards, we shall exhibit the dispersion set corresponding to the values of $\Omega$ where the kernel of the linearized operator around $f=0$ is non trivial.
Using \eqref{modelq}
\begin{eqnarray}\label{df0}
\partial_fF(\Omega,f)h(w)&=&\frac{d}{dt}F(\Omega,f+th)(w)_{\vert t=0}\notag\\ &=&
\textnormal{Im}\Bigg\{\Omega\Big(\phi(w)\overline{h'(w)}+\frac{\overline{\phi'(w)}}{w}h(w)\Big)\notag\\ &-&\frac{\overline{h'(w)}}{w}{\fint_{\mathbb{T}}}\frac{\tau\phi'(\tau)-w\phi'(w)}{\vert \phi(w)-\phi(\tau)\vert}\frac{d\tau}{\tau}-\frac{\overline{\phi'(w)}}{w}{\fint_{\mathbb{T}}}\frac{\tau h'(\tau)-wh'(w)}{\vert \phi(w)-\phi(\tau)\vert}\frac{d\tau}{\tau}\notag\\ &+& \frac{\overline{\phi'(w)}}{w}{\fint_{\mathbb{T}}}\frac{\big(\tau\phi'(\tau)-w\phi'(w)\big)\textnormal{Re}\Big\{{\big(\overline{h(\tau)}-\overline{h(w)}}\big)\big({\phi(\tau)-\phi(w)}\big)\Big\}}{\vert \phi(w)-\phi(\tau)\vert^3}\frac{d\tau}{\tau}\Bigg\}
\end{eqnarray} 
with the notation $\phi=\textnormal{Id}+f$.\\
\vspace{0,2cm}

In  the particular case $f=0$ one has
\begin{eqnarray}\label{df1}
\partial_fF(\Omega,0)h(w)&=&\frac{d}{dt}F(\Omega,th)(w)_{\vert t=0}\notag\\ &=&
\textnormal{Im}\bigg\{\Omega\big(\overline{h'(w)}+\overline{w}h(w)\big)-\fint_\mathbb{T}\frac{\tau h'(\tau)-wh'(w)}{w\vert w-\tau\vert}\frac{d\tau}{\tau}-\frac{\overline{h'(w)}}{w}\fint_\mathbb{T}\frac{\tau-w}{\vert w-\tau\vert}\frac{d\tau}{\tau}\notag\\ &+& \frac{1}{2w}\fint_\mathbb{T}\frac{h(\tau)-h(w)}{\vert w-\tau\vert}\frac{d\tau}{\tau}+ \frac{1}{2w}\fint_\mathbb{T}\frac{(\tau-w)^2\big(\overline{h(\tau)}-\overline{h(w)}\big)}{\vert w-\tau\vert^3}\frac{d\tau}{\tau}\bigg\}.
\end{eqnarray}
with $h(w)=\displaystyle{\sum_{n\geq 0}\frac{b_n}{w^n}}$ and $b_n\in\RR$ for all $n\geq 1$.

Our next goal is  to look for the values  $\Omega$ where the linearized operator fails to be injective.  We will be seeing that the function spaces that we shall use  differs from the ones of the case $\alpha\in(0,1).$ We will abandon the use of H\"older spaces which generate more technical difficulties. We introduce the spaces,

$$
B^s(\mathbb{T})=\Big\{f(w)=\sum_{n\geq 0}b_n\overline{w}^n, b_n\in \RR,\,\| f\|_{B^s}<\infty\Big\},\, \|f\|_{B^s}=|b_0|+\sum_{n\geq1} n^s|b_n|
$$
and
$$
B^s_{\textnormal{Log}}(\mathbb{T})=\Big\{ g(w)=i\sum_{n\geq 1}g_n\big(w^n-\overline{w}^n\big), g_n\in \RR,\,\| g\|_{B^s_{\textnormal{Log}}}<\infty\Big\},\, \|g\|_{B^s_{\textnormal{Log}}}=\sum_{n\geq1} \frac{n^s}{1+\ln n}|g_n|.
$$
  First we define the dispersion set
$$
\mathcal{S}_1=\Big\{\Omega= \frac{2}{\pi}\sum_{k=1}^{m-1}\frac{1}{2k+1}\quad m\geq 2\Big\}.
$$
 The main result of this section reads as follows.
\begin{proposition}\label{propgh0}
Let $s\geq1, \, m\geq2$.
\begin{enumerate}
\item For any $\Omega\in \mathbb{R},$ $\partial_fF(\Omega,0): B^s(\mathbb{T})\to B^{s-1}_{\textnormal{Log}}(\mathbb{T})$ is continuous.

\item
 The kernel  of $\partial_fF(\Omega,0)$ is non trivial if and only if \, $\Omega\in \mathcal{S}$
  and, in this case,  it is a one-dimensional vector space generated by
$$
v_m(w)=\overline{w}^{m-1}\quad\hbox{with}\quad \Omega=\Omega_m^1\triangleq \frac{2}{\pi}\sum_{k=1}^{m-1}\frac{1}{2k+1}\cdot
$$  
\item The range of  $\partial_fF(\Omega_m^1,0)$ is closed in $B^{s-1}_{\textnormal{Log}}(\mathbb{T})$ and is  of co-dimension one. It  is given by 
$$
R\big(\partial_fF(\Omega_m^1,0)\Big)=\Big\{g\in B^s_{\textnormal{Log}}(\mathbb{T}), \quad g(w)=i\sum_{n\geq1\atop\\ n\neq m}^\infty g_n(w^{n}-\overline{w}^{n}),\, g_n\in \RR\Big\}.
$$
\item Transversality assumption:   
$$
\partial_\Omega\partial_f F(\Omega_m^1,0)(v_m)\not\in R\big(\partial_fF(\Omega_m^1,0)\big).
$$

\end{enumerate}
\end{proposition}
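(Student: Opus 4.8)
The plan is to repeat, in the weighted spaces $B^s$, $B^{s-1}_{\textnormal{Log}}$, the same Crandall--Rabinowitz bookkeeping carried out in Proposition \ref{propgh} for the sub-critical model; the only genuinely new ingredient is the explicit diagonalization of $\partial_fF(\Omega,0)$ for the modified equation \eqref{modelq}. So the first and main step is to insert $h(w)=\sum_{n\geq0}b_n\overline{w}^n$ into \eqref{df1} and evaluate the five terms there with the help of Lemma \ref{lem3}. By \eqref{deriv-bar} one has $h'(w)=-\sum_{n\geq1}nb_n\overline{w}^{n+1}$, hence $\tau h'(\tau)-wh'(w)=-\sum_{n\geq1}nb_n(\overline{\tau}^n-\overline{w}^n)$, $h(\tau)-h(w)=\sum_{n\geq1}b_n(\overline{\tau}^n-\overline{w}^n)$ and $\overline{h(\tau)}-\overline{h(w)}=\sum_{n\geq1}b_n(\tau^n-w^n)$. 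After the change of variables $\tau=w\zeta$ each integral reduces to \eqref{int1} or \eqref{int2}; for the three terms carrying $|w-\tau|^{-1}$ one uses in addition that on $\mathbb{T}$ the substitution $\eta\mapsto-\eta$ gives $\fint_\mathbb{T}(\overline{\zeta}^{\,n}-1)|1-\zeta|^{-1}\zeta^{-1}d\zeta=\fint_\mathbb{T}(\zeta^n-1)|1-\zeta|^{-1}\zeta^{-1}d\zeta$. Collecting the coefficients of $w^{n+1}$ and of $\overline{w}^{n+1}$ and simplifying the harmonic sums via the elementary identity $\sum_{k=0}^{n-1}\frac1{2k+1}=\big(\sum_{k=1}^{n}\frac1{2k+1}\big)+1-\frac1{2n+1}$, one checks that everything recombines to
$$
\partial_fF(\Omega,0)h(w)=\frac{i\Omega b_0}{2}\big(w-\overline{w}\big)+\frac{i}{2}\sum_{n\geq1}(n+1)b_n\big(\Omega-\Omega_{n+1}^1\big)\big(w^{n+1}-\overline{w}^{n+1}\big),
$$
with $\Omega_n^1=\frac{2}{\pi}\sum_{k=1}^{n-1}\frac1{2k+1}$. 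This is the heart of the matter; the rest is soft.

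Granted this formula, assertion $(1)$ is immediate: $\partial_fF(\Omega,0)$ is a Fourier multiplier sending $\overline{w}^n$ essentially to $(n+1)(\Omega-\Omega_{n+1}^1)$ times $w^{n+1}-\overline{w}^{n+1}$, and since $\Omega_n^1\le\frac1\pi(1+\ln n)$ one has $|(n+1)(\Omega-\Omega_{n+1}^1)|\lesssim_\Omega(n+1)(1+\ln n)$; plugging this into the definitions of $\|\cdot\|_{B^s}$ and $\|\cdot\|_{B^{s-1}_{\textnormal{Log}}}$ (the denominator $1+\ln n$ in the target norm exactly absorbs the logarithmic factor) gives $\|\partial_fF(\Omega,0)h\|_{B^{s-1}_{\textnormal{Log}}}\lesssim_\Omega\|h\|_{B^s}$. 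For $(2)$, since every summand $\frac1{2k+1}$ is positive the sequence $n\mapsto\Omega_n^1$ is strictly increasing, so the multiplier $(n+1)(\Omega-\Omega_{n+1}^1)$ vanishes for at most one $n$, and it vanishes for some $n=m-1\ge1$ precisely when $\Omega=\Omega_m^1\in\mathcal{S}_1$; as $\Omega_m^1>0$ for $m\ge2$, the relation $\Omega b_0=0$ forces $b_0=0$, so the kernel is one-dimensional, generated by $v_m(w)=\overline{w}^{m-1}$.

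For $(3)$, the inclusion $R(\partial_fF(\Omega_m^1,0))\subset Z_m:=\{g\in B^{s-1}_{\textnormal{Log}}:\ g_m=0\}$ is read off the formula. Conversely, given $g=i\sum_{n\ne m}g_n(w^n-\overline{w}^n)\in Z_m$, the candidate preimage is $b_0=2g_1/\Omega_m^1$, $b_n=2g_{n+1}/\big((n+1)(\Omega_m^1-\Omega_{n+1}^1)\big)$ for $n\ne m-1$, and $b_{m-1}=0$; to see $h=\sum b_n\overline{w}^n\in B^s$ one uses the lower bound $|\Omega_m^1-\Omega_{n+1}^1|\gtrsim_m 1+\ln n$ valid for all $n\ne m-1$ (bounded below by the positive spectral gap $\min_{j\ne m}|\Omega_m^1-\Omega_j^1|$ on the finitely many small $n$, and comparable to $\frac1\pi\ln n$ for $n$ large), whence $n^s|b_n|\lesssim_m \frac{(n+1)^{s-1}}{1+\ln(n+1)}|g_{n+1}|$ and $\|h\|_{B^s}\lesssim_m\|g\|_{B^{s-1}_{\textnormal{Log}}}<\infty$. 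The identity $R(\partial_fF(\Omega_m^1,0))=Z_m$ makes it transparent that the range is closed and of codimension one. Finally for $(4)$, differentiating \eqref{df1} in $\Omega$ gives $\partial_\Omega\partial_fF(\Omega,0)h=\textnormal{Im}\{\overline{h'(w)}+\overline{w}h(w)\}$, and with $h=v_m$ one computes $\overline{h'(w)}+\overline{w}h(w)=-(m-1)w^m+\overline{w}^m$, hence
$$
\partial_\Omega\partial_fF(\Omega_m^1,0)v_m=\frac{im}{2}\big(w^m-\overline{w}^m\big),
$$
whose $m$-th coefficient is nonzero, so it does not lie in $Z_m=R(\partial_fF(\Omega_m^1,0))$.

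\textbf{Main obstacle.} The only nontrivial work is the first step: carrying the five singular integrals of \eqref{df1} down to the two model integrals of Lemma \ref{lem3} and then verifying the purely arithmetic cancellation that turns the combination of $2n$, $\sum_{k=0}^{n-1}\frac1{2k+1}$ and $\sum_{k=1}^{n}\frac1{2k+1}$ into the single quantity $(n+1)\Omega_{n+1}^1$. A secondary point, and the reason one cannot stay in Hölder classes, is the logarithmic divergence $\Omega_n^1\sim\frac1\pi\ln n$ (in contrast with $\Omega_n^\alpha\to\Theta_\alpha$ for $\alpha<1$): it is precisely this growth that makes $\partial_fF(\Omega,0):B^s\to B^{s-1}_{\textnormal{Log}}$ bounded while killing the map $C^{1+\varepsilon}\to C^{\varepsilon}$, and the same $\ln n$ in the denominator is what renders the operator surjective onto $Z_m$. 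Since $\partial_fF(\Omega,0)$ is diagonal, closedness of the range presents no difficulty beyond reading off the explicit formula.
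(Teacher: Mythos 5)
Your proposal is correct and follows essentially the same route as the paper: diagonalize $\partial_fF(\Omega,0)$ by inserting the Fourier expansion of $h$ into \eqref{df1}, reduce the five integrals to Lemma \ref{lem3} via the conjugation identity, verify the arithmetic recombination $\alpha_n+\beta_n=(n+1)\Omega_{n+1}^1$, and then read off continuity, the kernel, the range (with the same $\ln n$ lower bound on the spectral gap), and the transversality condition from the resulting Fourier-multiplier formula. The computations you outline, including $\partial_\Omega\partial_fF(\Omega_m^1,0)v_m=\frac{im}{2}(w^m-\overline{w}^m)$, match the paper's.
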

Before giving the proof of this result, we should  make few comments.
\begin{remark}
\begin{enumerate}
\item The dispersion relation was discovered  in \cite{A-H} by using another analytical approach based on Bessel functions. The  proof that we shall present is different and is somehow elementary.
\item The spaces $B^s(\mathbb{T})$ and $B^{s-1}_{\textnormal{Log}}(\mathbb{T})$ introduced above are well-adapted to the study of the linear operator but it  is not at all clear whether the nonlinear function $F$ sends $B^s(\mathbb{T})$ into $B^{s-1}_{\textnormal{Log}}(\mathbb{T})$ and satisfies the regularity properties required by C-R Theorem. If this is the case then one can show the existence of the V-states for the SQG equation.
 \end{enumerate}
\end{remark}
\begin{proof}
{$\bf(1)-(2)$. } We shall prove in the same time the two points. We start with replacing $h$ and $\overline{h'}$ in the identity \eqref{df1} by their Fourier expansions,
$$
h(w)=\sum_{n\geq 0}b_n\overline{w}^n\quad \textnormal{and}\quad \overline{h'(w)}=-\sum_{n\geq 0}nb_n{w}^{n+1}.
$$
Therefore  we get
\begin{eqnarray*}
\partial_fF(\Omega,0)h(w)&=&\textnormal{Im}\bigg\{\Omega\sum_{n\geq0}b_n\big(\overline{w}^{n+1}-n w^{n+1}\big)+\sum_{n\geq 1}nb_n\overline{w}\fint_\mathbb{T}\frac{\overline{\tau}^{n}-\overline{w}^n}{\vert w-\tau\vert}\frac{d\tau}{\tau}\\ &+&\sum_{n\geq 1}nb_n w^{n}\fint_\mathbb{T}\frac{\tau-w}{\vert w-\tau\vert}\frac{d\tau}{\tau}+\frac{1}{2}\sum_{n\geq 1}b_n\overline{w}\fint_\mathbb{T}\frac{\overline{\tau}^{n}-\overline{w}^n}{\vert w-\tau\vert}\frac{d\tau}{\tau}\\ &+&\frac{1}{2}\sum_{n\geq 1}b_n\overline{w}\fint_{\mathbb{T}}\frac{(w-\tau)^2(\tau^n-w^n)}{\vert w-\tau\vert}\frac{d\tau}{\tau}\bigg\}\cdot
\end{eqnarray*}
Note that
\begin{equation}\label{intbar}
\fint_\mathbb{T}\frac{\overline{\tau}^{n}-\overline{w}^n}{\vert w-\tau\vert}\frac{d\tau}{\tau}=\overline{\fint_\mathbb{T}\frac{\tau^n-w^n}{\vert w-\tau\vert}\frac{d\tau}{\tau}},
\end{equation}
and consequently we can rewrite in view of Lemma \ref{lem3} the linear operator as follows,
\begin{eqnarray}\label{Qs1}
\nonumber\partial_fF(\Omega,0)h(w)&=&\textnormal{Im}\bigg\{\Omega\sum_{n\geq0}b_n\big(\overline{w}^{n+1}-n w^{n+1}\big)-\frac{2}{\pi}\sum_{n\geq 1}nb_n\overline{w}^{n+1}\sum_{k=0}^{n-1}\frac{1}{2k+1}\\
\nonumber &-&\frac{2}{\pi}\sum_{n\geq 1}nb_n w^{n+1}-\frac{1}{\pi}\sum_{n\geq 0}b_n\overline{w}^{n+1}\sum_{k=0}^{n-1}\frac{1}{2k+1}+\frac{1}{\pi}\sum_{n\geq 0}b_nw^{n+1}\sum_{k=1}^{n}\frac{1}{2k+1}\bigg\}\\ 
\nonumber &=&\frac{b_0\Omega}{2} i(w-\overline{w})+
\textnormal{Im}\bigg\{\sum_{n\geq1} b_n\Big(\Omega -\alpha_n\Big)\overline{w}^{n+1}-\sum_{n\geq1} b_n\Big(n\Omega -\beta_n\Big)w^{n+1}\bigg\}
 \\ &=& \frac{b_0\Omega}{2} i(w-\overline{w})+\frac{1}{2i}\sum_{n\geq1}(n+1)\Big(\Omega-\frac{\alpha_n+\beta_n}{n+1}\Big)b_n\big(\overline{w}^{n+1}-w^{n+1}\big),
\end{eqnarray}
where
$$
\alpha_n\triangleq \frac{2n+1}{\pi}\sum_{k=0}^{n-1}\frac{1}{2k+1}
$$
and
$$
\beta_n\triangleq -\frac{2n}{\pi}+ \frac{1}{\pi}\sum_{k=1}^{n}\frac{1}{2k+1}\cdot
$$
It is plain to see that
\begin{eqnarray*}
\alpha_n+\beta_n&=&\frac{2n+1}{\pi}\Big(1-\frac{1}{2n+1}+\sum_{k=1}^{n}\frac{1}{2k+1}\Big)-\frac{2n}{\pi}+ \frac{1}{\pi}\sum_{k=1}^{n}\frac{1}{2k+1}\\ &=& \frac{2(n+1)}{\pi}\sum_{k=1}^{n}\frac{1}{2k+1}\\
&\triangleq&(n+1)\Omega_{n+1}^1. 
\end{eqnarray*}
Inserting this formula into \eqref{Qs1} we obtain
\begin{eqnarray}\label{Lin11}
\partial_fF(\Omega,0)h(w)&=&\frac{b_0\Omega}{2} i(w-\overline{w})+\frac{1}{2} i \sum_{n\geq1}(n+1)\Big(\Omega-\Omega_{n+1}^1\Big)b_n\big({w}^{n+1}-\overline{w}^{n+1}\big).
\end{eqnarray}
To check that $\partial_fF(\Omega,0): B^s(\mathbb{T})\to  B^{s-1}_{\textnormal{Log}}(\mathbb{T})$ is continuous we  write
\begin{eqnarray*}
\big\| \partial_fF(\Omega,0)h\big\|_{B^{s-1}_{\textnormal{Log}}}&=&\frac12|b_0\Omega|+\frac12\sum_{n\geq1}\frac{(1+n)^{s}}{1+\ln(1+n)}\big|\Omega-\Omega_{n+1}^1\big|\,|b_n|\\
&\le&\frac12|b_0\Omega|+C\sum_{n\geq1}\frac{n^{s}}{1+\ln n}\big|\Omega-\Omega_{n+1}^1\big|\,|b_n|\\
&\le& C\Omega\|h\|_{B^s}+C\sum_{n\geq1}\frac{n^{s}}{1+\ln n}\sum_{k=1}^{n}\frac{1}{2k+1}\,|b_n|.
\end{eqnarray*}
To estimate the last term we shall use the asymptotic behavior of the harmonic series
\begin{equation}\label{harmonic}
\sum_{k=1}^n\frac{1}{2k+1}=\frac12\ln n+\frac12\gamma+\ln 2-1+O(\frac1n)
\end{equation}
which yields,
$$
\big\| \partial_fF(\Omega,0)h\big\|_{B^{s-1}_{\textnormal{Log}}}\le C\|h\|_{B^s}.
$$
This concludes the continuity of the linear operator $\partial_fF(\Omega,0): B^s(\mathbb{T})\to B^{s-1}_{\textnormal{Log}}.$

Now we shall study the kernel of this operator. 
From the formulae \eqref{Lin11} we immediately deduce  that the kernel of $\partial_fF(\Omega,0)$ is non trivial if and only if there exists 
 $m\geq 2$ such that
\begin{eqnarray*}
\Omega=\Omega_{m}^1 = \frac{2}{\pi}\sum_{k=1}^{m-1}\frac{1}{2k+1}\cdot
\end{eqnarray*}
 In which case the kernel contains   the eigenfunction $w\mapsto \overline{w}^{m-1}$.  Moreover, it is one-dimensional vector space because 
the sequence $n\mapsto \Omega_n$ is strictly increasing and therefore the Fourier coefficients in \eqref{Lin11} satisfy
$$
(1+n)\Big(\Omega_{m}^1-\Omega_{n+1}^1\Big)\neq0,\quad \forall\, n\neq m-1.
$$
This achieves the proof of a simple kernel.

${\bf{(3)}}$ Denote by
$$
Z_m\triangleq \Big\{g\in B^s_{\textnormal{Log}}(\mathbb{T}), \quad g(w)=i\sum_{n\geq1\atop\\ n\neq m}^\infty g_n(w^{n}-\overline{w}^{n})\Big\}.
$$
Clearly $Z_m$ is a closed subspace of $ B^s_{\textnormal{Log}}(\mathbb{T})$  and   from \eqref{Lin11} we deduce the obvious  embedding $R(\partial_fF(\Omega_m,0))\subset Z_m$. Thus  it remains to check the converse, that is, for any $g\in Z_m$  there exists  $\displaystyle{w\mapsto h(w)=\sum_{n\geq0} b_n\overline{w}^n \in B^s(\mathbb{T})}$ such that  $\partial_fF(\Omega_m,0)h=g.$ In terms of Fourier coefficients this is equivalent to 
$$
b_0\Omega_m^1=2 g_0,\quad n\Big(\Omega_m^1-\Omega_n^1\Big)b_{n-1}=2g_n,\quad \forall n\geq2, 
$$
This defines only one sequence $(b_n)_{n\neq m-1}$ and the coefficient $b_{m-1}$ is free. To check the regularity of $h$ it suffices to prove that   
$$w\mapsto H(w)=\sum_{n\geq m} b_n \overline{w}^n
\in B^s(\mathbb{T}).
$$
According to the definition of the norm of $B^s$ and \eqref{harmonic} one gets
\begin{eqnarray*}
\|H\|_{B^s}&=&\sum_{n\geq m} n^s|b_n|\\
&=&2\sum_{n\geq m} n^s\frac{|g_{n+1}|}{(1+n)(\Omega_{n+1}^1-\Omega_{m}^1)}\\
&\le& C\sum_{n\geq m} \frac{n^{s-1}}{\ln n}|g_{n+1}|\\
&\le& \|g\|_{B^s_{\textnormal{Log}}}.
\end{eqnarray*}
This completes the proof of $Z_m=R(\partial_fF(\Omega_{m}^1,0))$.

${\bf(4)}$  We shall now check the transversality assumption

$$
\partial_\Omega\partial_f F(\Omega_m^1,0)(v_m)\not\in R\big(\partial_fF(\Omega_m^1,0)\Big).
$$
Differentiating \eqref{Lin11} one gets
$$
\partial_\Omega\partial_fF(\Omega,0)(h)(w)=\textnormal{Im}\Big\{\overline{h^\prime(w)}+\overline{w}{h(w)}\Big\}.
$$
Then
$$
\partial_\Omega\partial_fF(\Omega_m^1,0)(v_m)=i\frac{m}{2}\big(w^{m}-\overline{w}^{m}\big),
$$
which is not clearly in the subspace  $Z_m=\partial_fF(\Omega_m,0)$ as it  was claimed. The proof of Proposition \ref{propgh0} is now completed.

\end{proof}
\begin{gracies}
\textnormal{The second  author has been partially funded by the ANR  project Dyficolti ANR-13-BS01-0003-01.}
\end{gracies}

\end{document}